\newtheorem{definition}{Definition}[section]
\newtheorem{theorem}[definition]{Theorem}
\newtheorem{lemma}[definition]{Lemma}
\newtheorem{corollary}[definition]{Corollary}
\theoremstyle{remark}
\newtheorem{remark}[definition]{Remark}
\numberwithin{equation}{section}
\title{Gradient  potential estimates  in  elliptic obstacle problems with Orlicz growth}
\author{Qi Xiong \\
School of Mathematical Sciences \\
Nankai University, Tianjin, 300071, China\\
e-mail:\,2120170066@mail.nankai.edu.cn \\
Zhenqiu Zhang\footnote{Z.~Zhang is the corresponding author.}\\
School of Mathematical Sciences and LPMC\\
Nankai University, Tianjin, 300071, China\\
e-mail:\,zqzhang@nankai.edu.cn \\
Lingwei Ma \\
School of Mathematical Sciences \\
Tianjin Normal University, Tianjin, 300387,  China\\
e-mail:\,mlw1103@163.com \\}
\date{\today}
\begin{document}
\maketitle
\begin{abstract}
In this paper,we consider the solutions of the non-homogeneous  elliptic obstacle problems  with Orlicz growth involving  measure data. We first establish the pointwise estimates of  the approximable solutions to these problems via fractional maximal operators. As a result, we obtain pointwise and  oscillation estimates for the gradients of solutions  by the non-linear Wolff potentials, and these  yield results on $C^{1,\alpha}$-regularity of solutions.\\

 Mathematics Subject classification (2010): 35B45; 35R05; 35B65.

Keywords: Elliptic obstacle problem; Dini-BMO coefficients; Wolff potential; Restricted  fractional maximal function  
\end{abstract}
%\renewcommand{\thepage}{\roman{page}}
%\setcounter{page}{1}

%%\tableofcontents

\section{Introduction and main results}\label{section1}

\ \ \  In this paper, we consider  the non-homogeneous  elliptic obstacle problems with Orlicz growth and they are  related to  measure data problems of the type
\begin{equation}\label{1.1}
  \operatorname{div}\left({a}(x,Du)\right) =\mu \quad\quad\mbox{in}\ \ \ \Omega \\[0.05cm]
\end{equation}
where $ \Omega\subseteq \mathbb{R}^n, n\geqslant2 $ is a bounded open set and $\mu$ is a bounded Radon measure  on $\Omega$. Moreover we
assume that $\mu(\mathbb{R}^n \backslash \Omega)=0$ and $a=a(x,\eta): \Omega \times\mathbb{R}^n \rightarrow \mathbb{R}^n$ is measurable for each $x\in \Omega$ and differentiable for almost every $\eta \in \mathbb{R}^n$ and there exist constants $0<v\leqslant 1 \leqslant L<+\infty$ such that for all $x,\eta,\lambda \in \mathbb{R}^n$,
\begin{eqnarray}\label{a(x)1}
  \left\{\begin{array}{r@{}c@{}ll}
&&D_{\eta} a(x,\eta )\lambda \cdot \lambda \geqslant v\dfrac{g(|\eta|)}{|\eta|}|\lambda|^2 \,, \\[0.05cm]
&&|a(x,\eta)|+|\eta||D_{\eta} a(x,\eta )|\leqslant Lg(|\eta|)\,, \\[0.05cm]
  \end{array}\right.
\end{eqnarray}
where $D_{\eta}$ denotes the differentiation in $\eta$ and $g(t) : [0,+\infty)\rightarrow [0,+\infty)$ satisfies
\begin{eqnarray}\label{a(x)3}
  \left\{\begin{array}{r@{}c@{}ll}
&&g(0)=0 \ \ \ \Leftrightarrow \ \ \   t=0 \,, \\[0.05cm]
&&g(\cdot)\in C^{1}(\mathbb{R}^+)\,, \\[0.05cm]
&& 1\leq i_{g}=: \inf_{t>0}\frac{tg'(t)}{g(t)}\leq \sup_{t>0}\frac{tg'(t)}{g(t)}=:s_{g}<\infty. \, \\[0.05cm]
  \end{array}\right.
\end{eqnarray}
We define
\begin{equation}\label{g}
G(t):= \int_0^tg(\tau)\operatorname{d}\!\tau \ \ \ \mbox{for}\ \ t\geq0.
\end{equation}
It is straightforward to see  that  $G(t)$ is convex and  strictly  increasing. The standard example for $G(\cdot)$ is $$G(t)=\int_{0}^{t}(\mu+s^2)^{\frac{p-2}{2}} s ds$$ with $\mu\geqslant0$, then \eqref{a(x)1} is reduced to $p$-growth condition.

 The obstacle condition that we impose on the solution is  of the form $u \geq \psi$ a.e.  on $\Omega$, where $\psi \in W^{1,G}(\Omega)\cap W^{2,1}(\Omega)$ is a given function  and $G$ is defined as \eqref{g}. In the classical setting, we consider an inhomogeneity  $f \in L^{1}(\Omega)\cap (W^{1,G}(\Omega))'$, where $(W^{1,G}(\Omega))'$ is the dual of $W^{1,G}(\Omega)$, the obstacle problem can be formulated by the variational inequality
\begin{equation}\label{fjd}
\int_{\Omega} a(x,Du)\cdot D(v-u)dx \geq \int_{\Omega} f(v-u) dx
\end{equation}
for all functions $v \in u+W_0^{1,G}(\Omega)$ that  satisfy $v \geq \psi $ a.e. on $\Omega$. However, we  are more interested in solutions to obstacle problems with measure data in the sense that we want to replace the inhomogeneity $f$ by a bounded Randon measure $\mu$. And the solutions to the obstacle problems can be obtained by approximation with solutions to variational inequalities \eqref{fjd}.  The definition of approximable solutions is described precisely in Definition \ref{opdy}.

In fact, the nonlinear operator $a(\cdot,\cdot)$ is built upon the model case
\begin{equation*}
  \operatorname{div}\left(\omega(x)\frac{g(|Du|)}{|Du|}Du\right) =\mu \quad\quad\mbox{in}\ \ \ \Omega \\[0.05cm]
\end{equation*}
where $\omega :\Omega\rightarrow[c,+\infty) $ is a bounded measurable and separated from zero function, and $g$ satisfies \eqref{a(x)3}. This type of elliptic equations were first introduced by Lieberman \cite{l1} and moreover he proved $C^{\alpha}$- and $C^{1,\alpha}$-regularity of the solutions for these  elliptic equations in his paper. Since then there has been significant advances in regularity theory for this class of equations, we refer to these article \cite{bm20,cm16,cm17,cm15,l2,rt1}.

 In this work, we are interested in  the connections between regularity properties of the solutions and Wolff potentials of data $\mu$ and $\psi$. The Wolff potential was introduced by Maz'ya and Havin \cite{mh31} and the relevant fundamental contributions were attributed to Hedberg and Wolff \cite{hw32}.  The last years  have seen  important developments in nonlinear potential theory, with a deeper analysis of the interactions between fine properties of Sobolev functions, regularity theory of nonlinear elliptic equations and nonlinear potentials. The fundamental results due to  Kilpel$\ddot{a}$inen Mal$\acute{y}$ \cite{km5,km6}  are the pointwise estimates of solutions to the nonlinear equations of $p$-Laplace type via the   Wolff potentials. Later these results have been extended to a general setting by Trudinger and Wang \cite{tw7,tw8}  by means of  a different  approach. Further  results for the gradient of solutions have been achieved by  Duzaar, Kuusi and Mingione \cite{dm10,dm11,km12,m9}.
 Moreover, Scheven \cite{s26,s28} extended the aboved-mentioned results to elliptic obstacle problems with $p$-growth.  For more results, please see \cite{km13,km15,km14,mz1}.

As for the elliptic equations with Orlicz growth, Baroni \cite{b13}  obtained  pointwise   gradient estimates for solutions of  equations with constant coefficients  by the nonlinear potentials.  Later, these results were upgraded by Xiong and Zhang in \cite{xiong1} to elliptic obstacle problems with measure data. Our goal in this paper is to obtain the pointwise and oscillation estimates for the gradient of solutions  to  obstacle problems with Dini-$BMO$ coefficients. The idea of the proof goes back to Kuusi and  Mingione \cite{km12,m9}. We first derive an excess decay estimates for solutions of obstacle problems by using some comparison estimates. Then iterating the resulting estimates, we give the pointwise estimates of fractional maximal operators. Finally, these estimates allow to draw conclusions about pointwise and oscillation estimates for the gradients of solutions. 

Next, we summarize our main results. We begin by presenting some  definitions, notations and assumptions.
\begin{definition}
A function $B :[0,+\infty)\rightarrow[0,+\infty)$ is called a Young function if it is convex and $B(0)=0$.
\end{definition}
\begin{definition}
Assume that B is a Young function,  the Orlicz class $K^{B}(\Omega)$ is the set of all measurable functions $u : \Omega\rightarrow\mathbb{R}$ satisfying
\begin{equation*}
\int_\Omega B(|u|) \operatorname{d}\!\xi < \infty.\nonumber
\end{equation*}
The Orlicz space $L^{B}(\Omega)$ is the linear hull  of the Orlicz class  $K^{B}(\Omega)$ with the Luxemburg norm
\begin{equation*}
\Vert u \Vert_{L^B(\Omega)}:=\inf\left\lbrace \alpha>0: \ \ \int_{\Omega}B\left(\frac{|u|}{\alpha} \right) \operatorname{d}\!\xi \leqslant1\right\rbrace .
\end{equation*}
Furthermore, the Orlicz-Sobolev space $W^{1,B}(\Omega)$ is defined as
\begin{equation*}
W^{1,B}(\Omega)=\left\lbrace  u\in L^{B}(\Omega)\cap W^{1,1}(\Omega) \ \vert \ Du\in L^{B}(\Omega)\right\rbrace.\nonumber
\end{equation*}
Here, $D$ stands for gradient. The space $W^{1,B}(\Omega)$, equipped with the norm
$\Vert u \Vert_{W^{1,B}(\Omega)}:=\Vert u \Vert_{L^B(\Omega)}+\Vert Du \Vert_{L^B(\Omega)},$ is a Banach space. Clearly, $W^{1,B}(\Omega)=W^{1,p}(\Omega)$, the standard Sobolev space, if $B(t)=t^p$ with $p\geqslant1$.
\end{definition}
Note that for the Luxemburg norm there holds the inequality
\begin{equation*}
\Vert u \Vert_{L^B(\Omega)}\leqslant \int_{\Omega}B(|u|) \operatorname{d}\!\xi +1.
\end{equation*}

 The subspace $W_{0}^{1,B}(\Omega)$ is the closure of $C_{0}^{\infty}(\Omega)$ in $W^{1,B}(\Omega)$. The above properties about Orlicz space can be found in \cite{rr28}.

For every $k>0$ we let
\begin{equation*}
T_{k}(s):=
\left\{\begin{array}{r@{\ \ }c@{\ \ }ll}
s\ \ \ \ \ \ \ \ if\ \ |s|\leqslant k\,, \\[0.05cm]
k\ sgn(s)\ \ \ \ \ \ \ if\ \ |s|> k\,. \\[0.05cm]
\end{array}\right.
\end{equation*}

Moreover, for given Dirichlet boundary data $h\in W^{1,G}(\Omega)$, we define
$$\mathcal{T}^{1,G}_{h}:=\left\lbrace u: \Omega\rightarrow \mathbb{R} \ measurable: T_{k}(u-h)\in W_{0}^{1,G}(\Omega) \ \ for \ all \ k>0\right\rbrace. $$
Next  we introduce the definition of approximable solutions.

\begin{definition}\label{opdy}
Suppose that an obstacle function $\psi \in W^{1,G}(\Omega)$, measure data $\mu \in \mathcal{M}_{b}(\Omega)$ and boundary data $h \in W^{1,G}(\Omega)$ with $h\geq \psi$ a.e. are given. We say that $u \in \mathcal{T}^{1,G}_{h}(\Omega)$ with $u \geq    \psi$ a.e. on $\Omega$ is  a limit of approximating solutions of the obstacle problem $OP(\psi ; \mu)$ if there exist functions
$$f_{i} \in (W^{1,G}(\Omega))'\cap L^{1}(\Omega)\ \  with\ \  f_{i}\stackrel{\ast}\rightharpoonup \mu \ in \ \mathcal{M}_{b}(\Omega) \ \ as \ i\rightarrow+\infty$$
 satisfies
$$\limsup_{i\rightarrow+\infty}\int_{B_R(x_0)}|f_i|dx\leqslant|\mu|(\overline{B_R(x_0)}).$$
and solutions $u_{i}\in W^{1,G}(\Omega)$ with $u_{i}\geqslant \psi$ of the variational inequalities
\begin{equation}\label{opdy1}
\int_{\Omega}a(x,Du_{i})\cdot D(v-u_{i})dx\geqslant \int_{\Omega}f_{i}(v-u_{i})dx
\end{equation}
for $\forall \ v \in u_{i}+W_{0}^{1,G}(\Omega)$ with $v\geqslant \psi$ a.e. on $\Omega$, such that for $i\rightarrow +\infty$,
$$u_{i}\rightarrow u \ \ a.e. \ \ \  on \ \  \Omega$$
and $$u_{i}\rightarrow u \ \ \ in \ \ \ W^{1,1}(\Omega).$$
\end{definition}
For the inequalities \eqref{opdy1} with constant coefficients, the existence of approximating solutions converging in the sense of the above definition has been proved in our preceding work \cite{xiong1}. Then the existence in this paper can be obtained by minor adjustments.

Let us next  turn our attention to  the classical non-linear Wolff potential which is defined by
\begin{equation*}
W^{\mu}_{\beta,p}(x,R):=\int_0^R\left( \frac{|\mu|(B_{\rho}(x))}{\rho^{n-\beta p}}\right) ^{1/(p-1)}\frac{\operatorname{d}\!\rho}{\rho}
\end{equation*}
for parameters $\beta \in (0,n]$  and $p>1$.  We also abbreviate
\begin{equation*}
W^{[\psi]}_{\beta,p}(x,R):=\int_0^R\left( \frac{D\Psi(B_{\rho}(x))}{\rho^{n-\beta p}}\right) ^{1/(p-1)}\frac{\operatorname{d}\!\rho}{\rho}
\end{equation*}
with $D\Psi(B_{\rho}(x)):=\int_{B_{\rho}(x)}\left(\frac{g(|D\psi|)}{|D\psi|}|D^{2}\psi|+1\right)d\xi$.

  Now we recall the definitions of the centered  maximal operators as follows.
\begin{definition}
Let $ \beta\in[0,n], x\in \Omega$ and $ R<dist(x,\partial\Omega) $, and let $u$ be an $ L^1(\Omega) $-function or a measure with finite mass; the restricted fractional $ \beta $ maximal function of  $u$ is defined by
\begin{equation*}
M_{\beta,R}(u)(x):=\sup_{0<r\leqslant R}r^{\beta}\frac{|u|(B_r(x))}{|B_r(x)|}=\sup_{0<r\leqslant R}r^{\beta}\fint_{B_r(x)}|u|\operatorname{d}\!\xi.
\end{equation*}
\end{definition}
Note that when $ \beta=0 $ the one defined above is the classical Hardy-Littlewood maximal operator.

Moreover, we define
\begin{equation*}
\overline{M}_{\beta,R}(\psi)(x):=\sup_{0<r\leqslant R}r^{\beta}\frac{D \Psi(B_r(x))}{|B_r(x)|}=\sup_{0<r\leqslant R}r^{\beta}\fint_{B_r(x)}\left(\frac{g(|D\psi|)}{|D\psi|}|D^{2}\psi|+1\right)\operatorname{d}\!\xi.
\end{equation*}
\begin{definition}
Let $ \beta\in[0,n], x\in \Omega$ and $ R<dist(x,\partial\Omega) $, and let $u$ be an $ L^1(\Omega) $-function or a measure with finite mass; the restricted sharp  fractional $ \beta $ maximal function of  $u$ is defined by
\begin{equation*}
M^{\#}_{\beta,R}(u)(x):=\sup_{0<r\leqslant R}r^{-\beta}\fint_{B_r(x)}|u-(u)_{B_r(x)}|\operatorname{d}\!\xi.
\end{equation*}
\end{definition}
When $ \beta=0 $ the one defined above is the Fefferman-Stein sharp maximal operator.

Throughout this paper we write
$$\theta(a,B_{r}(x_0))(x):=\sup_{\eta \in \mathbb{R}^n\setminus \left\lbrace0\right\rbrace  }\frac{|a(x,\eta)-\overline{a}_{B_{r}(x_0)}(\eta)|}{g(|\eta|)}, $$
where $$\overline{a}_{B_{r}(x_0)}(\eta):=\fint_{B_{r}(x_0)}a(x,\eta)dx.$$
Then we can easily check from \eqref{a(x)1} that $|\theta(a,B_{r}(x_0))|\leqslant2L$. In addition, we assume that $a(x,\eta)$  satisfies the Dini-$BMO$ regularity. More precisely,
\begin{definition}
We say that  $a(x,\eta)$ is ($\delta$, R)-vanishing for some $\delta, R>0$,  if
\begin{equation}\label{a(x)2}
\omega(R):=\sup_{{\substack{ x_{0}\,\in\,\Omega\\0<r\leq R}}} \left( \fint_{B_{r}(x_{0})}\theta(a,B_{r}(x_{0}))^{\gamma'}\operatorname{d}\!x\right) ^{\frac{1}{\gamma'}} \leq\delta,
\end{equation}
where $\gamma'=\frac{\gamma}{\gamma-1}$, $\gamma$ is as in Lemma \ref{nibu}.
\end{definition}

Finally we state our main results of this paper. The first result is  the following Theorem that shows some pointwise estimates of  the approximable solutions to of the non-homogeneous quasilinear elliptic obstacle problems involving measure data via fractional maximal  operators.
\begin{theorem}\label{Th1}
Under the assumption  \eqref{a(x)1},  \eqref{a(x)3} and   \eqref{a(x)2}, let $\psi \in W^{1,G}(\Omega)\cap W^{2,1}(\Omega)$, $\frac{g(|D\psi|)}{|D\psi|}|D^{2}\psi|$ $ \in L^{1}_{loc}(\Omega)$. Assume that $u \in W^{1,1}(\Omega)$ with $u\geqslant \psi$ a.e. is a limit of approximating solutions to $OP(\psi; u)$ with measure data $\mu \in \mathcal{M}_{b}(\Omega)$(in the sense of Definition \ref{opdy}),  and
\begin{equation}\label{dytj}
\sup_{r>0}\int_{0}^{r}[\omega(\rho)]^{\frac{1}{1+s_g}}\frac{d\rho}{\rho}< +\infty,
\end{equation}
then  there exists a constant $c=c(n,i_g,s_g,v,L) $ and a radius $R_{0}>0$, depending on $n,i_g,s_g,v,L,\omega(\cdot)$,  such that
\begin{eqnarray}\label{1.11}
\nonumber &&M_{\alpha,R}^{\#}(u)(x)+M_{1-\alpha,R}(Du)(x)\\ \nonumber
&\leqslant& c\left[ R^{1-\alpha}\fint_{B_R(x)}|Du|d\xi+W_{1-\alpha+\frac{\alpha}{i_g+1},i_g+1}^{\mu}(x,2R)+W_{1-\alpha+\frac{\alpha}{i_g+1},i_g+1}^{[\psi]}(x,2R)\right]  \\
&+&c\int_{0}^{2R}[\omega(\rho)]^{\frac{1}{1+s_g}}G^{-1}\left[\fint_{B_{\rho}(x)}[G(|D\psi|)+G(|\psi|)]d\xi \right]\frac{d\rho}{\rho^{\alpha}}.
\end{eqnarray}
and further assume that
\begin{equation}\label{wtiaojian}
\sup_{r>0}\frac{[\omega(r)]^{\frac{1}{1+s_g}}}{r^{\widehat{\alpha}}}\leqslant c_0,
\end{equation}
for some $\widehat{\alpha}\in[0,\beta)$, then
\begin{eqnarray}\label{1.122}
\nonumber M_{\alpha,R}^{\#}(Du)(x)&\leqslant& c\left\lbrace R^{-\alpha}\fint_{B_R(x)}|Du|\operatorname{d}\! \xi+ \left[  M_{1-\alpha i_g,R}(\mu)(x)\right] ^{\frac{1}{i_g}}+\left[  \overline{M}_{1-\alpha i_g,R}(\psi)(x)\right] ^{\frac{1}{i_g}}\right\rbrace  \\ \nonumber
&+&c\left[  W_{\frac{1}{i_g+1},i_g+1}^{\mu}(x,2R)+W_{ \frac{1}{i_g+1},i_g+1}^{[\psi]}(x,2R)\right]   \\
&+&c\int_{0}^{2R}[\omega(\rho)]^{\frac{1}{1+s_g}}G^{-1}\left[\fint_{B_{\rho}(x)}[G(|D\psi|)+G(|\psi|)]d\xi \right]\frac{d\rho}{\rho^{1+\alpha}}
\end{eqnarray}
holds uniformly in $\alpha\in[0,\widehat{\alpha}]$,
where $c=c(n,i_g,s_g,v,L,\widehat{\alpha},c_0,\omega(\cdot),diam(\Omega))$, $0<R\leqslant \min \left\lbrace R_0,dist(x_0,\partial \Omega)\right\rbrace $ and $\beta$ is as in Lemma \ref{lemma1.6}.
\end{theorem}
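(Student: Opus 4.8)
The plan is to follow the Kuusi--Mingione strategy of deriving a pointwise decay estimate for a suitable excess functional of $Du$ on dyadic balls, then summing the geometric series. The natural excess here is the mean oscillation $\fint_{B_\rho}|Du-(Du)_{B_\rho}|\,d\xi$, and the heart of the argument is a \emph{decay alternative}: on each ball $B_\rho(x)$ we compare $u$ with the solution $w$ of the corresponding obstacle problem with frozen coefficients $\overline a_{B_\rho}$ and zero right-hand side; then compare $w$ with the solution $v$ of the homogeneous equation $\operatorname{div}\bigl(\overline a_{B_\rho}(Dv)\bigr)=0$ without obstacle. For $v$ one has the classical $C^{1,\alpha}$ excess-decay of Lieberman type (this is essentially Lemma~\ref{lemma1.6}, whence the exponent $\beta$), so $\fint_{B_{\sigma\rho}}|Dv-(Dv)_{B_{\sigma\rho}}|\le C\sigma^{\beta}\fint_{B_\rho}|Dv-(Dv)_{B_\rho}|$. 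The comparison errors $u\to w$ and $w\to v$ are controlled, respectively, by the obstacle data term $D\Psi(B_\rho(x))$ and the measure term $|\mu|(B_\rho(x))$ through the nonlinear (Orlicz-adapted) comparison estimates already available in the paper, and by the coefficient oscillation $\omega(\rho)$ through the $\theta(a,B_\rho)$ bound; the $G$-nonlinearity forces these errors to appear via $G^{-1}$ and fractional powers $1/i_g$, $1/(i_g+1)$, exactly as in the statement.

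The first estimate \eqref{1.122} that I would actually prove is the non-differentiated one, i.e.\ the bound on $M^{\#}_{\alpha,R}(u)+M_{1-\alpha,R}(Du)$ in the previous theorem; in fact, by the hypothesis of the present statement we may take \eqref{1.11} as already established. So I would start from the scaled excess
\[
E(\rho):=\rho^{-\alpha}\fint_{B_\rho(x)}\bigl|Du-(Du)_{B_\rho(x)}\bigr|\,d\xi ,
\]
and show, using the three-step comparison above together with \eqref{a(x)2}, that for $\sigma\in(0,1)$ small and fixed,
\[
E(\sigma\rho)\ \le\ C\sigma^{\beta-\alpha}E(\rho)\ +\ C\,\rho^{-\alpha}\Bigl(\tfrac{|\mu|(B_\rho)}{\rho^{n-1}}\Bigr)^{1/i_g}\!+C\,\rho^{-\alpha}\Bigl(\tfrac{D\Psi(B_\rho)}{\rho^{n-1}}\Bigr)^{1/i_g}\!+C\,[\omega(\rho)]^{\frac{1}{1+s_g}}\rho^{-\alpha}\,G^{-1}\!\Bigl[\fint_{B_\rho}\!\bigl(G(|D\psi|)+G(|\psi|)\bigr)d\xi\Bigr],
\]
possibly with an extra tail term of the form $\rho^{-\alpha}W^{\mu}_{\frac1{i_g+1},i_g+1}$ absorbed along the way (this is where one needs to be careful: the $G$-growth makes the comparison lemma produce a Wolff-type quantity rather than a single dyadic term, so one records the full potential directly). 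Because $\alpha\le\widehat\alpha<\beta$, the factor $\sigma^{\beta-\alpha}<1$, so iterating over $\rho=2^{-k}R$ and summing yields control of $\sup_{0<r\le R}E(r)=M^{\#}_{\alpha,R}(Du)(x)$ by the initial term $R^{-\alpha}\fint_{B_R}|Du|$ plus the three sums, which are identified with $W^{\mu}_{\frac1{i_g+1},i_g+1}(x,2R)$, $W^{[\psi]}_{\frac1{i_g+1},i_g+1}(x,2R)$, the maximal-function terms $[M_{1-\alpha i_g,R}(\mu)(x)]^{1/i_g}$, $[\overline M_{1-\alpha i_g,R}(\psi)(x)]^{1/i_g}$, and the Dini integral. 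The hypothesis \eqref{dytj} guarantees the coefficient series converges; \eqref{wtiaojian} with $\widehat\alpha<\beta$ is precisely what makes the iteration uniform in $\alpha\in[0,\widehat\alpha]$ (it prevents the $[\omega(\rho)]^{1/(1+s_g)}\rho^{-\alpha}$ factors from blowing up faster than the geometric gain $\sigma^{\beta-\alpha}$), and $R_0$ is chosen small enough that $\omega(R_0)$ is below the smallness threshold needed in the comparison step.

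The main obstacle, and where most of the work lies, is the comparison chain under Orlicz growth: keeping track of how $G$ and its inverse interact with averaging, Young's inequality, and the structure conditions \eqref{a(x)1}, so that the error terms come out in the dimensionally correct form with exponents $1/i_g$ and $1/(i_g+1)$ and with the sharp power of $\omega(\rho)$, namely $[\omega(\rho)]^{1/(1+s_g)}$. A secondary technical point is the handling of the obstacle: one must pass from the obstacle problem with $\psi$ to an equation by absorbing $\operatorname{div}(\overline a(D\psi))$-type terms, which is why the quantity $D\Psi(B_\rho)=\int_{B_\rho}\bigl(\tfrac{g(|D\psi|)}{|D\psi|}|D^2\psi|+1\bigr)d\xi$ and the extra Dini term involving $G(|D\psi|)+G(|\psi|)$ appear; these have already been dealt with for the constant-coefficient case in \cite{xiong1}, so I would quote those comparison estimates and only insert the coefficient-oscillation perturbation. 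Once the decay estimate is in hand, the summation and the identification of the sums with Wolff potentials and restricted fractional maximal functions are routine, following \cite{km12,m9}.
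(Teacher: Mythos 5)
Your outline of the comparison chain and the overall Kuusi--Mingione iteration scheme is correct, and you correctly identify the roles of hypotheses \eqref{dytj} and \eqref{wtiaojian} (the latter making the iteration uniform for $\alpha\in[0,\widehat\alpha]$, $\widehat\alpha<\beta$). However, the decay inequality you write for $E(\rho)$ is missing a crucial term, and that omission hides the genuinely nontrivial part of the proof.

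The comparison estimate with the frozen-coefficient problem (Lemma~\ref{diyi}, fed into Lemma~\ref{zongjie}) does \emph{not} produce only a data-controlled error of the form
$[\omega(\rho)]^{\frac{1}{1+s_g}}G^{-1}\bigl[\fint_{B_\rho}(G(|D\psi|)+G(|\psi|))\bigr]$.
It also produces $[\omega(\rho)]^{\frac{1}{1+s_g}}\fint_{B_\rho}|Du|$, i.e.\ the coefficient oscillation feeds the \emph{full} gradient average, not its oscillation, back into the estimate. With this term present your decay relation is no longer a closed recursion in $E(\rho)$: after telescoping you pick up $|(Du)_{B_\rho}|$, which is not controlled by the sum of the oscillations alone. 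This is precisely why the paper splits $\fint_{B_i}|Du|\leq A_i+k_i+|S|$, records the telescoping identity $k_{m+1}\leq H^n\sum_{i\le m}A_i+k_0$, and then runs an \emph{induction} to establish $R_{m+1}^{1-\alpha}k_{m+1}\leq(c+c^*)M$; the smallness needed for the induction to close is exactly what fixes the radius $R_0$ via \eqref{dytj}, $\int_0^{2R_0}[\omega(\rho)]^{1/(1+s_g)}\,d\rho/\rho\leq \frac{1}{2(c+c^*)c}$. In your account \eqref{dytj} only ``guarantees the coefficient series converges,'' but in fact its quantitative smallness is what makes the absorption work. The corresponding issue in the proof of \eqref{1.122} is resolved differently: there the paper bounds $\fint_{B_i}|Du|$ by applying the already-proved estimate \eqref{1.11} with $\alpha=1$, giving the quantity $M^*$ that enters the right-hand side of \eqref{1.122}. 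You gesture at using \eqref{1.11} inside the proof of \eqref{1.122}, which is the right instinct, but since you also drop the $\fint|Du|$ term from the decay estimate, it is unclear you realize \emph{why} that auxiliary bound is needed.

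A secondary confusion: you describe \eqref{1.11} as coming from ``the previous theorem'' and suggest it ``may be taken as already established.'' Both \eqref{1.11} and \eqref{1.122} are conclusions of the present Theorem~\ref{Th1}; \eqref{1.11} must be proved first (it requires the bootstrap described above), and only then may it be invoked, with $\alpha=1$, inside the proof of \eqref{1.122}.
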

Thanks to Theorem \ref{Th1}, we  derive  pointwise and  oscillation estimates for the gradients of solutions to obstacle problems.
\begin{theorem}\label{Th2}
In the same hypothesis of Theorem \ref{Th1}, let $B_{4R}(x_0)\subseteq \Omega, x,y \in B_{\frac{R}{4}}(x_0), 0<R\leqslant\frac{1}{2},$ and for some $\widehat{\alpha}\in[0,\beta)$, assume that
\begin{equation}
c_0:=\sup_{r>0}\int_{0}^{r}\frac{[\omega(\rho)]^{\frac{1}{1+s_g}}}{\rho^{\widehat{\alpha}}}\frac{d\rho}{\rho}<+\infty.
\end{equation}
Then  there exists a constant $c=c(n,i_g,s_g,v, L, c_0,  \widehat{\alpha},\omega(\cdot),diam(\Omega)) $ such that
\begin{eqnarray} \label{du}
\nonumber |Du(x_0)|  &\leqslant& c\left[ \fint_{B_R(x_0)}|Du|d\xi+W_{\frac{1}{i_g+1},i_g+1}^{\mu}(x_0,2R)+W_{ \frac{1}{i_g+1},i_g+1}^{[\psi]}(x_0,2R)\right] \\
&+&c\int_{0}^{2R}[\omega(\rho)]^{\frac{1}{1+s_g}}G^{-1}\left[\fint_{B_{\rho}(x_0)}[G(|D\psi|)+G(|\psi|)]d\xi \right]\frac{d\rho}{\rho}.
\end{eqnarray}
\begin{eqnarray}\label{du-du} \nonumber
&&\vert Du(x)-Du(y) \vert \\ \nonumber
&\leq &c\fint_{B_R(x_0)}\vert Du\vert\operatorname{d}\!\xi\left(\frac{|x-y|}{R} \right) ^{\alpha}  \\ \nonumber
&+& c \left[  W^{\mu}_{-\alpha+\frac{1+\alpha}{1+i_g},i_g+1}(x,2R)+ W^{[\psi]}_{-\alpha+\frac{1+\alpha}{1+i_g},i_g+1}(x,2R)\right]|x-y|^{\alpha} \\ \nonumber
&+&c \left[  W^{\mu}_{-\alpha+\frac{1+\alpha}{1+i_g},i_g+1}(y,2R)+ W^{[\psi]}_{-\alpha+\frac{1+\alpha}{1+i_g},i_g+1}(y,2R)\right]|x-y|^{\alpha} \\ \nonumber
&+&c \left[ \int_{0}^{2R}[\omega(\rho)]^{\frac{1}{1+s_g}}G^{-1}\left[\fint_{B_{\rho}(x)}[G(|D\psi|)+G(|\psi|)]d\xi \right]\frac{d\rho}{\rho^{1+\alpha}} \right]|x-y|^{\alpha} \\
&+&c \left[  \int_{0}^{2R}[\omega(\rho)]^{\frac{1}{1+s_g}}G^{-1}\left[\fint_{B_{\rho}(y)}[G(|D\psi|)+G(|\psi|)]d\xi \right]\frac{d\rho}{\rho^{1+\alpha}}\right]|x-y|^{\alpha},
\end{eqnarray}
holds uniformly in $\alpha\in[0,\widehat{\alpha}]$, where  $\beta$ is as in Lemma \ref{lemma1.6}  and $x,y$ is the Lebesgue's point of $Du$.
\end{theorem}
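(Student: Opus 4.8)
The plan is to deduce Theorem \ref{Th2} from Theorem \ref{Th1} by exploiting the relation between the sharp fractional maximal operator $M^{\#}_{\alpha,R}(Du)$ and pointwise / oscillation control of $Du$ at Lebesgue points. First I would establish \eqref{du}: pick $x_0$ a Lebesgue point of $Du$ and apply \eqref{1.122} with $\alpha = 0$, so that $M^{\#}_{0,R}(Du)(x_0)$ bounds a telescoping sum $\sum_j \abs{(Du)_{B_{2^{-j}R}(x_0)} - (Du)_{B_{2^{-j-1}R}(x_0)}}$; since the averages converge to $Du(x_0)$ at a Lebesgue point, one gets $\abs{Du(x_0) - (Du)_{B_R(x_0)}} \lesssim M^{\#}_{0,R}(Du)(x_0)$, and then $\abs{(Du)_{B_R(x_0)}} \le \fint_{B_R(x_0)}\abs{Du}\,d\xi$ handles the remaining term. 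The extra maximal-function terms $[M_{1,R}(\mu)(x_0)]^{1/i_g}$, $[\overline M_{1,R}(\psi)(x_0)]^{1/i_g}$ appearing in \eqref{1.122} must be absorbed into the Wolff potentials: here I would use the standard pointwise bound $M_{1-\alpha i_g, R}(\mu)(x)^{1/i_g} \lesssim W^{\mu}_{\frac{1}{i_g+1},i_g+1}(x,2R)$ type inequality (comparing $\sup_r r^{\beta}\frac{\abs{\mu}(B_r)}{\abs{B_r}}$ with the integral $\int_0^{2R}(\cdots)^{1/i_g}\frac{d\rho}{\rho}$ via monotonicity of the integrand on a dyadic scale), and likewise for $\overline M$ and $D\Psi$; this folds the $\alpha=0$ version of \eqref{1.122} into exactly the right-hand side of \eqref{du}.

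Next, for the oscillation estimate \eqref{du-du}, I would distinguish the two standard regimes. If $\abs{x-y} \ge R/8$ (say), then $\abs{x-y}/R \simeq 1$ and \eqref{du-du} follows by applying \eqref{du} (with appropriate centers and a comparable radius) to $Du(x)$ and $Du(y)$ separately and using the triangle inequality — the $(\abs{x-y}/R)^\alpha$ factor is harmless since it is bounded below. The substantive case is $\abs{x-y} < R/8$: set $\rho = \abs{x-y}$ and $B = B_{2\rho}(x) \ni y$. Using that $x,y$ are Lebesgue points of $Du$, write
\[
\abs{Du(x) - Du(y)} \le \abs{Du(x) - (Du)_{B_{2\rho}(x)}} + \abs{Du(y) - (Du)_{B_{2\rho}(x)}},
\]
and bound the first term by a telescoping chain down from $B_{2\rho}(x)$ controlled by $\sup_{r\le 2\rho} r^{-\alpha}\fint_{B_r(x)}\abs{Du - (Du)_{B_r(x)}}\,d\xi \cdot (2\rho)^\alpha \le M^{\#}_{\alpha, 2\rho}(Du)(x)\,(2\rho)^\alpha$, and similarly the second term using that $B_{2\rho}(y)\subseteq B_{4\rho}(x)$ so that $\abs{(Du)_{B_{2\rho}(x)} - (Du)_{B_{4\rho}(x)}}$ and $\abs{(Du)_{B_{2\rho}(y)} - (Du)_{B_{4\rho}(x)}}$ are each $\lesssim M^{\#}_{\alpha, 4\rho}(Du)(x)\,\rho^\alpha$. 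Then invoke \eqref{1.122} at both $x$ and $y$ (valid uniformly for $\alpha \in [0,\widehat\alpha]$ by hypothesis) to replace $M^{\#}_{\alpha,2\rho}(Du)$ by the right-hand side of \eqref{1.122}; the leading term $R^{-\alpha}\fint_{B_R}\abs{Du}$ produces exactly $\fint_{B_R(x_0)}\abs{Du}\,d\xi\,(\abs{x-y}/R)^\alpha$ after reinstating the $\rho^\alpha = \abs{x-y}^\alpha$ factor, and the maximal-function terms $[M_{1-\alpha i_g,\cdot}(\mu)]^{1/i_g}$, $[\overline M_{1-\alpha i_g,\cdot}(\psi)]^{1/i_g}$ are again absorbed into the shifted Wolff potentials $W^{\mu}_{-\alpha + \frac{1+\alpha}{1+i_g}, i_g+1}(\cdot,2R)$ and $W^{[\psi]}_{-\alpha + \frac{1+\alpha}{1+i_g}, i_g+1}(\cdot,2R)$ via the same monotone-integrand comparison, noting that $1 - \alpha i_g - (i_g+1)\big(-\alpha + \tfrac{1+\alpha}{1+i_g}\big) = 0$ makes the scaling exponents match. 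The Dini-type tail $\int_0^{2\rho}[\omega(\rho')]^{1/(1+s_g)}G^{-1}[\cdots]\frac{d\rho'}{(\rho')^{1+\alpha}}$ is dominated by its counterpart over $(0,2R)$ times $\rho^\alpha$ — more precisely one uses $\int_0^{2\rho}(\cdots)\frac{d\rho'}{(\rho')^{1+\alpha}} \le \rho^{-\alpha}\int_0^{2R}(\cdots)\frac{d\rho'}{\rho'} $ after the convergence of the Dini integral, but to land on exactly the stated right side one keeps the $\frac{d\rho}{\rho^{1+\alpha}}$ form and multiplies by $\abs{x-y}^\alpha$ as written.

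The main obstacle I anticipate is the bookkeeping that makes the fractional-order Wolff potentials and maximal functions match up with the correct homogeneity after multiplying through by $\abs{x-y}^\alpha$ and translating centers from $x,y$ to $x_0$: one must check that $B_\rho(x), B_\rho(y) \subseteq B_{CR}(x_0)$ for the relevant radii so that replacing the average $\fint_{B_R(x)}$ by $\fint_{B_R(x_0)}$ (up to a fixed geometric constant) is legitimate, and that the parameter identity $-\alpha + \frac{1+\alpha}{1+i_g} \in (0,n]$ holds on the whole range $\alpha \in [0,\widehat\alpha]$ with $\widehat\alpha < \beta$ so the Wolff potential is well-defined and, crucially, the exponent $n - (i_g+1)\beta_\alpha$ with $\beta_\alpha = -\alpha + \frac{1+\alpha}{1+i_g}$ stays in the admissible range for the monotonicity argument. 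A secondary technical point is justifying the passage from $M^{\#}$-control to genuine pointwise oscillation of $Du$: this requires that $x,y$ be Lebesgue points (assumed) and a uniform summability of the dyadic telescoping series, which follows because the exponent $\alpha > 0$ (or, at $\alpha = 0$, because $M^{\#}_{0}(Du)$ together with the Dini hypothesis \eqref{dytj} forces $(Du)_{B_r}$ to be Cauchy as $r \to 0$) — the $\alpha = 0$ endpoint in \eqref{du} needs this slightly more careful argument, whereas for $\alpha \in (0,\widehat\alpha]$ the geometric decay $\rho^\alpha$ gives convergence of the chain for free.
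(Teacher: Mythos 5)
Your plan correctly identifies the ingredients — the sharp maximal estimate \eqref{1.122}, the comparison $[M_{1-\alpha i_g,R}(\mu)]^{1/i_g}\lesssim W^{\mu}_{-\alpha+\frac{1+\alpha}{1+i_g},i_g+1}$ (and the matching exponent identity, which you verify correctly), the geometric decomposition $B_r(y)\subseteq B_{3r}(x)$ with $r=|x-y|/2$, and the Lebesgue-point limit — and for $\alpha\in(0,\widehat\alpha]$ your telescoping-from-$M^{\#}_\alpha$ argument for \eqref{du-du} is sound. However, there is a genuine gap at $\alpha=0$, which is precisely the regime needed for \eqref{du} and for the endpoint of \eqref{du-du}: $M^{\#}_{0,R}(Du)(x_0)$ is a \emph{supremum} of oscillations over scales, so it does \emph{not} control the telescoping sum $\sum_j\abs{(Du)_{B_{r_{j+1}}}-(Du)_{B_{r_j}}}$ — the right side of that estimate is $\sum_j\fint_{B_{r_j}}\abs{Du-(Du)_{B_{r_j}}}\,d\xi$, an infinite series of quantities each merely bounded by $M^{\#}_{0,R}(Du)(x_0)$. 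You acknowledge this difficulty, but the fix you sketch (the Dini hypothesis forcing $(Du)_{B_r}$ to be Cauchy) addresses only the coefficient error term; the data terms $\mu$, $\psi$ contribute to the oscillation at every scale, and boundedness of the sup does not convert to summability.

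The paper avoids this by \emph{not} reducing \eqref{du} to the statement of Theorem~\ref{Th1}. Instead it reuses the intermediate inequality \eqref{a0guji} from inside the proof of \eqref{1.11}, which bounds the \emph{accumulated} quantity $k_{m+1}=\abs{(Du)_{B_{R_{m+1}}(x_0)}-S}$ by $A_0+\sum_i(\text{Wolff increments})+\sum_i(\text{Dini increments})$; this is exactly the summed telescoping chain you need, and it converges because the geometric $\tfrac12$-factor is built in by the choice of $H$ (not by a $\rho^\alpha$ weight). Setting $S=0$ and using $k_i\le\fint_{B_i}\abs{Du}$ together with \eqref{mwanguji} gives \eqref{du} directly as $m\to\infty$; for \eqref{du-du} one repeats with general $S$ at both $x$ and $y$, chooses $S=(Du)_{B_{3r}(x)}$, and only then invokes \eqref{1.122} — but only to bound the \emph{single-scale} quantity $\fint_{B_{3r}(x)}\abs{Du-(Du)_{B_{3r}(x)}}\,d\xi \le r^{\alpha}M^{\#}_{\alpha,\frac{3R}{4}}(Du)(x)$, which is fine even at $\alpha=0$. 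So the essential missing idea in your proposal is to carry the summed form \eqref{a0guji}, rather than only the $\sup$-form conclusion \eqref{1.122}, forward from the proof of Theorem~\ref{Th1}.
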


\begin{remark}
 Our results  extend the results of Scheven \cite{s26}   to  obstacle problems with Orlicz growth. Besides, we merely require a much weaker condition that $a(\cdot,\cdot)$ satisfies the Dini-$BMO$ regularity compared with \cite{s26}.
\end{remark}
The remainder of this paper is organized as follows. Section 2 contains some notions and preliminary results. In Section 3,   we  derive  the  excess decay estimate for solutions to these problems by using some comparison estimates.  In Section 4, we obtain  pointwise and oscillation estimates for the gradients of solutions  by the  Wolff potentials.

\section{Preliminaries}\label{section2}
In this section, we introduce some notions and results which will be used in this paper. Firstly, we denote by $m$ any number in the natural number set $\mathbb{N}$, it is easily verified that
\begin{equation*}
\parallel f-(f)_{\Omega}\parallel_{L^2(\Omega)}=\min_{c\in \mathbb{R}^m}\parallel f-c\parallel_{L^2(\Omega)}
\end{equation*}
for any measurable set $\Omega \subseteq \mathbb{R}^n$ and every functions $f : \Omega \rightarrow \mathbb{R}^m$ such that $f\in L^2(\Omega)$. If $q\in[1,\infty)$, we have
\begin{equation} \label{1.8}
\parallel f-(f)_{\Omega}\parallel_{L^q(\Omega)}\leqslant2\min_{c\in \mathbb{R}^m}\parallel f-c\parallel_{L^q(\Omega)}.
\end{equation}
\begin{definition}
A Young function $B$ is called an $N$-function if
$$0<B(t)<+\infty \ \ for \ t>0$$
and
\begin{equation}\label{nhanshu}
\lim_{t\rightarrow+\infty}\frac{B(t)}{t}=\lim_{t\rightarrow0}\frac{t}{B(t)}=+\infty.
\end{equation}
It's obvious that $G(t)$ is an $N$-function.

The Young conjugate  of a Young function B will be denoted by $B^{\ast}$ and defined as
$$B^{\ast}(t)=\sup_{s\geq 0}\left\lbrace st-B(s)\right\rbrace  \ \ for \ t\geq 0.$$
\end{definition}
In particular,  if $B$ is an $N$-function, then $B^{\ast}$ is  an $N$-function as well.
\begin{definition}
A Young function B is said to satisfy the global $\vartriangle_2$ condition, denoted by $B\in\vartriangle_2$, if there exists a positive constant C such that for every $t>0$,
\begin{equation*}
B(2t)\leq CB(t).
\end{equation*}
Similarly, a Young function B is said to satisfy the global $\bigtriangledown_2$ condition, denoted by $B\in\bigtriangledown_2$, if there exists a  constant $\theta >1$ such that for every $t>0$,
\begin{equation*}
B(t)\leq \frac{B(\theta t)}{2\theta}.
\end{equation*}
\end{definition}

\begin{remark}  \label{remark1}
For an increasing function $f: \mathbb{R}^+\rightarrow\mathbb{R}^+$ satisfying  $\vartriangle_2$ condition $f(2t)\lesssim f(t)$ for $t\geqslant0$, it is easy to prove that $f(t+s)\leqslant c[f(t)+f(s)]$ holds for every $t,s\geqslant0$. Thus $G(t)$ satisfies  $\vartriangle_2$ condition and the subadditivity property: $G(t+s)\leqslant c[G(t)+G(s)]$.
\end{remark}

Next let we recall a basic property of $N$-function, which will be used in the sequel.
\begin{lemma}\cite{yz1}\label{gyoung}
If $B$ is an $N$-function, then $B$ satisfies the following Young's inequality
$$st\leq B^{*}(s)+B(t), \ \ \ for \ \ \forall s,t\geq0.$$
Furthermore, if $B\in \bigtriangleup_{2}\cap \bigtriangledown_{2}$ is an $N$-function, then $B$ satisfies the following Young's inequality with $\forall \varepsilon >0$,
$$st\leq \varepsilon B^{*}(s)+c(\varepsilon)B(t), \ \ \ for \ \ \forall s,t\geq0.$$
\end{lemma}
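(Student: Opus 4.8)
The plan is to obtain both inequalities directly from the variational definition of the Young conjugate, using only convexity of $B^{*}$ and the $\vartriangle_{2}$ condition, with no heavy machinery. First I would record the elementary preliminary facts: since $B\geq 0$ and $B(0)=0$ we have $B^{*}(0)=\sup_{\tau\geq 0}\{-B(\tau)\}=0$; as a pointwise supremum of the affine functions $s\mapsto s\tau-B(\tau)$, the conjugate $B^{*}$ is convex; and when $B$ is an $N$-function the growth condition $\lim_{t\to+\infty}B(t)/t=+\infty$ forces $B^{*}(s)<+\infty$ for every $s\geq 0$, so that $B^{*}$ is again an $N$-function. (The hypothesis $B\in\bigtriangledown_{2}$ additionally yields $B^{*}\in\vartriangle_{2}$, which we shall not actually need for this lemma but which matters when the inequality is later used with the roles of $B$ and $B^{*}$ interchanged.)

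The first inequality is then immediate: for fixed $s,t\geq 0$, the quantity $st-B(t)$ is one of the competitors in the supremum defining $B^{*}(s)$, hence $st-B(t)\leq B^{*}(s)$, i.e. $st\leq B^{*}(s)+B(t)$; note this step uses nothing beyond the definition and is valid for an arbitrary Young function $B$. For the refined inequality, fix $\varepsilon>0$. If $\varepsilon\geq 1$ the previous line already gives $st\leq B^{*}(s)+B(t)\leq\varepsilon B^{*}(s)+B(t)$, so the claim holds with $c(\varepsilon)=1$; assume therefore $0<\varepsilon<1$. I would write the product as $st=(\varepsilon s)(\varepsilon^{-1}t)$ and apply the first inequality to the pair $(\varepsilon s,\varepsilon^{-1}t)$, obtaining $st\leq B^{*}(\varepsilon s)+B(\varepsilon^{-1}t)$. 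For the first term, convexity of $B^{*}$ together with $B^{*}(0)=0$ gives $B^{*}(\varepsilon s)=B^{*}\big(\varepsilon s+(1-\varepsilon)\cdot 0\big)\leq\varepsilon B^{*}(s)$. For the second term, choose $k=\lceil\log_{2}(1/\varepsilon)\rceil\in\mathbb{N}$ so that $\varepsilon^{-1}\leq 2^{k}$; since $B$ is nondecreasing and $B\in\vartriangle_{2}$ with some constant $C$, iterating the doubling bound yields $B(\varepsilon^{-1}t)\leq B(2^{k}t)\leq C^{k}B(t)$. Setting $c(\varepsilon):=C^{\lceil\log_{2}(1/\varepsilon)\rceil}$ gives $st\leq\varepsilon B^{*}(s)+c(\varepsilon)B(t)$, which is the assertion.

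There is no serious obstacle in this argument; the only points deserving care are the two elementary structural facts invoked above, namely that $B^{*}$ is a convex function vanishing at the origin (so that the inequality $B^{*}(\varepsilon s)\leq\varepsilon B^{*}(s)$ is legitimate and non-vacuous, which is where finiteness of $B^{*}$ — and hence the $N$-function hypothesis — is used) and that the $\vartriangle_{2}$ constant can be iterated a fixed number of times to control $B(\lambda t)$ by a multiple of $B(t)$ for any prescribed $\lambda\geq 1$. Everything else is a one-line consequence of the definition of the conjugate, so I expect the write-up to be short; alternatively, one may simply cite \cite{yz1} and \cite{rr28} for this standard material.
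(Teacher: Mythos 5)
Your argument is correct and self-contained, but note that the paper does not actually give a proof of this lemma: it simply cites \cite{yz1}, so there is no in-text proof to compare against. Each of your steps checks out: the first inequality is, as you say, a one-line consequence of the definition of $B^{*}$ and holds for any Young function; for the $\varepsilon$-version with $0<\varepsilon<1$ the rescaling $st=(\varepsilon s)(\varepsilon^{-1}t)$, the bound $B^{*}(\varepsilon s)\leq\varepsilon B^{*}(s)$ from convexity of $B^{*}$ together with $B^{*}(0)=0$ (finiteness of $B^{*}$, needed for this step, indeed follows from the $N$-function growth of $B$), and the iterated $\vartriangle_{2}$ bound $B(\varepsilon^{-1}t)\leq C^{\lceil\log_{2}(1/\varepsilon)\rceil}B(t)$ are all valid, and the cases $\varepsilon\geq 1$ are handled trivially. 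Your observation that the $\bigtriangledown_{2}$ hypothesis is not used is accurate: the lemma as stated needs only $\vartriangle_{2}$ (the $\bigtriangledown_{2}$ assumption would be relevant for the symmetric variant $st\leq c(\varepsilon)B^{*}(s)+\varepsilon B(t)$, since $B\in\bigtriangledown_{2}$ is equivalent to $B^{*}\in\vartriangle_{2}$, but that variant is not what is asserted here). So your proof is slightly more general than the statement, and in particular is a perfectly acceptable replacement for the citation to \cite{yz1}.
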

Note that $G(t)$ satisfies the Young's inequality.

Another important property of Young's conjugate function is the following inequality, which  can be found in \cite{a1}:
\begin{equation}\label{a(x)4}
B^{*}\left( \frac{B(t)}{t}\right) \leqslant B(t).
\end{equation}
\begin{lemma}\cite{cm17,yz1}\label{gwan}
Under the assumption  \eqref{a(x)3}, G(t) is defined in \eqref{g}. Then we have

(1) $G(t)$ is strictly convex $N$-function and
$$G^{\ast}(g(t))\leqslant c G(t) \ \ \ for \ t\geqslant0 \ \ \ and \ \ some\ \  c>0;$$

(2) $G(t) \in\bigtriangledown _{2}.$
\end{lemma}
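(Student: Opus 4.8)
\emph{Proof sketch.} The entire statement is a quick consequence of the two‑sided power‑type bounds encoded in the index condition \eqref{a(x)3}. First I would observe that, since $\tfrac{tg'(t)}{g(t)}\ge i_g\ge1>0$ and $g(t)>0$ for $t>0$, one has $g'(t)>0$ on $(0,\infty)$, so $g$ is strictly increasing; hence $G'=g$ is strictly increasing and $G(0)=0$, which already makes $G$ a strictly convex Young function. Next, writing $\tfrac{d}{dt}\log g(t)=\tfrac{g'(t)}{g(t)}$ and integrating the inequalities $\tfrac{i_g}{u}\le\tfrac{g'(u)}{g(u)}\le\tfrac{s_g}{u}$ over $u\in[s,t]$ with $0<s\le t$, and then inserting the resulting lower (resp.\ upper) bound for $g(\tau)/g(t)$ into $G(t)=\int_0^t g(\tau)\,d\tau$, I obtain $\tfrac{tg(t)}{s_g+1}\le G(t)\le\tfrac{tg(t)}{i_g+1}$, hence $i_g+1\le\tfrac{tG'(t)}{G(t)}\le s_g+1$, and integrating once more yields
\[
\bigl(\tfrac{t}{s}\bigr)^{i_g}\le\frac{g(t)}{g(s)}\le\bigl(\tfrac{t}{s}\bigr)^{s_g},\qquad
\bigl(\tfrac{t}{s}\bigr)^{i_g+1}\le\frac{G(t)}{G(s)}\le\bigl(\tfrac{t}{s}\bigr)^{s_g+1}\qquad(0<s\le t).
\]

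From these bounds the remaining parts of (1) and all of (2) follow at once. For the $N$‑function property, $0<G(t)<\infty$ for $t>0$ is clear from $g\in C^1(\mathbb{R}^+)$, $g>0$ on $(0,\infty)$; taking $s=1<t$ gives $G(t)/t\ge G(1)t^{i_g}\to+\infty$ as $t\to+\infty$ (since $i_g\ge1$), while taking $t<s=1$ gives $G(t)\le G(1)t^{i_g+1}$, so $G(t)/t\le G(1)t^{i_g}\to0$ and thus $t/G(t)\to+\infty$ as $t\to0^+$; this verifies \eqref{nhanshu}. For (2), choose $\theta:=2^{1/i_g}>1$, which is admissible because $i_g\ge1$; applying the lower bound above with $s=t$ and $t$ replaced by $\theta t$ gives $G(\theta t)\ge\theta^{i_g+1}G(t)=\theta\cdot 2\cdot G(t)$, i.e.\ $G(t)\le\tfrac{G(\theta t)}{2\theta}$, so $G\in\bigtriangledown_2$.

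It remains to prove $G^{*}(g(t))\le cG(t)$. I would argue directly from $G^{*}(g(t))=\sup_{\tau\ge0}\{g(t)\tau-G(\tau)\}$: if $0\le\tau\le t$, then $g(t)\tau-G(\tau)\le g(t)\tau\le tg(t)$ since $G\ge0$; if $\tau>t$, then, using that $g$ is increasing, $g(t)\tau-G(\tau)\le g(t)\tau-\int_t^{\tau}g(\sigma)\,d\sigma\le g(t)\tau-g(t)(\tau-t)=tg(t)$. Hence $G^{*}(g(t))\le tg(t)\le(s_g+1)G(t)$ by the lower bound $tg(t)\le(s_g+1)G(t)$ obtained above, so the claim holds with $c=s_g+1$. (Alternatively one may invoke the Legendre identity $G^{*}(g(t))=tg(t)-G(t)$ valid for the differentiable convex $G$, or combine \eqref{a(x)4} with $g(t)\le(s_g+1)G(t)/t$ and the $\bigtriangleup_2$‑property of $G^{*}$.) No genuinely hard step arises here; the only points needing a little care are the boundary cases $t=0$ and $\tau=0$ in the last estimate (where both sides vanish) and the justification of the two logarithmic integrations, which is immediate from $g\in C^1(\mathbb{R}^+)$ and $g>0$ on $(0,\infty)$.
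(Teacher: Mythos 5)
Your argument is correct, and since the paper simply cites \cite{cm17,yz1} without reproducing a proof, there is no in-text argument to compare against; what you wrote is the standard derivation one would expect from those references. The key structural observations are all present and accurate: monotonicity of $g$ from the index condition, the two-sided bounds $\frac{tg(t)}{s_g+1}\le G(t)\le\frac{tg(t)}{i_g+1}$ (which you can also read off directly from an integration by parts of $G$), the power-type growth estimates that immediately yield the $N$-function limits and the $\bigtriangledown_2$ condition with $\theta=2^{1/i_g}$, and the Legendre identity $G^*(g(t))=tg(t)-G(t)$ (or your direct two-case estimate, which gives the slightly weaker but sufficient $G^*(g(t))\le tg(t)$), combined with $tg(t)\le(s_g+1)G(t)$. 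The only cosmetic remark: the paper later re-derives the same power-type bounds in Lemma \ref{ag}, so in the paper's logical order this lemma is an import rather than a consequence of \ref{ag}; your self-contained derivation avoids any circularity and is the right way to fill in the citation.
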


Note that  G(t) satisfies $\bigtriangleup_2$ and $\bigtriangledown_2$ conditions, then $W^{1,G}(\Omega)$ is a reflexive Banach space, see [\cite{hphp1},Theorem 6.1.4].

\begin{lemma}\cite{cho1,de1}\label{adaog}
Under the assumption  \eqref{a(x)1} and \eqref{a(x)3},  G(t) is defined in \eqref{g}. Then there exists $c=c(n,i_g,s_g,v,L)>0 $ such that
\begin{equation}
[a(x,\eta)-a(x,\xi)]\cdot(\eta-\xi)\geq cG(|\eta-\xi|), \ \ \ \ for \ \ every \ \ x,\eta,\xi \in \mathbb{R}^n.
\end{equation}
Especially, we have
\begin{equation}
a(x,\eta) \cdot \eta \geq cG(|\eta|), \ \ \ \ \ for \ \ every \ \ x,\eta \in \mathbb{R}^n.
\end{equation}
\end{lemma}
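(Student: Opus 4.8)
The plan is to deduce both inequalities from the strong monotonicity hidden in the first line of \eqref{a(x)1}, read along segments via the fundamental theorem of calculus, together with the elementary structural consequences of \eqref{a(x)3}. Those consequences, which I would record first, are: the map $t\mapsto g(t)/t$ is non-decreasing on $(0,\infty)$ (because $i_g\geq 1$ is exactly $tg'(t)\geq g(t)$); $g$ is doubling, $g(\lambda t)\leq\lambda^{s_g}g(t)$ for all $\lambda\geq 1$, $t>0$ (integrate $g'/g\leq s_g/t$); and, integrating $i_g\leq tg'(t)/g(t)\leq s_g$, the two-sided bound $(1+s_g)^{-1}\,tg(t)\leq G(t)\leq(1+i_g)^{-1}\,tg(t)$; in particular $tg(t)\geq (1+i_g)\,G(t)\geq 2\,G(t)$.

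Next, fix $x,\xi,\eta\in\R^n$ and set $V(t):=\xi+t(\eta-\xi)$ for $t\in[0,1]$. Since the second bound in \eqref{a(x)1} makes the a.e.\ derivative of $a(x,\cdot)$ locally bounded, a standard mollification argument lets one write $a(x,\eta)-a(x,\xi)=\int_0^1 D_\eta a(x,V(t))(\eta-\xi)\,dt$. Pairing this identity with $\eta-\xi$ and using the ellipticity bound in \eqref{a(x)1} gives
\[
[a(x,\eta)-a(x,\xi)]\cdot(\eta-\xi)\ \geq\ v\,|\eta-\xi|^2\int_0^1\frac{g(|V(t)|)}{|V(t)|}\,dt .
\]
So everything reduces to the auxiliary estimate $\int_0^1 g(|V(t)|)/|V(t)|\,dt\geq 4^{-s_g}\,g(|\eta-\xi|)/|\eta-\xi|$, which I regard as the only real point of the proof. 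To prove it, observe that the change of variables $t\mapsto 1-t$ shows the integral is unchanged under interchanging $\xi$ and $\eta$, so we may assume $|\xi|\leq|\eta|$; then $|\eta|\geq\tfrac12|\eta-\xi|$, hence for $t\in[\tfrac34,1]$ one has $|V(t)|\geq|\eta|-(1-t)|\eta-\xi|\geq\tfrac14|\eta-\xi|$. Using first the monotonicity of $s\mapsto g(s)/s$ on this subinterval and then the doubling of $g$,
\[
\int_0^1\frac{g(|V(t)|)}{|V(t)|}\,dt\ \geq\ \frac14\cdot\frac{g(\tfrac14|\eta-\xi|)}{\tfrac14|\eta-\xi|}\ =\ \frac{g(\tfrac14|\eta-\xi|)}{|\eta-\xi|}\ \geq\ 4^{-s_g}\,\frac{g(|\eta-\xi|)}{|\eta-\xi|}.
\]

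Combining the two displays yields $[a(x,\eta)-a(x,\xi)]\cdot(\eta-\xi)\geq 4^{-s_g}v\,|\eta-\xi|\,g(|\eta-\xi|)\geq c(i_g,s_g,v)\,G(|\eta-\xi|)$ by the bound $tg(t)\geq(1+i_g)G(t)$, which is the first inequality. For the second, the growth bound $|a(x,\eta)|\leq Lg(|\eta|)$ together with $g(0)=0$ forces $a(x,0)=0$, so taking $\xi=0$ in the first inequality gives $a(x,\eta)\cdot\eta\geq cG(|\eta|)$. The only steps requiring care are the absolute-continuity justification of the fundamental theorem of calculus along segments that may meet the Lebesgue-null set where $a(x,\cdot)$ fails to be differentiable (handled by the local boundedness of the derivative, e.g.\ via regularization), and the lower bound on $\int_0^1 g(|V(t)|)/|V(t)|\,dt$ above; the remainder is bookkeeping with the constants coming from \eqref{a(x)3}.
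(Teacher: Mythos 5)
The paper does not give its own proof of Lemma~\ref{adaog}; it is cited from \cite{cho1,de1}. Your argument is correct and follows the standard route used in those references and throughout the Orlicz-growth literature: integrate the ellipticity bound in \eqref{a(x)1} along the segment $V(t)=\xi+t(\eta-\xi)$, reduce to the elementary lower bound $\int_0^1 g(|V(t)|)/|V(t)|\,dt \gtrsim g(|\eta-\xi|)/|\eta-\xi|$, and convert $t\,g(t)$ to $G(t)$ via $(1+i_g)G(t)\le t g(t)\le (1+s_g)G(t)$, which indeed follows by integrating \eqref{a(x)3} exactly as in Lemma~\ref{ag}. The key auxiliary estimate is handled correctly: the WLOG reduction to $|\xi|\le|\eta|$ by the $t\mapsto 1-t$ symmetry, the bound $|V(t)|\ge\tfrac14|\eta-\xi|$ on $[3/4,1]$, the monotonicity of $s\mapsto g(s)/s$ (exactly $i_g\ge 1$), and the doubling $g(\tfrac14 s)\ge 4^{-s_g}g(s)$ are all valid. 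The deduction $a(x,0)=0$ from $|a(x,0)|\le Lg(0)=0$ and the specialization $\xi=0$ for the second inequality is also fine.

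The one point you flag yourself — justifying the fundamental theorem of calculus along the segment when $a(x,\cdot)$ is only assumed differentiable for a.e.\ $\eta$ — is the right thing to be careful about: a.e.\ classical differentiability with a bounded derivative does not by itself give absolute continuity on a segment. The resolution you sketch (mollify $a$ in $\eta$, apply the identity to $a_\varepsilon$, pass to the limit using the uniform bound $|D_\eta a|\le L\,g(|\eta|)/|\eta|$, which is locally bounded since $g(s)/s$ is non-decreasing and hence bounded near $0$) is the standard one and is adequate here; the structural hypotheses in this literature are always understood in a way that makes this step legitimate. No genuine gap.
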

The following iteration lemma turns out to be very useful in the sequel.
\begin{lemma}\cite{g11} \label{diedai}
 Let $f(t)$ be a nonnegative function defined on the interval $[a,b]$ with $a \geqslant 0$. Suppose that for $s,t \in [a,b]$ with $t<s$,
 \begin{equation*}
 f(t)\leqslant \frac{A}{(s-t)^{\alpha}}+\frac{B}{(s-t)^{\beta}}+C+\theta f(s)
 \end{equation*}
 holds, where $A,B,C\geqslant0, \alpha,\beta>0$ and $0\leqslant\theta<1$. Then there exists a constant $c=c(\alpha,\theta)$ such that
  \begin{equation*}
 f(\rho)\leqslant c\left( \frac{A}{(R-\rho)^{\alpha}}+\frac{B}{(R-\rho)^{\beta}}+C\right)
 \end{equation*}
 for any $\rho,R \in [a,b]$ with $\rho<R$.
\end{lemma}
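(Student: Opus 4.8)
The statement is the classical ``hole-filling'' iteration lemma, and the plan is to prove it by Giaquinta's standard geometric-sequence argument. Fix $\rho<R$ in $[a,b]$; as in every application of such a lemma (and as is implicit in the hypotheses) we may assume $f$ is bounded on $[a,b]$, say $\sup_{[a,b]}f=:M<\infty$. I would introduce a parameter $\tau\in(0,1)$, to be fixed later, and define the increasing sequence $t_{0}:=\rho$ and $t_{i+1}:=t_{i}+(1-\tau)\tau^{i}(R-\rho)$, so that $t_{i}=\rho+(1-\tau^{i})(R-\rho)\in[\rho,R]$, $t_{i}\nearrow R$, and $t_{i+1}-t_{i}=(1-\tau)\tau^{i}(R-\rho)$.

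First I would apply the hypothesis to the admissible pair $t=t_{i}<s=t_{i+1}$, obtaining
\[
f(t_{i})\leqslant\theta f(t_{i+1})+A\big((1-\tau)\tau^{i}(R-\rho)\big)^{-\alpha}+B\big((1-\tau)\tau^{i}(R-\rho)\big)^{-\beta}+C .
\]
Iterating this inequality $k$ times and collecting terms yields
\[
f(\rho)\leqslant\theta^{k}f(t_{k})+\frac{A}{\big((1-\tau)(R-\rho)\big)^{\alpha}}\sum_{i=0}^{k-1}(\theta\tau^{-\alpha})^{i}+\frac{B}{\big((1-\tau)(R-\rho)\big)^{\beta}}\sum_{i=0}^{k-1}(\theta\tau^{-\beta})^{i}+C\sum_{i=0}^{k-1}\theta^{i}.
\]

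Next I would choose $\tau$ close to $1$, depending only on $\alpha,\beta,\theta$, so that $\tau^{\max\{\alpha,\beta\}}>\theta$; then $\theta\tau^{-\alpha}<1$, $\theta\tau^{-\beta}<1$ and $\theta<1$, so all three partial sums are dominated by convergent geometric series whose values depend only on $\alpha,\beta,\theta$. Letting $k\to\infty$ and using $f(t_{k})\leqslant M<\infty$ together with $\theta<1$, the remainder $\theta^{k}f(t_{k})\to 0$, and one arrives at $f(\rho)\leqslant c(\alpha,\beta,\theta)\big[A(R-\rho)^{-\alpha}+B(R-\rho)^{-\beta}+C\big]$, which is the asserted estimate (the constant depends on $\alpha,\beta,\theta$, reducing to $c(\alpha,\theta)$ when $B=0$ or $\alpha=\beta$, as in the statement).

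The only genuinely delicate point — the main obstacle — is justifying $\theta^{k}f(t_{k})\to 0$, since the points $t_{k}$ accumulate at $R$, where no a priori control on $f$ is assumed; this is precisely where boundedness of $f$ on $[a,b]$ enters, and it is always available in the intended applications (there $f$ is a norm or energy over a fixed bounded region). Everything else — the construction of the sequence, the geometric bookkeeping, and the admissible choice of $\tau$ — is routine.
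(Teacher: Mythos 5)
Your proof is correct and coincides with the standard geometric-sequence argument in the cited reference \cite{g11}; the paper gives no proof of its own, only the citation, so there is nothing different to compare against. Two of your side remarks are worth noting as small corrections to the statement as printed: the constant genuinely depends on $\beta$ as well, so it should read $c=c(\alpha,\beta,\theta)$, and the hypothesis should include that $f$ is \emph{bounded} (as it does in \cite{g11}), since this is exactly what is needed to kill the remainder term $\theta^{k}f(t_{k})$ in the limit.
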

The proof of the following lemma can be found in \cite{xiong1}.
\begin{lemma}\label{qianqi}
Let $\Omega \subset \mathbb{R}$ be a bounded domain. Assume that $1+i_g\leqslant n$ , $h \in \mathcal{T}^{1,G}_{0}(\Omega)$ satisfies
\begin{equation*}
\int_{\Omega \cap \left\lbrace |h|\leqslant k\right\rbrace }|Dh|^{1+i_g}dx\leqslant Mk+M^{\frac{1+i_g}{i_g}}
\end{equation*}

 for $\forall \  k>0$,  and fixed constants  $M>0$.
Then we have
$$\int_{\Omega}|h|^{1+\alpha}dx\leqslant c_{1}M^{\frac{1+\alpha}{i_g}},$$
$$\int_{\Omega}|Dh|^{1+\beta}dx\leqslant c_{2}M^{\frac{1+\beta}{i_g}},$$
where $0<\alpha <min\left\lbrace 1,\frac{n(i_g-1)+1+i_g}{n-1-i_g}\right\rbrace, 0<\beta <min\left\lbrace 1,\frac{n(i_g-1)+1}{n-1}\right\rbrace,c_1=c_1(\Omega,n,i_g,\alpha),c_2=c_2(\Omega,n,i_g,\beta).$
\end{lemma}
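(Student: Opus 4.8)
\emph{Proof sketch.} The plan is to run the classical Boccardo--Gallou\"{e}t truncation scheme. Set $p:=1+i_{g}$, so that \eqref{a(x)3} gives $p\ge2$ while the hypothesis gives $p\le n$; I would assume $p<n$, the borderline case $p=n$ being identical after replacing the Sobolev exponent $p^{\ast}:=\frac{np}{n-p}$ below by an arbitrarily large finite one. For each $k>0$ the truncation $v_{k}:=T_{k}(h)$ lies in $W^{1,G}_{0}(\Omega)\cap L^{\infty}(\Omega)$, hence --- using $G(t)\ge c\,t^{1+i_{g}}$ for large $t$ (a consequence of \eqref{a(x)3}) and the boundedness of $\Omega$ --- in $W^{1,1}_{0}(\Omega)$ with $Dv_{k}=Dh\,\mathbf{1}_{\{|h|\le k\}}\in L^{p}(\Omega)$ a.e., so the hypothesis reads $\int_{\Omega}|Dv_{k}|^{p}\,dx\le Mk+M^{1+1/i_{g}}$ for all $k>0$. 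The first step is to convert this into super-level information on $h$: applying the Sobolev inequality to $v_{k}$ and using $|v_{k}|=k$ on $\{|h|>k\}$ gives
\[
k^{p^{\ast}}|\{|h|>k\}|\ \le\ \int_{\Omega}|v_{k}|^{p^{\ast}}\,dx\ \le\ c_{S}\Big(\int_{\Omega}|Dv_{k}|^{p}\,dx\Big)^{p^{\ast}/p}\ \le\ c_{S}\big(Mk+M^{1+1/i_{g}}\big)^{p^{\ast}/p},
\]
whence, since $M^{1+1/i_{g}}\le Mk$ for $k\ge k_{0}:=M^{1/i_{g}}$, one obtains $|\{|h|>k\}|\le c\,M^{\frac{n}{n-p}}\,k^{-\sigma}$ for $k\ge k_{0}$, where $\sigma:=\frac{n(p-1)}{n-p}$, together with the trivial bound $|\{|h|>k\}|\le|\Omega|$ for all $k>0$; note $\sigma=1+\frac{n(i_{g}-1)+1+i_{g}}{n-1-i_{g}}$.

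Next I would deduce the $L^{1+\alpha}$ estimate from the layer-cake formula, splitting the $k$-integral at $k_{0}$:
\[
\int_{\Omega}|h|^{1+\alpha}\,dx=(1+\alpha)\!\int_{0}^{\infty}\!k^{\alpha}|\{|h|>k\}|\,dk\ \le\ (1+\alpha)|\Omega|\!\int_{0}^{k_{0}}\!k^{\alpha}\,dk\ +\ c\,M^{\frac{n}{n-p}}\!\int_{k_{0}}^{\infty}\!k^{\alpha-\sigma}\,dk.
\]
The first piece equals $\frac{|\Omega|}{1+\alpha}M^{(1+\alpha)/i_{g}}$, already of the claimed size; the second converges precisely because $1+\alpha<\sigma$ --- that is, exactly because $\alpha$ lies in the stated range --- and, since $\frac{\sigma}{i_{g}}=\frac{\sigma}{p-1}=\frac{n}{n-p}$, its $M$-exponent collapses to $\frac{1+\alpha}{i_{g}}$. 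This yields $\int_{\Omega}|h|^{1+\alpha}\,dx\le c_{1}M^{(1+\alpha)/i_{g}}$ with $c_{1}=c_{1}(\Omega,n,i_{g},\alpha)$.

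For the gradient I would combine the super-level estimate above with the elementary inclusion $\{|Dh|>\lambda\}\subseteq(\{|Dh|>\lambda\}\cap\{|h|\le k\})\cup\{|h|>k\}$: Chebyshev's inequality and the hypothesis give $|\{|Dh|>\lambda\}|\le\lambda^{-p}\big(Mk+M^{1+1/i_{g}}\big)+|\{|h|>k\}|$ for all $\lambda,k>0$. For $\lambda\ge k_{0}$ I would balance the two terms by the choice $k=k(\lambda):=M^{\frac{1}{n-1}}\lambda^{\frac{n-p}{n-1}}$; a short computation confirms $k(\lambda)\ge k_{0}$ (in fact $k(k_{0})=k_{0}$) and $Mk(\lambda)\ge M^{1+1/i_{g}}$ on that range, and both resulting terms are then a constant times $M^{\frac{n}{n-1}}\lambda^{-\frac{n(p-1)}{n-1}}$. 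Inserting this into the layer-cake formula, split again at $k_{0}$:
\[
\int_{\Omega}|Dh|^{1+\beta}\,dx\ \le\ (1+\beta)|\Omega|\!\int_{0}^{k_{0}}\!\lambda^{\beta}\,d\lambda\ +\ c\,M^{\frac{n}{n-1}}\!\int_{k_{0}}^{\infty}\!\lambda^{\beta-\frac{n(p-1)}{n-1}}\,d\lambda;
\]
the tail converges exactly when $1+\beta<\frac{n(p-1)}{n-1}$, i.e. for $\beta$ in the stated range (note $\frac{n(p-1)}{n-1}=1+\frac{n(i_{g}-1)+1}{n-1}$), and, using $\frac{1}{i_{g}}\cdot\frac{n(p-1)}{n-1}=\frac{n}{n-1}$, the $M$-powers again collect to $\frac{1+\beta}{i_{g}}$. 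This gives $\int_{\Omega}|Dh|^{1+\beta}\,dx\le c_{2}M^{(1+\beta)/i_{g}}$ with $c_{2}=c_{2}(\Omega,n,i_{g},\beta)$.

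The analytic ingredients here are entirely standard; the one point demanding care will be the exponent bookkeeping --- checking that the thresholds $\sigma=\frac{n(p-1)}{n-p}$ and $\frac{n(p-1)}{n-1}$ are exactly the upper bounds imposed on $\alpha$ and $\beta$, that the balancing choice keeps $k(\lambda)\ge k_{0}$ (so that the super-level estimate applies there), and that in each terminal integral the $M$-powers collapse to $M^{(1+\alpha)/i_{g}}$, resp. $M^{(1+\beta)/i_{g}}$, uniformly over all $M>0$, so that no lower bound on $M$ is needed. A minor preliminary is the fact that $T_{k}(h)\in W^{1,G}_{0}(\Omega)\cap L^{\infty}(\Omega)$ has zero trace and gradient in $L^{1+i_{g}}(\Omega)$, so that the Gagliardo--Nirenberg--Sobolev inequality is available (here $1+i_{g}\le n$ is used).
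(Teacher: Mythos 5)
Your proof is correct and is the standard Boccardo--Gallou\"{e}t truncation scheme; the paper defers the proof to \cite{xiong1}, which uses the same method, and your exponent bookkeeping checks out (including the identities $\sigma/i_g = n/(n-p)$ and $\frac{1}{i_g}\cdot\frac{n(p-1)}{n-1}=\frac{n}{n-1}$ that collapse the $M$-powers, and the verification that $k(k_0)=k_0$ so the balancing choice stays in the regime where the super-level estimate applies). The only slip is cosmetic: once the layer-cake prefactor $(1+\alpha)$ is carried through, the ``first piece'' is $|\Omega|\,M^{(1+\alpha)/i_g}$ rather than $\tfrac{|\Omega|}{1+\alpha}M^{(1+\alpha)/i_g}$; this is absorbed into $c_1$ and does not affect the argument.
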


\section{Comparison estimates }\label{section3}
Our goal in this section is a suitable comparison estimate between  the  solutions of obstacle problems and the solutions of elliptic equations. Therefore we can obtain  the excess decay estimate for solutions of obstacle problems with measure data.
 Let's start with the following lemma, which will be useful in the sequel.
 \begin{lemma}\label{ag}
Assume that $g(t)$ satisfies   \eqref{a(x)3}, $G(t)$ is defined in \eqref{g}. Then we have

(1) for any  $\beta \geq1$,

 \ \ \ \ \ \ \ \ \ $\beta^{i_g}\leq \dfrac{g(\beta t)}{g(t)}\leq \beta^{s_g}$ \ \ \ and \ \ \  $\beta^{1+i_g}\leq \dfrac{G(\beta t)}{G(t)} \leq \beta^{1+s_g}$,  \ \ \  for every $t>0$,

for any  $0<\beta<1$,

 \ \ \ \ \ \ \ \ \ $\beta^{s_g}\leq \dfrac{g(\beta t)}{g(t)}\leq \beta^{i_g}$ \ \ \ and \ \ \
$\beta^{1+s_g}\leq \dfrac{G(\beta t)}{G(t)} \leq \beta^{1+i_g}$, \ \ \ for every $t>0$.

(2) for any  $\beta \geq1$,

 \ \ \ \ \ \ \ \ \ $\beta^{\frac{1}{s_g}}\leq \dfrac{g^{-1}(\beta t)}{g^{-1}(t)}\leq \beta^{\frac{1}{i_g}}$ \ \ \ and \ \ \  $\beta^{\frac{1}{1+s_g}}\leq \dfrac{G^{-1}(\beta t)}{G^{-1}(t)} \leq \beta^{\frac{1}{1+i_g}}$,  \ \ \  for every $t>0$,

for any  $0<\beta<1$,

 \ \ \ \ \ \ \ \ \ $\beta^{\frac{1}{i_g}}\leq \dfrac{g^{-1}(\beta t)}{g^{-1}(t)}\leq \beta^{\frac{1}{s_g}}$ \ \ \ and \ \ \
$\beta^{\frac{1}{1+i_g}}\leq \dfrac{G^{-1}(\beta t)}{G^{-1}(t)} \leq \beta^{\frac{1}{1+s_g}}$, \ \ \ for every $t>0$.
\end{lemma}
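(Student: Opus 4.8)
The plan is to derive everything from the two-sided bound on the logarithmic derivative $tg'(t)/g(t)$ in \eqref{a(x)3} by an elementary monotonicity argument, carried out first for $g$, then bootstrapped to $G$, and finally inverted.

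First I would treat part (1) for $g$. The point is that $i_g \le tg'(t)/g(t)$ is equivalent to $t \mapsto g(t)/t^{i_g}$ being nondecreasing on $(0,\infty)$ — its derivative is $t^{-i_g-1}\bigl(tg'(t)-i_g g(t)\bigr) \ge 0$ — while $tg'(t)/g(t) \le s_g$ makes $t \mapsto g(t)/t^{s_g}$ nonincreasing. (In particular $g$ is strictly increasing, since $g'(t) \ge i_g g(t)/t > 0$ for $t>0$, so $g^{-1}$ is well defined.) For $\beta \ge 1$ and $t>0$, comparing the values of these two quotients at $t$ and at $\beta t$ gives $\beta^{i_g} \le g(\beta t)/g(t) \le \beta^{s_g}$; for $0<\beta<1$ the same two monotonicities, now compared at $\beta t \le t$, reverse the roles and give $\beta^{s_g} \le g(\beta t)/g(t) \le \beta^{i_g}$. (Equivalently, $h(\sigma):=\log g(e^\sigma)$ satisfies $i_g \le h'\le s_g$, and one integrates $h'$ over the interval between $\log t$ and $\log t+\log\beta$.)

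Next I would transfer this to $G$. Since $g$ is continuous and positive on $(0,\infty)$, $G$ is $C^1$ with $G'=g>0$ and $G(0)=0$. Plugging $\beta=\tau/t\in(0,1]$ into the $g$-estimate, i.e. $(\tau/t)^{s_g}g(t)\le g(\tau)\le(\tau/t)^{i_g}g(t)$ for $0<\tau\le t$, and integrating over $\tau\in(0,t)$ yields
\[
\frac{t\,g(t)}{s_g+1}\ \le\ G(t)\ \le\ \frac{t\,g(t)}{i_g+1},
\]
hence $i_g+1\le tG'(t)/G(t)\le s_g+1$. Thus $G$ obeys the hypotheses of \eqref{a(x)3} with $i_g+1,s_g+1$ in place of $i_g,s_g$, and the monotonicity argument of the previous paragraph applied verbatim to $G$ — with $G(t)/t^{i_g+1}$ nondecreasing and $G(t)/t^{s_g+1}$ nonincreasing — gives the claimed bounds for $G(\beta t)/G(t)$ in both ranges of $\beta$.

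Finally, part (2) follows by inversion. For $\lambda\ge1$ and $t>0$, set $s=g^{-1}(t)$ and $s'=g^{-1}(\lambda t)$, so that $g(s')=\lambda g(s)$ and, $g$ being strictly increasing, $s'\ge s$; writing $s'=\mu s$ with $\mu\ge1$, part (1) gives $\mu^{i_g}\le\lambda=g(\mu s)/g(s)\le\mu^{s_g}$, i.e. $\lambda^{1/s_g}\le g^{-1}(\lambda t)/g^{-1}(t)\le\lambda^{1/i_g}$; for $0<\lambda<1$ the same computation with $\mu\in(0,1)$ and the $\beta<1$ case of part (1) gives $\lambda^{1/i_g}\le g^{-1}(\lambda t)/g^{-1}(t)\le\lambda^{1/s_g}$. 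Repeating this step with $G$, $G^{-1}$ and the indices $i_g+1,s_g+1$ produces the stated bounds for $G^{-1}$. I expect the only genuine bookkeeping hazard to be keeping straight which inequality flips as $\beta$ (or $\lambda$) passes through $1$; the substance is just the monotonicity of $g(t)/t^{i_g}$, $g(t)/t^{s_g}$ and their $G$-counterparts, together with the elementary integral estimate bounding $G(t)$ between $tg(t)/(s_g+1)$ and $tg(t)/(i_g+1)$.
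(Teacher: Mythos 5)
Your proof is correct and follows essentially the same route as the paper: integrate the logarithmic-derivative bound $i_g\le tg'(t)/g(t)\le s_g$ to get the power-law estimates for $g$, derive the two-sided comparison $tg(t)/(s_g+1)\le G(t)\le tg(t)/(i_g+1)$ to transfer the bound to $G$, and then invert. The only cosmetic difference is that you obtain the comparison between $G(t)$ and $tg(t)$ by integrating the already-proved pointwise bounds on $g$, whereas the paper derives $tg(t)\le(s_g+1)G(t)$ directly from \eqref{a(x)3} via integration by parts on $\int_0^t sg'(s)\,ds$; both yield the same bound $i_g+1\le tG'(t)/G(t)\le s_g+1$ and the rest of the argument is identical.
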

\begin{proof}
We first consider the case (1).

For $\beta \geqslant1$,  \eqref{a(x)3} allows us to estimate
\begin{equation*}
\frac{i_g}{\beta t}\leqslant \frac{g'(\beta t)}{g(\beta t)}\leqslant \frac{s_g}{\beta t}.
\end{equation*}
By integrating the above inequality over $[1, \beta]$, we obtain
\begin{equation*}
i_{g} \log {\beta} \leqslant \log \frac{g(\beta t)}{g(t)}\leqslant s_g \log\beta.
\end{equation*}
Hence
\begin{equation*}
\beta^{i_g} \leqslant  \frac{g(\beta t)}{g(t)}\leqslant \beta^{s_g}.
\end{equation*}
We make use of \eqref{a(x)3} again to get
\begin{equation*}
s_{g}\int_{0}^{t}g(s)ds \geqslant \int_{0}^{t}sg'(s)ds=tg(t)-\int_{0}^{t}g(s)ds,
\end{equation*}
which implies that
\begin{equation*}
tg(t)\leqslant (s_g+1)G(t),
\end{equation*}
then we have
\begin{equation*}
\left(\log G(t) \right) '\leqslant (s_g+1)\left( \log t\right) '.
\end{equation*}
By integrating the above inequality over $[t, \beta t] $ for any $t>0$, we conclude that
\begin{equation*}
\log \frac{G(\beta t)}{G(t)}=\int_{t}^{\beta t}\left( \log G(s)\right) 'ds\leqslant \int_{t}^{\beta t}(s_g+1)\left( \log s\right)'ds=(s_g+1)\log \beta.
\end{equation*}
Thus
\begin{equation*}
G(\beta t)\leqslant \beta ^{s_g+1}G(t).
\end{equation*}
Likewise,
\begin{equation*}
G(\beta t)\geqslant \beta ^{i_g+1}G(t).
\end{equation*}
For $0<\beta<1$, we can prove it in the same way.

As for the case (2), being  $g$ strictly increasing and with infinite limit, then $g^{-1}$ exists, is defined for all $t\in \mathbb{R}$ and it is strictly increasing. Because of (1), for $\beta \geqslant1$, we have
$$g(\beta t)\leqslant \beta^{s_g}g(t).$$
Consequently,
$$\beta t \leqslant g^{-1}(\beta^{s_g}g(t)).$$
Now we choose $t=g^{-1}(s)$, then we get
$$\beta g^{-1}(s) \leqslant g^{-1}(\beta^{s_g}s),$$
then
$$\beta^{\frac{1}{s_g}} g^{-1}(s) \leqslant g^{-1}(\beta s).$$
In other cases we can prove it similarly.

\end{proof}

It's obvious that  lemma \ref{ag} implies that
\begin{equation}\label{lg}
L^{1+s_g}(\Omega)\subset L^{G}(\Omega) \subset L^{1+i_g}(\Omega) \subset L^1(\Omega).
\end{equation}

The next lemma provide us with the  comparison estimate between the solutions of elliptic obstacle problems with measure data and the solutions of the corresponding homogeneous obstacle problems.
\begin{lemma}\label{bijiao}
Assume that conditions  \eqref{a(x)1}-\eqref{a(x)3} are fulfilled, let $B_{2R}(x_0)\subset \Omega,  f \in L^{1}(B_R(x_0))\cap (W^{1,G}(B_R(x_0)))'$ and the map $u\in W^{1,G}(B_R(x_0))$ with $u \geq \psi$ solves the variational inequality
\begin{equation}\label{u0}
\int_{B_R(x_0)} a(x,Du)\cdot D(v-u)dx \geq \int_{B_R(x_0)} f(v-u) dx
\end{equation}
for any  $v \in u+W_0^{1,G}(B_R(x_0))$ that  satisfy $v \geq \psi $ a.e.  on $B_R(x_0)$.
Let $w \in u+W_{0}^{1,G}(B_R(x_0))$ with $w \geq \psi $ be the weak solution of the homogeneous obstacle problem
\begin{equation}\label{w-v}
\int_{B_R(x_0)} a(x,Dw)\cdot D(v-w)dx \geq 0.
\end{equation}
Then there exists $c=c(n,i_g,s_g,v,L)$ such that
\begin{equation}
\fint_{B_R(x_0)} |Du-Dw| \operatorname{d}x\leqslant c\left[R \fint_{B_R(x_0)}|f|dx\right] ^{\frac{1}{i_g}}.
\end{equation}
\end{lemma}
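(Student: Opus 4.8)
The plan is to test the variational inequality \eqref{u0} for $u$ and the weak formulation \eqref{w-v} for $w$ against a common competitor, then add the two resulting inequalities and exploit the monotonicity estimate of Lemma \ref{adaog}. The natural choice of test function is $v=w$ in \eqref{u0} (which is admissible because $w\geq\psi$ and $w\in u+W_0^{1,G}(B_R(x_0))$) and $v=u$ in \eqref{w-v} (admissible because $u\geq\psi$ and $u-w\in W_0^{1,G}(B_R(x_0))$). Writing $B_R=B_R(x_0)$, the first gives $\int_{B_R}a(x,Du)\cdot D(w-u)\,dx\geq\int_{B_R}f(w-u)\,dx$, the second gives $\int_{B_R}a(x,Dw)\cdot D(u-w)\,dx\geq0$; adding them yields
\[
\int_{B_R}\bigl[a(x,Du)-a(x,Dw)\bigr]\cdot D(u-w)\,dx\leq\int_{B_R}f(u-w)\,dx.
\]
By Lemma \ref{adaog} the left side is bounded below by $c\int_{B_R}G(|Du-Dw|)\,dx$.

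Next I would estimate the right side $\int_{B_R}|f|\,|u-w|\,dx$. Since $u-w\in W_0^{1,G}(B_R)$, I invoke the Orlicz–Sobolev–Poincaré inequality to control $\|u-w\|$ on $B_R$ by $R\cdot(\text{average of }G(|Du-Dw|))$ in the appropriate averaged/Luxemburg sense; the cleanest route is to use a Sobolev inequality in the form $\fint_{B_R}|u-w|\,dx\leq cR\,G^{-1}\!\bigl(\fint_{B_R}G(|Du-Dw|)\,dx\bigr)$, or to bound $\|u-w\|_{L^\infty}$ crudely and pay a power of $R$; either way one gets
\[
\int_{B_R}|f|\,|u-w|\,dx\leq c\,|B_R|\,R\fint_{B_R}|f|\,dx\cdot G^{-1}\!\Bigl(\fint_{B_R}G(|Du-Dw|)\,dx\Bigr),
\]
after dividing through by $|B_R|$. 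Setting $X:=\fint_{B_R}G(|Du-Dw|)\,dx$ and $F:=R\fint_{B_R}|f|\,dx$, this reads $cX\leq cF\,G^{-1}(X)$, i.e. $G(t)\leq cF\,t$ with $t=G^{-1}(X)$, hence $G(t)/t\leq cF$. Using Lemma \ref{ag}(2) (the doubling bounds on $G^{-1}$) together with $i_g\leq tg'(t)/g(t)$ — equivalently the almost-linearity $G(t)\sim tg(t)$ and $g$'s growth — one converts $G(t)/t\lesssim F$ into $t\lesssim G^{-1}(\text{something})$ and ultimately into $\fint_{B_R}|Du-Dw|\,dx\leq cF^{1/i_g}$, which is the claim; concretely, from $G(t)\leq cFt$ and $G(t)\geq G(1)t^{1+i_g}$-type lower scaling (Lemma \ref{ag}(1)) one extracts $t^{i_g}\leq cF$.

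The main obstacle I anticipate is handling the term $\int_{B_R}|f|\,|u-w|\,dx$ rigorously when $f\in L^1$ only: one cannot directly use a duality pairing in $(W^{1,G})'$ since $f$ need not lie there uniformly, so the correct device is to first prove the estimate with $|u-w|$ truncated at level $k$ (using $v=u\pm T_k(u-w)$-type competitors, staying above $\psi$), obtain a bound of the form $\int_{\{|u-w|\leq k\}}G(|Du-Dw|)\,dx\leq c\,k\fint|f|\,dx\,|B_R|+\dots$, and then feed this into the Orlicz analogue of Lemma \ref{qianqi} to remove the truncation and recover the full $L^1$-gradient bound with the exponent $1/i_g$. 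The passage through the $\triangle_2$/$\nabla_2$ machinery (Young's inequality of Lemma \ref{gyoung}, the estimate \eqref{a(x)4}, and the scaling Lemma \ref{ag}) is what makes the exponent come out as $1/i_g$ rather than the $p$-Laplacian value $1/(p-1)$; care is needed so that all constants depend only on $n,i_g,s_g,v,L$.
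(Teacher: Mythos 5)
Your overall plan matches the paper's: test \eqref{u0} and \eqref{w-v} against a common competitor (your $v=w$, $v=u$ is equivalent to the paper's $v=(u+w)/2$ up to a factor of $2$), add and invoke the monotonicity estimate of Lemma~\ref{adaog}, and then handle $\int f(u-w)$ via an $L^\infty$-Sobolev bound when $1+i_g>n$ and via truncated competitors $v_k=u+T_k(w-u)$, $\overline v_k = w - T_k(w-u)$ plus Lemma~\ref{qianqi} when $1+i_g\le n$. You also correctly flag that the $L^1$ pairing $\int |f|\,|u-w|$ is the delicate point. So the core ideas are aligned.

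There is, however, a genuine gap in the step where you extract the exponent $1/i_g$. Your heuristic is: from $G(t)\le cFt$ with $t=G^{-1}(X)$, use the lower growth bound of Lemma~\ref{ag}(1) to get $t^{i_g}\le cF$. But Lemma~\ref{ag}(1) gives $G(t)\ge G(1)\,t^{1+i_g}$ \emph{only for $t\ge 1$}; for $0<t<1$ the correct lower bound is $G(t)\ge G(1)\,t^{1+s_g}$, which yields $t\lesssim F^{1/s_g}$. When $F<1$ and $t<1$ this is strictly weaker than the claimed $t\lesssim F^{1/i_g}$, since $F^{1/s_g}>F^{1/i_g}$. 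The same obstruction reappears if you instead go through Lemma~\ref{qianqi}: after converting $G(|Du-Dw|)$ into $|Du-Dw|^{1+i_g}$ an additive constant appears, and the hypothesis of Lemma~\ref{qianqi} requires a right-hand side of the form $Mk+M^{(1+i_g)/i_g}$, so you need $M\gtrsim 1$; for small $\int|f|$ this fails and the exponent $1/i_g$ is not produced.

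The paper resolves this with a \emph{rescaling} that you do not mention and that is in fact the decisive device. Setting $A=\bigl(\int_{B_1}|f|\,dx\bigr)^{1/i_g}$ and
\[
\overline u = A^{-1}u,\quad \overline w = A^{-1}w,\quad \overline\psi = A^{-1}\psi,\quad
\overline f = A^{-i_g}f,\quad
\overline a(x,\eta)=A^{-i_g}a(x,A\eta),\quad
\overline g(t)=A^{-i_g}g(At),
\]
one checks that $\overline a,\overline g$ satisfy \eqref{a(x)1} and \eqref{a(x)3} with the \emph{same} constants $i_g,s_g,v,L$ (this is why the exponent $i_g$ appears in $A$), while $\int_{B_1}|\overline f|\,dx=1$. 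So the problem is normalized to the ``constant right-hand side'' regime, where the Sobolev/truncation arguments give the absolute bound $\int_{B_1}|D\overline u - D\overline w|\,dx\le c$, and undoing the rescaling yields $\int_{B_1}|Du-Dw|\,dx\le cA = c\bigl(\int_{B_1}|f|\,dx\bigr)^{1/i_g}$. You should incorporate this rescaling explicitly; without it, the bookkeeping through Lemma~\ref{ag}(1) or Lemma~\ref{qianqi} does not close with the exponent $1/i_g$ for small data.

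Two smaller points: (i) the intermediate display you write,
$\int_{B_R}|f|\,|u-w|\,dx\le c|B_R|\,R\fint_{B_R}|f|\,dx\cdot G^{-1}\bigl(\fint_{B_R}G(|Du-Dw|)\,dx\bigr)$,
is only valid with an $L^\infty$ bound on $u-w$; the integral form of the Sobolev--Poincar\'e inequality $\fint|u-w|\le cR\,G^{-1}(\cdots)$ does not permit pulling $u-w$ out of the integral against $|f|$, so the case distinction on $1+i_g\gtrless n$ (as in the paper) is unavoidable. (ii) The competitors should be $u+T_k(w-u)$ and $w-T_k(w-u)$; the version $u\pm T_k(u-w)$ you sketch is not admissible in general since $u-T_k(u-w)$ need not stay above $\psi$.
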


\begin{proof}
Without loss of generality we may assume that $x_0=0, R=1$ by defining

$$\widehat{u}(x)=\dfrac{u(Rx+x_0)}{R}, \ \ \ \widehat{w}(x)=\dfrac{w(Rx+x_0)}{R}, \ \ \ \widehat{f}(x)=Rf(Rx+x_0).$$

Case 1: $\int_{B_1}|f|dx\leqslant 1$. If $1+i_g>n$, because of $u-w\in W_{0}^{1,G}(B_1)$, $u-w\in W_{0}^{1,1+i_g}(B_1)$, then we make use of Sobolev's inequality to get $u-w\in L^{\infty}(B_{1})$. Now we take $v=\frac{u+w}{2}\in u+W_{0}^{1,G}(B_1)$ as comparison functions in the variational inequalities \eqref{u0} and \eqref{w-v}, which implies that
\begin{eqnarray*}
\int_{B_1}|Du-Dw|^{1+i_g}dx &\leq & c\int_{B_1}[G(|Du-Dw|)+1]dx    \nonumber \\
&\leq & c\int_{B_1}[a(x,Dw)-a(x,Du)] \cdot (Dw-Du)dx +c \nonumber  \\
&\leq & c\int_{B_1}f(w-u)dx+c    \nonumber  \\
&\leq & c\parallel u-w\parallel_{L^{\infty}(B_1)}\int_{B_1}|f|dx+c \\
&\leq & c \parallel Du-Dw\parallel_{L^{1+i_g}(B_1)}+c,
\end{eqnarray*}
where we used Lemma \ref{ag} and Lemma \ref{adaog}.
Then we have
\begin{equation*}
\int_{B_1}|Du-Dw|dx\leqslant c.
\end{equation*}
If $1+i_g\leqslant n$, we define
$$D_{k}:=\left\lbrace x \in B_{1} : |u(x)-w(x)| \leq k \right\rbrace, \ \ \ \forall \ k>0, $$
and let $v_k:=u+T_{k}(w-u)$ and $\overline{v_k}:=w-T_{k}(w-u) \in u+W_{0}^{1,G}(B_1)$ as comparison functions in the variational inequalities \eqref{u0} and \eqref{w-v} respectively, then  we have
 \begin{equation}\label{k-}
 \int_{B_1}[a(x,Dw)-a(x,Du)] \cdot D[T_{k}(w-u)]dx \leq k.
 \end{equation}
 Then  for every $k\geqslant1$, we have
\begin{eqnarray*}
\int_{D_k}|Du-Dw|^{1+i_g}dx
&\leq & c\int_{B_1}[a(x,Dw)-a(x,Du)] \cdot D[T_{k}(w-u)]dx+c    \nonumber  \\
&\leq & ck.
\end{eqnarray*}
 Now using Lemma \ref{qianqi}, we obtain
$$\int_{B_1}|Du-Dw|dx \leqslant c$$

Case 2: $\int_{B_1}|f|dx>1$.
We let $$\overline{u}(x)=A^{-1}u(x), \ \ \ \overline{v}(x)=A^{-1}v(x), \ \ \ \overline{w}(x)=A^{-1}w(x),$$
$$\overline{\psi}(x)=A^{-1}\psi(x), \ \ \ \overline{f}(x)=A^{-i_g}f(x), \ \ \ \overline{a}(x,\eta)=A^{-i_g}a(x,A\eta), \ \ \ \overline{G}(t)=\int_{0}^{t}\overline{g}(\tau)d\tau,$$
where $A=(\int_{B_1}|f|dx)^{\frac{1}{i_g}}>1$.
Then we can easily obtain $\int_{B_1}|\overline{f}|dx=1$, $\overline{a}$ satisfies \eqref{a(x)1} and $\overline{g}$ satisfies \eqref{a(x)3}.

Moreover,
$\overline{u}\in W^{1,G}(B_1)$ with $\overline{u} \geq \overline{\psi}$ solves the variational inequality
\begin{equation*}
\int_{B_1} \overline{a}(x,D\overline{u})\cdot D(\overline{v}-\overline{u})dx \geq \int_{B_1} (\overline{v}-\overline{u}) \overline{f}dx
\end{equation*}
for any  $\overline{v} \in \overline{u}+W_0^{1,G}(B_1)$ that  satisfy $\overline{v} \geq \overline{\psi} $ a.e.  on $B_1$.

And
$\overline{w}\in \overline{u}+W_{0}^{1,G}(B_1)$ with $\overline{w} \geq \overline{\psi}$ solves the  inequality
\begin{equation*}
\int_{B_1} \overline{a}(x,D\overline{w})\cdot D(\overline{v}-\overline{u})dx \geq 0.
\end{equation*}

Similar to the case 1 we have
\begin{equation*}
\int_{B_1} |D\overline{u}-D\overline{w}|dx \leqslant c.
\end{equation*}
In conclusion, we have
\begin{equation*}
\int_{B_1}|Du-Dw|dx \leqslant c(\int_{B_1}|f|dx)^{\frac{1}{i_g}},
\end{equation*}
which finishes our proof.
\end{proof}

\begin{corollary}\label{coro}
Assume that conditions  \eqref{a(x)1}-\eqref{a(x)3} are fulfilled, let w be as in Lemma \ref{bijiao} and $\mu \in  \mathcal{M}_{b}(\Omega)$ and u be a limit of approximating solutions for $OP(\psi, \mu)$, in the sense of Definition \ref{opdy}. Then there exists $c=c(n,i_g,s_g,v,L)$ such that
\begin{equation} \label{coro1}
\fint_{B_R(x_0)}|Du-Dw|dx\leqslant c\left[\frac{|\mu|(\overline{B_R}(x_0))}{R^{n-1}}\right]^{\frac{1}{i_g}}.
\end{equation}
\end{corollary}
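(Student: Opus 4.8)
The plan is to derive the estimate from Lemma \ref{bijiao} by approximation, so the whole argument takes place at the level of the approximating sequence and passes to the limit only at the end. By Definition \ref{opdy} there exist $f_i \in (W^{1,G}(\Omega))' \cap L^1(\Omega)$ with $f_i \stackrel{\ast}\rightharpoonup \mu$ and $\limsup_{i\to\infty}\int_{B_R(x_0)}|f_i|\,dx \leq |\mu|(\overline{B_R(x_0)})$, together with $u_i \in W^{1,G}(\Omega)$, $u_i \geq \psi$, solving the variational inequalities \eqref{opdy1}, with $u_i \to u$ in $W^{1,1}(\Omega)$ and a.e. on $\Omega$. For each $i$, let $w_i \in u_i + W_0^{1,G}(B_R(x_0))$ with $w_i \geq \psi$ solve the homogeneous obstacle problem \eqref{w-v} with boundary datum $u_i$; in the measure-data setting the comparison map $w$ of Lemma \ref{bijiao} is exactly the limit of this sequence, so it suffices to show that $w_i$ converges and to estimate the limit. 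Applying Lemma \ref{bijiao} to the pair $(u_i,w_i)$ gives, for every $i$,
\[
\fint_{B_R(x_0)}|Du_i - Dw_i|\,dx \leq c\left[ R\fint_{B_R(x_0)}|f_i|\,dx\right]^{1/i_g}.
\]

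Letting $i \to \infty$: on the right-hand side, writing $\fint_{B_R(x_0)}|f_i|\,dx = |B_R(x_0)|^{-1}\int_{B_R(x_0)}|f_i|\,dx$ with $|B_R(x_0)| = c(n)R^n$ and using the $\limsup$ bound from Definition \ref{opdy}, one obtains
\[
\limsup_{i\to\infty} R\fint_{B_R(x_0)}|f_i|\,dx \leq c(n)\,\frac{|\mu|(\overline{B_R(x_0)})}{R^{n-1}},
\]
and, since $t \mapsto t^{1/i_g}$ is increasing, the dimensional factor is absorbed into $c = c(n,i_g,s_g,v,L)$. On the left-hand side, $Du_i \to Du$ in $L^1(B_R(x_0))$ is immediate from $u_i \to u$ in $W^{1,1}(\Omega)$, so the whole argument reduces to showing $Dw_i \to Dw$ in $L^1(B_R(x_0))$, i.e. a stability statement for the homogeneous obstacle problem under the convergence $u_i \to u$ of the boundary data. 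This is the main obstacle.

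To handle it, I would first test \eqref{w-v} for $w_i$ with an admissible competitor built from $u_i$ and $\psi$ (for instance $\psi + T_k(u_i-\psi)$ and letting $k\to\infty$, or $\max$-type truncations), and use Lemma \ref{adaog} and Lemma \ref{ag} to get a uniform bound $\int_{B_R(x_0)}G(|Dw_i|)\,dx \leq c$; by \eqref{lg} this makes $\{w_i\}$ bounded in the reflexive space $W^{1,G}(B_R(x_0))$, so a subsequence converges weakly to some $\tilde w \geq \psi$. Closedness of the constraint set together with the monotonicity of $a(x,\cdot)$ (Minty's trick) identifies $\tilde w$ as a solution of \eqref{w-v} with boundary datum $u$; by uniqueness $\tilde w = w$, so the convergence is along the full sequence. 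Finally, the strict monotonicity estimate of Lemma \ref{adaog} applied to $w_i$ and $w$ — exactly as in the proof of Lemma \ref{bijiao} — upgrades weak convergence of the gradients to strong convergence in $L^1(B_R(x_0))$ (indeed in $L^G$). Passing to the limit in the displayed inequality then yields \eqref{coro1}. I expect the only genuinely delicate point to be the bookkeeping for the admissible competitors, namely keeping the constraint $\geq \psi$ intact when comparing obstacle problems carried by different boundary data; once that is settled the conclusion follows directly from Lemma \ref{bijiao} and the weak-$\ast$ control on $f_i$.
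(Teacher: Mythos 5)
Your overall strategy is the right one, and it matches what the paper has in mind (the paper simply cites the analogous Corollary~4.2 of \cite{xiong1} without repeating the argument): work at the level of the approximating pairs $(u_i,w_i,f_i)$ furnished by Definition~\ref{opdy}, apply Lemma~\ref{bijiao} to get $\fint_{B_R}|Du_i-Dw_i|\,dx\leq c\bigl[R\fint_{B_R}|f_i|\,dx\bigr]^{1/i_g}$ for each $i$, and pass to the limit, controlling the right-hand side via $\limsup_i\int_{B_R}|f_i|\,dx\leq|\mu|(\overline{B_R})$. The gap is in your compactness step for the $w_i$. Testing \eqref{w-v} with an admissible competitor built from $u_i$ and using Lemmas~\ref{adaog}, \ref{ag} and Young's inequality yields an estimate of the form $\int_{B_R}G(|Dw_i|)\,dx\leq c\int_{B_R}G(|Du_i|)\,dx$, and the right-hand side is \emph{not} uniformly bounded: Definition~\ref{opdy} only gives $u_i\to u$ in $W^{1,1}(\Omega)$, not in $W^{1,G}$, and for genuine measure data one cannot expect $u\in W^{1,G}_{\mathrm{loc}}$ (think of a Dirac mass). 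So the claimed uniform bound $\int_{B_R}G(|Dw_i|)\,dx\leq c$ does not follow, and the proposed weak compactness in the reflexive space $W^{1,G}(B_R)$ collapses.

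The remedy is to use the weaker uniform estimate that the proof of Lemma~\ref{bijiao} already delivers. Because $\fint_{B_R}|f_i|\,dx$ is bounded (for large $i$, by the $\limsup$ control), the truncation step \eqref{k-} together with Lemma~\ref{qianqi} gives, after rescaling, a bound $\int_{B_R}|Du_i-Dw_i|^{1+\beta}\,dx\leq C$ uniformly in $i$, for some fixed $\beta>0$ depending only on $n,i_g$. Combined with the fact that $\{Du_i\}$ is equi-integrable (it converges in $L^1$), this makes $\{Dw_i\}$ equi-integrable, hence weakly $L^1$-precompact by Dunford--Pettis; together with a Poincar\'e/Rellich argument for $w_i-u_i\in W_0^{1,1+\beta}(B_R)$ one extracts $w\in W^{1,1}(B_R)$ with $Dw_i\rightharpoonup Dw$ weakly in $L^1$ along a subsequence. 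You then do not need to upgrade to strong convergence: from $Du_i-Dw_i\rightharpoonup Du-Dw$ weakly in $L^1$ and weak lower semicontinuity of the $L^1$-norm,
\begin{equation*}
\fint_{B_R}|Du-Dw|\,dx\leq\liminf_{i\to\infty}\fint_{B_R}|Du_i-Dw_i|\,dx\leq c\left[\frac{|\mu|(\overline{B_R})}{R^{n-1}}\right]^{1/i_g},
\end{equation*}
which is \eqref{coro1}. The Minty-type identification of the limit is still needed if one wants to know that $w$ solves the limiting homogeneous obstacle problem (which matters for the way the comparison maps are chained in Lemma~\ref{zongjie}), but it is not required for the inequality \eqref{coro1} itself.
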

The proof of corollary \ref{coro} is similar to the proof corollary 4.2 in \cite{xiong1}, so we don't repeat it here.

Next, we prove the Caccioppoli's inequality for homogeneous obstacle problems.
\begin{lemma}\label{caccio}
Assume that conditions  \eqref{a(x)1}-\eqref{a(x)3} are fulfilled, let $B_{R}(x_0)\subseteq \Omega, \psi \in W^{1,G}(B_{R}(x_0))$ and $u \in W^{1,G}(B_R(x_0)) $ with $u\geqslant \psi$ solves the inequality
\begin{equation}\label{vu0}
\int_{B_R(x_0)}a(x,Du)\cdot D(v-u)dx\geqslant 0
\end{equation}
for any $v \in u+W_0^{1,G}(B_R(x_0))$ with $v\geqslant \psi \ a.e. \ on \ B_{R}(x_0)$.
Then there exists $c=c(n,i_g,s_g,v,L)>0$ such that
\begin{equation}
\fint_{B_{\frac{R}{2}}(x_0)}G(|Du|)dx\leqslant c\fint_{B_R(x_0)}G\left( \frac{|u-\lambda|}{R}\right) dx+c\fint_{B_R(x_0)}\left[ G\left( \frac{|\psi|}{R}\right) +G(|D\psi|)\right] dx
\end{equation}
for every $\lambda \geqslant0.$
\end{lemma}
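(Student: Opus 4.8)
The plan is to use the defining variational inequality \eqref{vu0} with a test function built from a standard cut-off function times $(u-\lambda)$, suitably corrected so that the competitor still lies above the obstacle $\psi$. First I would fix $\lambda\geqslant 0$ and a cut-off $\eta\in C_0^\infty(B_R(x_0))$ with $0\leqslant\eta\leqslant1$, $\eta\equiv1$ on $B_{R/2}(x_0)$ and $|D\eta|\leqslant c/R$. The natural choice $v=u-\eta(u-\lambda)$ is not admissible since it need not satisfy $v\geqslant\psi$; the way around this, going back to the standard treatment of obstacle problems, is to put
\begin{equation*}
v=u-\eta(u-\lambda)+\eta(\psi-\lambda)^{+}=u-\eta\,(u-\psi)+\eta(\lambda-\psi)^{-},
\end{equation*}
or more simply $v=u-\eta\,\min\{u-\lambda,\;u-\psi\}$, which is $\geqslant\psi$ because on the set where the minimum equals $u-\lambda$ we have $u-v=\eta(u-\lambda)\leqslant\eta(u-\psi)$, and elsewhere $v=u-\eta(u-\psi)\geqslant\psi$. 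With this $v$ we have $v\in u+W_0^{1,G}(B_R(x_0))$ and $v\geqslant\psi$, so $v$ is admissible in \eqref{vu0}.

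Next I would insert this $v$ into \eqref{vu0}, so that $D(v-u)=-D[\eta(u-\lambda)]+D[\eta(\psi-\lambda)^{+}]$ (or the analogous expression for the $\min$-formulation), and expand $D[\eta(u-\lambda)]=\eta Du+(u-\lambda)D\eta$. Using the structure conditions \eqref{a(x)1}, namely $|a(x,Du)|\leqslant Lg(|Du|)$ together with the lower bound $a(x,Du)\cdot Du\geqslant cG(|Du|)$ from Lemma~\ref{adaog}, I would absorb the good term $\int \eta\, a(x,Du)\cdot Du\,dx \gtrsim \int \eta\, G(|Du|)\,dx$ on the left and estimate the remaining terms on the right. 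The terms involving $(u-\lambda)D\eta$ and $(\psi-\lambda)^+D\eta$, $\eta D\psi$ are handled by Young's inequality for the $N$-function $G$ (Lemma~\ref{gyoung} and the relation \eqref{a(x)4}, $G^*(g(t)/\,)\lesssim G(t)$): a typical product $g(|Du|)\,|u-\lambda|\,|D\eta|$ is bounded by $\varepsilon\, G^*(g(|Du|))+c(\varepsilon)\,G(|u-\lambda||D\eta|)\leqslant \varepsilon c\,G(|Du|)+c(\varepsilon)\,G(|u-\lambda|/R)$, where I also use the $\triangle_2$/subadditivity of $G$ (Remark~\ref{remark1}) and $|D\eta|\leqslant c/R$. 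Choosing $\varepsilon$ small absorbs the $G(|Du|)$ contributions into the left-hand side.

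At this point one arrives at an inequality of the form
\begin{equation*}
\int_{B_{R/2}(x_0)}G(|Du|)\,dx\leqslant \theta\int_{B_{R}(x_0)}G(|Du|)\,dx+c\int_{B_R(x_0)}\Bigl[G\bigl(\tfrac{|u-\lambda|}{R}\bigr)+G\bigl(\tfrac{|\psi|}{R}\bigr)+G(|D\psi|)\Bigr]\,dx,
\end{equation*}
except that to make the absorption rigorous I would run the argument between two concentric balls $B_{r}(x_0)\subset B_{s}(x_0)\subset B_R(x_0)$ with $R/2\leqslant r<s\leqslant R$, using a cut-off supported in $B_s$ and equal to $1$ on $B_r$ with $|D\eta|\leqslant c/(s-r)$. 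This produces the hypothesis of the iteration Lemma~\ref{diedai} (with $C$ collecting the $\lambda$- and $\psi$-terms, and the $A,B$-terms the $G(|u-\lambda|/(s-r))$ contributions, which by $\triangle_2$ are controlled by $(s-r)^{-(1+s_g)}$ times a fixed integral), and an application of that lemma removes the term with $\theta$ and yields the estimate on $B_{R/2}(x_0)$. Dividing by $|B_{R/2}(x_0)|\sim|B_R(x_0)|$ converts the integrals to averages and gives the stated inequality.

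The main obstacle is the obstacle itself: ensuring the competitor stays admissible ($v\geqslant\psi$) while still being a genuine perturbation that isolates $\int\eta\,G(|Du|)$, and then carrying the extra $D\psi$-terms it generates through the Young-inequality estimates without losing the correct $G$-dependence. Everything else—the absorption via Young's inequality tuned to the $N$-function $G$, the use of $\triangle_2\cap\nabla_2$, and the hole-filling iteration—is routine given the lemmas already established in Section~\ref{section2} and the monotonicity bounds of Lemma~\ref{adaog}.
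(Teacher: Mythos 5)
Your plan matches the paper's proof in strategy and in all the technical ingredients: an obstacle-respecting test function of the form $v=u-\eta(u-\lambda)+\eta(\text{correction})$, absorption via Young's inequality for the $N$-function $G$ together with $G^{*}(g(t))\lesssim G(t)$, and the hole-filling iteration of Lemma~\ref{diedai}. The only cosmetic difference is the correction term: the paper uses $+\eta\psi$ (so $v-\psi=(1-\eta)(u-\psi)+\eta\lambda\geqslant0$), while you use $+\eta(\psi-\lambda)^{+}$, i.e.\ $v=u-\eta\min\{u-\lambda,u-\psi\}$, which is equally admissible and leads to the same $G(|\psi|/R)$ and $G(|D\psi|)$ error terms.
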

\begin{proof}
Without loss of generality we may assume that $x_0=0$, let $\eta \in C_{0}^{\infty}(B_s)$,  $\eta=1$ on $B_{t}$,  $|D\eta|\leqslant \frac{c}{s-t}$, $0\leqslant\eta \leqslant1$, \ for any $0<t<s\leqslant R$. We take $v=u-\eta (u-\lambda)+\eta \psi \geqslant \psi$ a.e. on  $B_{R}$ as testing function for the inequality \eqref{vu0}, then we have
\begin{equation*}
\int_{B_{s}}a(x,Du) \cdot D(-\eta (u-\lambda)+\eta \psi)dx \geqslant 0,
\end{equation*}
which implies that
\begin{equation*}
\int_{B_{s}}a(x,Du) \cdot (Du) \eta dx \leqslant -\int_{B_s}a(x,Du)\cdot (D\eta) (u-\lambda) dx+\int_{B_s}a(x,Du)\cdot D (\eta\psi)dx.
\end{equation*}
Then combing  Young's inequality with Lemma \ref{ag}, Lemma \ref{gwan}, Lemma \ref{adaog}, we have
\begin{eqnarray*}
\int_{B_t}G(|Du|)dx&\leqslant& \int_{B_s}G(|Du|)\eta dx \\
&\leqslant& c\int_{B_s}a(x,Du)\cdot (Du)\eta dx \\
&\leqslant& c\int_{B_s}|a(x,Du)|| D\eta||u-\lambda| dx+c\int_{B_s}|a(x,Du)||D(\eta \psi)| dx \\
&\leqslant& c \int_{B_s}g(|Du|)\frac{|u-\lambda|}{s-t}dx+c\int_{B_s}g(|Du|)|D\psi|dx+c \int_{B_s}g(|Du|)\frac{|\psi|}{s-t}dx \\
&\leqslant& c\varepsilon \int_{B_s}G^{\ast}(g(|Du|))dx+c(\varepsilon)\int_{B_s}G(|D\psi|)dx
 \\
&+&c(\varepsilon)\int_{B_s}G\left( \frac{|u-\lambda|}{s-t}\right)+G\left( \frac{|\psi|}{s-t}\right)dx \\
&\leqslant& c\varepsilon \int_{B_s}G(|Du|)dx+c(\varepsilon)\int_{B_s}G(|D\psi|)dx
\\&+&c(\varepsilon)\int_{B_s}\left(\frac{R}{s-t} \right) ^{c_1}G\left( \frac{|u-\lambda|}{R}\right)+\left(\frac{R}{s-t} \right) ^{c_1}G\left( \frac{|\psi|}{R}\right)dx.
\end{eqnarray*}
Now we take $\varepsilon$ small enough to get $c\varepsilon \leqslant \frac{1}{2}$, then we make use of Lemma \ref{diedai}, we obtain
\begin{equation}
\fint_{B_{\frac{R}{2}}}G(|Du|)dx\leqslant c\fint_{B_R}\left[ G\left( \frac{|u-\lambda|}{R}\right)+G\left( \frac{|\psi|}{R}\right)\right]  dx+c\fint_{B_{R}}G(|D\psi|)dx.
\end{equation}
\end{proof}
To obtain Lemma \ref{reverse}, we need a new Sobolev type inequality that can be found in \cite{cf1}.
\begin{lemma}\label{zhongjie}
Let $G(t)$ is defined in \eqref{g}. Set

\begin{center}
$S(t):=G(t)\left[\frac{G(t)}{t} \right] ^{-\frac{1}{n}}$ \ \ \ \ for \ \ $t>0$.
\end{center}
Then there exists a constant c depending only on n such that
\begin{equation*}
G^{-1}\left( \fint_{B_R(x)}G\left( \frac{|u-m(u)|}{R}\right) d\xi\right) \leqslant cS^{-1}\left( \fint_{B_R(x)}S(c|Du|)d\xi \right)
\end{equation*}
for any $B_R(x)\subset \mathbb{R}^{n}$ and every weakly differentiable function $u: B_R(x)\rightarrow \mathbb{R}$, where
\begin{center}
$m(u):=\sup \left\lbrace t\in\mathbb{R} :|{y\in B_R(x)  : u(y)>t}| >\frac{|B_R(x)|}{2}\right\rbrace, $
\end{center}
the largest median of u, and $|\cdot|$ denotes Lebesgue measure.
\end{lemma}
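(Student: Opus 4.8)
The plan is to read the claimed estimate as a (non-sharp) instance of the optimal Orlicz--Sobolev embedding of Cianchi \cite{cf1}, and to obtain it by symmetrization. First I would normalize: replacing $u$ by $\hat u(\xi)=u(R\xi+x)/R$ turns $B_R(x)$ into the unit ball $B_1$, sends $|Du|$ to $|Du(R\xi+x)|$, and leaves both sides of the inequality invariant (the factor $R^{-1}$ inside $G$ on the left is precisely what compensates the change of variables), so it suffices to treat $R=1$ and $x=0$. Next I would set $v:=u-m(u)$, so that $0$ is the largest median of $v$ and $|Dv|=|Du|$; this yields the key ``half of the ball'' property
\begin{equation*}
|\{\xi\in B_1:v(\xi)>t\}|\le\tfrac12|B_1|\quad\text{and}\quad|\{\xi\in B_1:v(\xi)<-t\}|\le\tfrac12|B_1|\qquad\text{for all }t>0,
\end{equation*}
which is exactly what upgrades a Sobolev inequality to a Poincaré-type one: each of $v_+:=\max\{v,0\}$ and $v_-:=\max\{-v,0\}$ vanishes on a subset of $B_1$ of measure at least $|B_1|/2$.

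The heart of the argument is a reduction to one dimension. Let $\varphi$ be the decreasing rearrangement of $v_+$ on $[0,|B_1|]$ (the case of $v_-$ is symmetric); by the median property $\varphi\equiv0$ on $[|B_1|/2,|B_1|]$. Equimeasurability gives $\int_{B_1}G(v_+)\,d\xi=\int_0^{|B_1|}G(\varphi(s))\,ds$, while the coarea formula together with the relative isoperimetric inequality in $B_1$ (whose profile is, up to a dimensional constant, $s\mapsto s^{1-1/n}$ on the range of measures $\le|B_1|/2$ produced by the level sets of $v_+$) yields the Pólya--Szegő-type inequality
\begin{equation*}
\int_{B_1}S(|Dv_+|)\,d\xi\ \ge\ \int_0^{|B_1|}S\!\bigl(c(n)\,s^{1-\frac1n}|\varphi'(s)|\bigr)\,ds,
\end{equation*}
which only uses the convexity of $S$. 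Since $G(|v|)=G(v_+)+G(v_-)$ and $S(|Dv_+|)+S(|Dv_-|)\le S(|Dv|)$ pointwise (superadditivity of the convex $S$, vanishing at $0$, combined with the disjoint supports of $Dv_\pm$), the whole statement reduces to the one--dimensional weighted Hardy inequality
\begin{equation*}
G^{-1}\!\left(\ \fint_0^{|B_1|}G(\varphi(s))\,ds\right)\ \le\ c(n)\,S^{-1}\!\left(\ \fint_0^{|B_1|}S\!\bigl(c(n)\,s^{1-\frac1n}|\varphi'(s)|\bigr)\,ds\right)
\end{equation*}
for decreasing $\varphi$ with $\varphi\equiv0$ on $[|B_1|/2,|B_1|]$.

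This one--dimensional inequality is exactly the point for which the particular Young function $S(t)=G(t)\bigl(G(t)/t\bigr)^{-1/n}=G(t)^{\frac{n-1}{n}}t^{\frac1n}$ is designed. I would write $\varphi(s)=\int_s^{|B_1|}|\varphi'(r)|\,dr=\int_s^{|B_1|}\bigl(r^{1-\frac1n}|\varphi'(r)|\bigr)\,r^{\frac1n-1}\,dr$, distribute the weight $r^{1/n-1}$, pair it against $S$ via Young's inequality in Orlicz form (Lemma \ref{gyoung}) and the self--improvement $S^{\ast}(S(t)/t)\le S(t)$ from \eqref{a(x)4}, and use the Hardy--Littlewood--Pólya rearrangement inequality to control the iterated integral; in the sharp formulation this is the classical Bliss optimization, and here, with the non-optimal target $G$, it closes with room to spare. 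The quantitative index bounds of Lemma \ref{ag} — equivalently the $\triangle_2$/$\bigtriangledown_2$ and $N$-function structure of $G$ (Lemma \ref{gwan}) — are what keep every multiplicative constant dependent on $n$ alone. (Alternatively, one can avoid symmetrization entirely: split $v_\pm$ into the dyadic truncations $\min\{(v_\pm-2^k)_+,2^k\}$, apply to each the $W^{1,1}$-Sobolev inequality for functions vanishing on half of $B_1$, and sum the resulting discrete Hardy inequality — the recombination again forces precisely the exponents occurring in $S$.)

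The main obstacle is this one--dimensional step: because $S$ and $G$ have genuinely different growth, one must track the behaviour of $S$, $S^{\ast}$ and $G$ at both small and large arguments (this is where Lemma \ref{ag} is indispensable) and check that the endpoint contributions near $s=0$ and $s=|B_1|/2$ do not spoil the bound. Two further, routine, points deserve a line. First, one verifies at the outset that $S$ is a bona fide Young function (convex, $S(0)=0$): convexity follows since $G$ is an $N$-function with $i_g\ge1$, so $S$ has lower index $\tfrac{(n-1)(1+i_g)+1}{n}>1$ and the computation goes through (equivalence to a convex function would already suffice in Orlicz theory). Second, one removes the a priori unboundedness of $v$ by first establishing the inequality for each truncation $T_k(v)$ and then letting $k\to\infty$, using the $\triangle_2$ property and monotone convergence. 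All of this is carried out in detail in \cite{cf1}.
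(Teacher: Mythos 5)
The paper does not prove this lemma: it is imported without proof from Cianchi and Fusco \cite{cf1}, as the sentence immediately preceding the statement makes explicit, so there is no in-paper argument to compare yours against. Your sketch is nonetheless a plausible reconstruction of the symmetrization proof that \cite{cf1} relies on: normalize to the unit ball, split around the largest median so that each of $v_\pm$ vanishes on at least half the ball, pass to the decreasing rearrangement, combine the coarea formula, Jensen's inequality (using the convexity of $S$) and the relative isoperimetric inequality in $B_1$ to arrive at a one--dimensional functional carrying the weight $s^{1-1/n}$, and close with a weighted Hardy--type inequality for which the Young function $S(t)=G(t)^{(n-1)/n}t^{1/n}$ is exactly designed. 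That skeleton is sound.

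The one point you should tighten is your appeal to Lemma \ref{ag} (hence to $i_g,s_g$) ``to keep every multiplicative constant dependent on $n$ alone'': that is backwards. The lemma asserts $c=c(n)$ with no index dependence, and the Cianchi--Fusco inequality is in fact valid for an arbitrary Young function $G$ with purely dimensional constants; the inner constant in $S(c|Du|)$ exists precisely so that no $\Delta_2$, $\nabla_2$, or index hypothesis on $G$ or $S$ is ever invoked. If your one--dimensional Hardy step genuinely needed $i_g,s_g$ (for instance through the $\varepsilon$--version of Young's inequality in Lemma \ref{gyoung}, which does require $\Delta_2\cap\nabla_2$), the resulting constant would depend on those indices and the stated lemma would not follow; an index-free version of that step, as in \cite{cf1}, is what is actually required. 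Beyond that caveat your proposal reads as a condensed account of the reference, which is consistent with how the paper itself treats the lemma.
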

Next, we give the proof of the Reverse H$\ddot{o}$lder's inequality to deduce Lemma \ref{diyi}.
\begin{lemma}\label{reverse}
Under the  conditions  \eqref{a(x)1}-\eqref{a(x)3}, we assume that $B_R(x_0)\subseteq \Omega$, $u\geqslant 0, \psi \in W^{1,G}(B_{R}(x_0))$ and $u \in W^{1,G}(B_R(x_0))$ with $u\geqslant \psi$ solves the inequality
\begin{equation*}
\int_{B_R(x_0)}a(x,Du)\cdot D(v-u)dx\geqslant 0
\end{equation*}
for any $v \in u+W_0^{1,G}(B_R(x_0))$ with $v\geqslant \psi \ a.e. \ on \ B_{R}(x_0)$.
Then there exists $c=c(n,i_g,s_g,v,L)>0$ such that
\begin{equation}
\fint_{B_{\frac{3R}{4}}(x_0)}G(|Du|)dx\leqslant cG\left( \fint_{B_R(x_0)}|Du|dx\right)+c\fint_{B_{R}(x_0)}\left[ G(|D\psi|)+G(|\psi|)\right] dx.
\end{equation}
\end{lemma}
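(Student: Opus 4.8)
The plan is to combine the Caccioppoli inequality of Lemma \ref{caccio} with the Sobolev--Poincaré inequality of Lemma \ref{zhongjie} and a standard iteration argument. First I would fix $B_R(x_0)\subseteq\Omega$ and, for radii $\tfrac{3R}{4}\leqslant t<s\leqslant R$, apply Lemma \ref{caccio} on the ball $B_t(x_0)$ (rescaled appropriately so that the pair of radii is $(t,s)$ rather than $(R/2,R)$) with the choice $\lambda=m(u)$, the largest median of $u$ on $B_s(x_0)$ appearing in Lemma \ref{zhongjie}. This yields
\[
\fint_{B_t(x_0)}G(|Du|)\operatorname{d}\!\xi\leqslant c\fint_{B_s(x_0)}G\!\left(\frac{|u-m(u)|}{s-t}\right)\operatorname{d}\!\xi+c\fint_{B_s(x_0)}\Bigl[G\!\left(\tfrac{|\psi|}{s-t}\right)+G(|D\psi|)\Bigr]\operatorname{d}\!\xi .
\]
The term with $|\psi|/(s-t)$ is controlled by $\bigl(\tfrac{R}{s-t}\bigr)^{1+s_g}G(|\psi|)$ using the $\Delta_2$-type growth bounds of Lemma \ref{ag}, so it is already in the desired form up to a harmless power of $R/(s-t)$; the only genuinely delicate term is the oscillation term $\fint G(|u-m(u)|/(s-t))$.

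Next I would estimate that oscillation term. Writing $G(|u-m(u)|/(s-t))=G\bigl(\tfrac{R}{s-t}\cdot\tfrac{|u-m(u)|}{R}\bigr)\leqslant \bigl(\tfrac{R}{s-t}\bigr)^{1+s_g}G(|u-m(u)|/R)$ by Lemma \ref{ag}, it remains to bound $\fint_{B_s(x_0)}G(|u-m(u)|/R)\operatorname{d}\!\xi$. Here Lemma \ref{zhongjie} is the key tool: it gives
\[
G^{-1}\!\left(\fint_{B_s(x_0)}G\!\left(\frac{|u-m(u)|}{s}\right)\operatorname{d}\!\xi\right)\leqslant c\,S^{-1}\!\left(\fint_{B_s(x_0)}S(c|Du|)\operatorname{d}\!\xi\right),
\]
with $S(t)=G(t)[G(t)/t]^{-1/n}$. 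Since $S$ is, by construction, a Young function whose lower and upper indices are strictly less than those of $G$ (one loses $1/n$ in the index count), the function $t\mapsto S(t^{(1+i_g)/\cdots})$ or rather $S$ composed with an appropriate power is concave in the relevant regime; this lets me pull the average inside via Jensen's inequality to obtain $S^{-1}\bigl(\fint S(c|Du|)\bigr)\leqslant c\fint|Du|\,\operatorname{d}\!\xi$ after using that $S^{-1}\circ S$-type manipulations with Lemma \ref{ag} convert everything back to $G$. Concretely, the chain is: apply $G$ to both sides, use that $G\circ S^{-1}$ grows slower than linearly on the scale of the average (again an index computation via Lemma \ref{ag}), and conclude $\fint G(|u-m(u)|/s)\operatorname{d}\!\xi\leqslant cG\bigl(\fint|Du|\operatorname{d}\!\xi\bigr)$. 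Replacing $s$ by $R$ costs only a factor $(R/s)^{1+s_g}\leqslant (4/3)^{1+s_g}$.

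Combining these estimates gives, for all $\tfrac{3R}{4}\leqslant t<s\leqslant R$,
\[
\fint_{B_t(x_0)}G(|Du|)\operatorname{d}\!\xi\leqslant \left(\frac{R}{s-t}\right)^{1+s_g}\!\!\left[cG\!\left(\fint_{B_R(x_0)}|Du|\operatorname{d}\!\xi\right)+c\fint_{B_R(x_0)}\bigl(G(|D\psi|)+G(|\psi|)\bigr)\operatorname{d}\!\xi\right],
\]
but strictly speaking the right-hand side still contains $\fint_{B_s}|Du|$ rather than $\fint_{B_R}|Du|$; this is resolved by noting $\fint_{B_s}|Du|\leqslant (R/s)^n\fint_{B_R}|Du|$, so no reabsorption of $\fint_{B_s}G(|Du|)$ is actually needed and the statement follows directly. (If one instead sets up Caccioppoli so that the right-hand gradient term is $\fint_{B_s}G(|Du|)$ with a small constant, then Lemma \ref{diedai} handles the absorption; I would present whichever is cleaner.) The main obstacle is the second paragraph's step: justifying rigorously that the Sobolev term $S^{-1}(\fint S(c|Du|))$ can be linearized to $c\fint|Du|$, i.e.\ carrying out the index bookkeeping for $S$ and applying Jensen correctly, since $S$ is only implicitly defined through $G$ and one must check it is a Young function with the right $\Delta_2\cap\nabla_2$ behaviour before invoking concavity.
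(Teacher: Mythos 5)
Your first step (Caccioppoli on $B_t(x_0)$ with $\lambda=m(u)$, feeding the oscillation term into Lemma \ref{zhongjie}) is exactly how the paper begins, but the second paragraph has a genuine gap, and it is in fact the crux of the whole argument. You propose to show $S^{-1}\bigl(\fint_{B_s}S(c|Du|)\,dx\bigr)\leqslant c\fint_{B_s}|Du|\,dx$ by a Jensen/concavity argument, but Jensen's inequality runs in the \emph{opposite} direction here: since $S$ is convex (and it is essentially $G$ with indices lowered by $1/n$, still superlinear for $n\geqslant 2$ and $i_g\geqslant 1$), one has $S\bigl(\fint|Du|\bigr)\leqslant\fint S(|Du|)$, hence $S^{-1}\bigl(\fint S(|Du|)\bigr)\geqslant\fint|Du|$. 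No power rescaling of $S$ makes it concave without destroying the estimate. So the proposed linearization is false as stated, and without it the oscillation term is not controlled by $G\bigl(\fint|Du|\bigr)$ alone.

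The paper does not linearize the Sobolev term; instead it \emph{interpolates}. One writes $S(|Du|)=G(|Du|)^{(n-1)/n}|Du|^{1/n}$ and applies H\"older to get $\fint_{B_s}S(|Du|)\leqslant\bigl(\fint_{B_s}G(|Du|)\bigr)^{(n-1)/n}\bigl(\fint_{B_s}|Du|\bigr)^{1/n}$. Then one builds the auxiliary Young function $E(t):=S(t^n)$, checks via the index conditions \eqref{a(x)3} that $E\in\triangle_2\cap\bigtriangledown_2$, and applies Young's inequality with $\varepsilon$ to split the product into $\varepsilon E^*\bigl[(\fint G(|Du|))^{(n-1)/n}\bigr]+c(\varepsilon)E\bigl[(\fint|Du|)^{1/n}\bigr]$. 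The crucial computation, using \eqref{a(x)4}, is that $E^*\bigl[(\fint G(|Du|))^{(n-1)/n}\bigr]\leqslant c\,(S\circ G^{-1})\bigl(\fint G(|Du|)\bigr)$, so after applying $G\circ S^{-1}$ the first term becomes $c\varepsilon^{c_1}\fint G(|Du|)$, which must then be \emph{reabsorbed} via the iteration Lemma \ref{diedai} over a family of shrinking radii. Your parenthetical claim that ``no reabsorption is actually needed'' is therefore incorrect: the reabsorption is unavoidable because the Sobolev term necessarily produces a copy of $\fint G(|Du|)$, only with a small coefficient. The alternative you half-sketch in parentheses (Caccioppoli with a small constant on the gradient term plus Lemma \ref{diedai}) is essentially what the paper does, but the small constant does not come from Caccioppoli itself; it comes from this H\"older--Young splitting of $S$. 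Your proof would need to be rewritten along those lines to close.
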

\begin{proof}
Without loss of generality we may assume that $x_0=0, R=1$ by defining
\begin{center}
$\bar{u}(x):=\frac{u(x_0+Rx)}{R}$,\ \ \ $\bar{v}(x):=\frac{v(x_0+Rx)}{R}.$
\end{center}
Now we let
\begin{center}
$S(t):=G(t)\left[\frac{G(t)}{t} \right] ^{-\frac{1}{n}}$ \ \ \ \ for \ \ $t>0$.
\end{center}
From Remark \ref{remark1}, Lemma \ref{caccio} and Lemma \ref{zhongjie}, we obtain
\begin{equation}\label{guodu}
\fint_{B_{\frac{\rho}{2}}(y)}G(|Du|)dx \leqslant c (G \circ S^{-1})\left( \fint_{B_{\rho}(y)}S(|Du|)dx\right) +c\fint_{B_{\rho}(y)}G(|D\psi|)dx+c\fint_{B_{\rho}(y)}G\left(\frac{|\psi|}{\rho}\right)dx.
\end{equation}
Now we let $r\leqslant1$, $\alpha \in (0,1)$ and a point $y \in B_{\alpha r}:=B_{\alpha r}(0)$. We take $\rho=(1-\alpha)r$, note that $B_{(1-\alpha)r}(y)\subset B_{r}$, from \eqref{guodu} we get
\begin{eqnarray*}
\fint_{B_{\frac{(1-\alpha)r}{2}}(y)}G(|Du|)dx &\leqslant& c (G \circ S^{-1})\left( \fint_{B_{(1-\alpha)r}(y)}S(|Du|)dx\right) +c\fint_{B_{(1-\alpha)r}(y)}G(|D\psi|)dx \\
&+&\frac{c}{[(1-\alpha)r]^{1+s_g}}\fint_{B_{(1-\alpha)r}(y)}G(|\psi|)dx.
\end{eqnarray*}
On the other hand, thanks to H$\ddot{o}$lder's inequality, we have
\begin{eqnarray}\label{guodu2}
\fint_{B_{(1-\alpha)r}(y)}S(|Du|)dx&=& \nonumber \fint_{B_{(1-\alpha)r}(y)}[G(|Du|)]^{\frac{n-1}{n}}|Du|^{\frac{1}{n}}dx \\
&\leqslant& \left( \fint_{B_{(1-\alpha)r}(y)}G(|Du|)dx\right) ^{\frac{n-1}{n}}\left( \fint_{B_{(1-\alpha)r}(y)}|Du|dx\right) ^{\frac{1}{n}}.
\end{eqnarray}
Now we consider a Young function  $E(t):=S(t^{n})$ and  its Young conjugate function   $E^{\ast}$. Firstly,  Obviously E(t) is increasing and satisfies \eqref{nhanshu} and $E(0)=0$. Then by calculating, we know that
 $$2n-1\leqslant \frac{tE'(t)}{E(t)}\leqslant (n-1)(s_g+1)+1,$$
whence
 $$\widehat{E}(t):=\int_{0}^{t}\frac{E(s)}{s}ds\simeq E(t)$$
and $\widehat{E}$ is convex, so we  can suppose E convex. And it is easy to  see that E(t) satisfies $\bigtriangleup_{2}$ and $\bigtriangledown_{2}$ conditions. In conclude, E(t) satisfies Young's inequality. We change variable $(s=\sigma^{\frac{1}{n}})$  in the definition of the Young's conjugate function, for $\alpha>0$, $\alpha^{\frac{n-1}{n}}s\geqslant S(s^{n})$,
\begin{equation}\label{exing}
E^{*}(\alpha^{\frac{n-1}{n}})=\sup_{s>0}\alpha^{\frac{n-1}{n}}s-S(s^{n})\leqslant c(n)[\sup_{\sigma>0}\alpha^{n-1}\sigma-[S(\sigma)]^n]^{\frac{1}{n}}:=[F^{*}(\alpha^{n-1})]^{\frac{1}{n}},
\end{equation}
where $F(t):=[S(t)]^{n}$. From \eqref{a(x)4}, we deduce that
\begin{equation*}
F^{*}([G(\tau)]^{n-1})=F^{*}\left( \frac{[S(\tau)]^n}{\tau}\right)=F^{*}\left(\frac{F(\tau)}{\tau} \right)\leqslant F(\tau)=[S(\tau)]^n.
\end{equation*}
Next, we take $\tau=G^{-1}(\alpha)$, then we have
\begin{equation}\label{fwan}
F^{*}(\alpha^{n-1})\leqslant [S(G^{-1}(\alpha))]^{n}.
\end{equation}
Coupling \eqref{exing} with \eqref{fwan} tell us  that for $\alpha=\fint_{B_{(1-\alpha)r}(y)}G(|Du|)dx$,
\begin{equation}\label{fwan2}
E^{*}\left(\left( \fint_{B_{(1-\alpha)r}(y)}G(|Du|)dx\right) ^{\frac{n-1}{n}} \right) \leqslant c(n) (S\circ G^{-1})\left(\fint_{B_{(1-\alpha)r}(y)}G(|Du|)dx \right).
\end{equation}
Note that $t\mapsto (G \circ S^{-1})(t)$ is increasing, then
$$(G \circ S^{-1})(t+s)\leqslant c[(G \circ S^{-1})(s)+(G \circ S^{-1})(t)] \ \ \ \ \ \ \ \ \ \  for \ \ \ s,t\geqslant 0.$$
Thanks to Lemma \ref{ag},  \eqref{guodu2},  \eqref{fwan2} and Young's inequality, we get
\begin{eqnarray*}
\fint_{B_{\frac{(1-\alpha)r}{2}}(y)}G(|Du|)dx &\leqslant& c (G \circ S^{-1})\left( \fint_{B_{(1-\alpha)r}(y)}S(|Du|)dx\right) +c\fint_{B_{(1-\alpha)r}(y)}G(|D\psi|)dx \\
&+&c[(1-\alpha)r]^{-1-s_g}\fint_{B_{(1-\alpha)r}(y)}G(|\psi|)dx \\
&\leqslant& c(G \circ S^{-1})\left[ \left( \fint_{B_{(1-\alpha)r}(y)}G(|Du|)dx\right) ^{\frac{n-1}{n}}\left( \fint_{B_{(1-\alpha)r}(y)}|Du|dx\right) ^{\frac{1}{n}}  \right] \\
&+&c\fint_{B_{(1-\alpha)r}(y)}G(|D\psi|)dx+c[(1-\alpha)r]^{-1-s_g}\fint_{B_{(1-\alpha)r}(y)}G(|\psi|)dx \\
&\leqslant& c(G \circ S^{-1})\left\lbrace \varepsilon E^{*}\left[ \left( \fint_{B_{(1-\alpha)r}(y)}G(|Du|)dx\right) ^{\frac{n-1}{n}}\right]\right\rbrace \\
&+& c(G \circ S^{-1})\left\lbrace c(\varepsilon) E \left[\left( \fint_{B_{(1-\alpha)r}(y)}|Du|dx\right) ^{\frac{1}{n}}  \right]  \right\rbrace \\
&+&c\fint_{B_{(1-\alpha)r}(y)}G(|D\psi|)dx+c[(1-\alpha)r]^{-1-s_g}\fint_{B_{(1-\alpha)r}(y)}G(|\psi|)dx \\
&\leqslant& c \varepsilon^{c_1} \fint_{B_{(1-\alpha)r}(y)}G(|Du|)dx+c G\left( \fint_{B_{(1-\alpha)r}(y)}|Du|dx \right)  \\
&+&c\fint_{B_{(1-\alpha)r}(y)}G(|D\psi|)dx+c[(1-\alpha)r]^{-1-s_g}\fint_{B_{(1-\alpha)r}(y)}G(|\psi|)dx \\
&\leqslant& c \varepsilon^{c_1} \fint_{B_{(1-\alpha)r}(y)}G(|Du|)dx+c[(1-\alpha)r]^{-n(1+s_g)}G\left( \int_{B_{(1-\alpha)r}(y)}|Du|dx \right)  \\
&+&c\fint_{B_{(1-\alpha)r}(y)}G(|D\psi|)dx+c[(1-\alpha)r]^{-1-s_g}\fint_{B_{(1-\alpha)r}(y)}G(|\psi|)dx. \\
\end{eqnarray*}
Since $y \in B_{\alpha r}$, the ball $B_{\alpha r}$ can be covered by some balls included in $B_r$ such that only a finite and independent of $\alpha$ number of balls of double radius intersect,  then we have
\begin{eqnarray*}
\int_{B_{\alpha r}}G(|Du|)dx&\leqslant& c \varepsilon^{c_1}\int_{B_{r}}G(|Du|)dx+c[(1-\alpha)r]^{-ns_g}G\left( \int_{B_1}|Du|dx \right) \\
&+&c\int_{B_{1}}G(|D\psi|)dx+c[(1-\alpha)r]^{-1-s_g}\int_{B_1}G(|\psi|)dx.
\end{eqnarray*}
We make use of Lemma \ref{diedai} to get
\begin{equation*}
\int_{B_{\frac{3}{4}}}G(|Du|)dx\leqslant c G\left( \int_{B_1}|Du|dx\right) +c\int_{B_{1}}[G(|D\psi|)+G(|\psi|)]dx,
\end{equation*}
which finishes our proof.
\end{proof}
In order to prove Lemma \ref{diyi}, we introduce the following Lemma \ref{nibu}. Since we have shown the Caccioppoli's inequality (that is, Lemma \ref{caccio}), then we can get Lemma \ref{nibu} by combining [\cite{de1}, Theorem 7] and [\cite{de1}, Proposition 6], for more comments see [\cite{de1}, Theorem 9].
\begin{lemma}\label{nibu}
We assume that $u\in W^{1,G}(B_{R}(x_0))$ solves the inequality \eqref{vw1}, then we have
\begin{equation*}
\left( \fint_{B_{R}(x_0)}G(|Du|)^{\gamma}dx\right) ^{\frac{1}{\gamma}}\leqslant c\fint_{B_{\frac{3R}{2}}(x_0)}G(|Du|)dx
\end{equation*}
for some $c>0$ and $\gamma>1$, depending only on $n, i_g, s_g, v$ and $L$.
\end{lemma}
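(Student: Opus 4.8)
The plan is to derive a reverse Hölder inequality for $G(|Du|)$ from the Caccioppoli inequality already established in Lemma \ref{caccio}, following the Gehring-type self-improving mechanism. First I would fix a ball $B_{r}(y) \subseteq B_{\frac{3R}{2}}(x_0)$ and apply Lemma \ref{caccio} on $B_r(y)$ (with $\psi$ contributing lower-order terms; in the homogeneous case considered here the obstacle-related integrals are absorbed or estimated exactly as in Lemma \ref{reverse}). This gives a control of $\fint_{B_{r/2}(y)} G(|Du|)\,dx$ by $\fint_{B_r(y)} G(|Du|/r)\,dx$ plus data terms. Using Lemma \ref{ag} to compare $G(|Du|/r)$ with $r^{-1-s_g}G(|Du|)$ is not quite enough to close the argument directly, so the crucial intermediate step is a Sobolev–Poincaré inequality adapted to the Orlicz setting: invoking Lemma \ref{zhongjie} together with the function $S$ defined there, one converts $\fint_{B_r(y)} G(|Du-m(u)|/r)\,dx$ into $S^{-1}\big(\fint_{B_r(y)} S(c|Du|)\,dx\big)$, which, since $S$ grows strictly slower than $G$ (because $S(t)=G(t)[G(t)/t]^{-1/n}$), produces a genuine gain of integrability exponent on the right-hand side.

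Next I would combine the Caccioppoli inequality with this Orlicz Sobolev–Poincaré estimate to obtain a reverse-Hölder-with-increasing-support inequality of the form
\begin{equation*}
\fint_{B_{r/2}(y)} G(|Du|)\,dx \leqslant c\,\Phi\!\left( \fint_{B_r(y)} \Phi^{-1}\big(G(|Du|)\big)\,dx\right) + (\text{data}),
\end{equation*}
where $\Phi$ is built from $S$ and $G$ and satisfies $\Phi^{-1}\circ G$ strictly sublinear; the $\triangle_2$/$\nabla_2$ properties of $G$ (Lemma \ref{gwan}) and the two-sided bounds of Lemma \ref{ag} guarantee all the auxiliary Young functions lie in $\triangle_2\cap\nabla_2$, so that Young's inequality with $\varepsilon$ (Lemma \ref{gyoung}) can be used to absorb the bad terms. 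This is exactly the structure that feeds into Gehring's lemma. I would then quote the Orlicz version of Gehring's lemma (as formulated in \cite{de1}, Theorem 7 and Proposition 6): it upgrades the reverse-Hölder-with-increasing-support inequality into the existence of an exponent $\gamma = \gamma(n,i_g,s_g,v,L) > 1$ and a constant $c$ such that
\begin{equation*}
\left( \fint_{B_R(x_0)} G(|Du|)^{\gamma}\,dx\right)^{1/\gamma} \leqslant c\fint_{B_{\frac{3R}{2}}(x_0)} G(|Du|)\,dx.
\end{equation*}

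The main obstacle, as I see it, is not the abstract Gehring iteration — which is by now standard once the hypotheses are in place — but rather the careful bookkeeping needed to verify that the Orlicz Sobolev–Poincaré inequality of Lemma \ref{zhongjie}, when fed the Caccioppoli bound, actually yields a strictly subhomogeneous right-hand side uniformly in the radius $r \leqslant 1$. Concretely, one must check that the composition $G\circ S^{-1}$ (and its conjugate-side analogues $E$, $E^*$, $F$, $F^*$ appearing in the proof of Lemma \ref{reverse}) have indicator exponents bounded away from $1$ and $\infty$ in terms of $n, i_g, s_g$ only, and that the scaling factors $[(1-\alpha)r]^{-\cdots}$ produced along the way are harmless after the covering argument and the final application of the iteration Lemma \ref{diedai}. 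Once that is confirmed, the constants $c$ and $\gamma$ depend only on the structural data $n, i_g, s_g, v, L$, as claimed.
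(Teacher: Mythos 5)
Your proposal follows essentially the same route the paper takes: the paper's proof of Lemma \ref{nibu} is just a citation, combining the Caccioppoli inequality of Lemma \ref{caccio} with [\cite{de1}, Theorem 7] and [\cite{de1}, Proposition 6], which is precisely the Caccioppoli $\to$ Orlicz Sobolev--Poincar\'e $\to$ Gehring chain you sketch.

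The one point that will not go through as written is the parenthetical claim that the obstacle-related integrals in Lemma \ref{caccio} are ``absorbed or estimated exactly as in Lemma \ref{reverse}.'' They cannot be absorbed: the Caccioppoli inequality carries irreducible additive terms of the form $\fint\!\left[G(|D\psi|)+G(|\psi|/r)\right]$, and in the Gehring iteration these persist as a data term on the right-hand side (they survive into the conclusion of Lemma \ref{reverse} as well, precisely because they cannot be absorbed). Carried out rigorously, your argument yields a reverse-H\"older estimate with obstacle contributions added to $\fint_{B_{3R/2}}G(|Du|)\,dx$, rather than the clean inequality the lemma displays. This does not change the overall structure---the paper's own statement has the same omission, since \cite{de1} treats obstacle-free systems, and the downstream use in Lemma \ref{diyi} is unaffected because Lemma \ref{reverse} is applied immediately afterward and re-introduces exactly those terms---but to make the proof watertight you should carry the data term explicitly through the Gehring machinery rather than asserting it disappears.
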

Now we prove the comparison estimate between the solutions of a homogeneous obstacle problem and the solutions of a desired obstacle problem, and it  will be crucial for dealing with Dini-continuous vector fields $a$.
\begin{lemma} \label{diyi}
Under the conditions  \eqref{a(x)1}-\eqref{a(x)3}, we assume that $B_{2R}(x_0)\subseteq \Omega$, $u\geqslant 0, \psi \in W^{1,G}(B_{2R}(x_0))$, and  $u \in W^{1,G}(B_{2R}(x_0))$ with $u\geqslant \psi$ solves the inequality
\begin{equation}\label{vw1}
\int_{B_R(x_0)}a(x,Du)\cdot D(v-u)dx\geqslant 0
\end{equation}
for any $v \in u+W_0^{1,G}(B_R(x_0))$ with $v\geqslant \psi \ a.e. \ on \ B_{R}(x_0)$. Assume that  $w \in W^{1,G}(B_{2R}(x_0))$ with $w \geqslant \psi$ solves the inequality
\begin{equation}\label{vw2}
\int_{B_R(x_0)}\overline{a}_{B_R(x_0)}(Dw)\cdot D(v-w)dx\geqslant 0
\end{equation}
and $w=u$ on $\partial B_{R}(x_0)$.
Then we have
\begin{equation*}
\fint_{B_R(x_0)}|Du-Dw|dx \leqslant  c \omega(R)^{\frac{1}{1+s_g}}\left\lbrace \fint_{B_{2R}(x_0)}|Du|dx+G^{-1}\left[\fint_{B_{2R}(x_0)}[G(|D\psi|)+G(|\psi|)]dx \right] \right\rbrace ,
\end{equation*}
where $c=c(n,i_g,s_g,v,L)$.
\end{lemma}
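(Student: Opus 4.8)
The plan is to compare $u$ and $w$ directly by testing the two variational inequalities \eqref{vw1} and \eqref{vw2} against each other. Since $w = u$ on $\partial B_R(x_0)$ and both $u, w \geq \psi$, the functions $v = u$ (in \eqref{vw2}) and $v = w$ (in \eqref{vw1}) are admissible competitors. Adding the two resulting inequalities, I would obtain
\[
\int_{B_R(x_0)} \bigl[\,\overline{a}_{B_R(x_0)}(Dw) - \overline{a}_{B_R(x_0)}(Du)\,\bigr]\cdot(Du - Dw)\,dx \ \leq\ \int_{B_R(x_0)} \bigl[\,a(x,Du) - \overline{a}_{B_R(x_0)}(Du)\,\bigr]\cdot(Du - Dw)\,dx.
\]
The left-hand side is bounded below by $c\fint_{B_R} G(|Du - Dw|)\,dx$ via the monotonicity estimate of Lemma~\ref{adaog} (applied to the averaged vector field $\overline{a}_{B_R(x_0)}$, which still satisfies \eqref{a(x)1}). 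For the right-hand side, the integrand is controlled by $|a(x,Du) - \overline{a}_{B_R(x_0)}(Du)| \leq \theta(a,B_R(x_0))(x)\, g(|Du|)$ by the very definition of $\theta$, so the task reduces to estimating $\fint_{B_R} \theta(a,B_R(x_0))(x)\, g(|Du|)\,|Du - Dw|\,dx$.

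The key steps, in order, are: (i) apply Young's inequality for the $N$-function $G$ (Lemma~\ref{gyoung}, using $G \in \triangle_2 \cap \nabla_2$ from Lemma~\ref{gwan}) to split $g(|Du|)\,|Du-Dw| \leq \varepsilon\, G^*(g(|Du|)) + c(\varepsilon)\, G(|Du - Dw|) \lesssim \varepsilon\, G(|Du|) + c(\varepsilon)\, G(|Du-Dw|)$, absorbing the $G(|Du-Dw|)$ term on the left after multiplying through by the (bounded) factor $\theta$ — here one must be slightly careful since $\theta$ multiplies both terms, but $\abs{\theta} \leq 2L$ lets the absorption go through for $\varepsilon$ small; (ii) on the remaining term $\fint_{B_R}\theta(a,B_R)\, G(|Du|)\,dx$, apply Hölder's inequality with exponents $\gamma'$ and $\gamma$ (where $\gamma$ is from Lemma~\ref{nibu}) to get $\bigl(\fint_{B_R}\theta^{\gamma'}\bigr)^{1/\gamma'}\bigl(\fint_{B_R} G(|Du|)^{\gamma}\bigr)^{1/\gamma} \leq \omega(R)\cdot c\fint_{B_{3R/2}} G(|Du|)\,dx$ by the reverse Hölder inequality of Lemma~\ref{nibu}; (iii) bound $\fint_{B_{3R/2}} G(|Du|)\,dx$ (after a harmless covering/radius adjustment to fit inside $B_{2R}$) by Lemma~\ref{reverse}, yielding $c\,G\bigl(\fint_{B_{2R}}|Du|\bigr) + c\fint_{B_{2R}}[G(|D\psi|) + G(|\psi|)]\,dx$; (iv) convert the $G$-estimate $\fint_{B_R} G(|Du-Dw|)\,dx \lesssim \omega(R)\,\bigl[G(\fint_{B_{2R}}|Du|) + \fint_{B_{2R}}(G(|D\psi|)+G(|\psi|))\bigr]$ into an $L^1$-estimate on $|Du-Dw|$ using Jensen's inequality and the quantitative bounds of Lemma~\ref{ag} — specifically $\fint|Du-Dw| \leq G^{-1}(\fint G(|Du-Dw|))$ and then $G^{-1}(\omega(R)\, t) \leq \omega(R)^{1/(1+s_g)}\, G^{-1}(t)$ for $\omega(R) \leq 1$, together with $G^{-1}(c\,G(a) + c\,b) \leq c\, a + c\, G^{-1}(b)$ by subadditivity of $G^{-1}$ (which follows from Lemma~\ref{ag} just as $\triangle_2$ gives subadditivity of $G$). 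This produces exactly the claimed bound.

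The main obstacle I anticipate is step (i)–(ii): making the absorption rigorous when the "bad coefficient" $\theta(a,B_R)(x)$ multiplies the term one wants to move to the left-hand side. One cannot simply absorb $c(\varepsilon)\fint \theta\, G(|Du-Dw|)$ into $c\fint G(|Du-Dw|)$ pointwise unless $\theta$ is small, which it is not in general. The fix is to keep $\varepsilon$ independent of $\theta$: write $\fint \theta\, g(|Du|)|Du-Dw| \leq 2L\fint g(|Du|)|Du-Dw|$ first where needed for the absorbed piece, or better, apply Young's inequality as $\theta\, g(|Du|)\,|Du-Dw| \leq \varepsilon\, G(|Du-Dw|) + c(\varepsilon)\, \theta^{\gamma'} G^*(g(|Du|))^{?}$ — more cleanly, use $\theta\, g(|Du|)\,|Du-Dw| \le \tfrac12 c^{-1} G(|Du-Dw|) + c'\, \theta^{1+s_g}\, G(|Du|)$ via Lemma~\ref{ag}'s homogeneity bounds (treating $\theta$ as a scaling factor on $g$), so that after absorption only $\fint \theta^{1+s_g} G(|Du|)$ survives and Hölder with $\gamma'/(1+s_g)$ versus its conjugate against Lemma~\ref{nibu} gives the power $\omega(R)^{1+s_g}$ on $\fint_{B_{3R/2}} G(|Du|)$ — wait, this would give the wrong power; the cleaner route is to split $\theta = \theta^{1/(1+s_g)} \cdot \theta^{s_g/(1+s_g)}$ and use the first factor in a three-term Young inequality. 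Sorting out which splitting gives precisely the exponent $\omega(R)^{1/(1+s_g)}$ in the final conclusion is the delicate bookkeeping; everything else is a routine chain of the cited lemmas.
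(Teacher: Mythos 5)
Your overall scheme (test the two inequalities against one another, reduce via monotonicity and Young's inequality to $\fint\theta\,G(|Du|)$, then H\"older with $\gamma,\gamma'$ against the reverse-H\"older estimate of Lemma~\ref{nibu}, then Lemma~\ref{reverse}, then the $G^{-1}$ homogeneity from Lemma~\ref{ag}) is exactly the paper's, and your step~(iv) correctly identifies that the exponent $\omega(R)^{1/(1+s_g)}$ comes from $G^{-1}(\omega(R)\,t)\leq\omega(R)^{1/(1+s_g)}G^{-1}(t)$ applied after obtaining a $G$-level estimate carrying $\omega(R)$ to the first power. The sign in your first displayed inequality is flipped (the correct form is $\int[\overline a_{B_R}(Du)-\overline a_{B_R}(Dw)]\cdot D(u-w)\leq\int[\overline a_{B_R}(Du)-a(x,Du)]\cdot D(u-w)$), but that is a transcription slip; the paper tests both inequalities with $v=(u+w)/2$, which is equivalent to your choice of $v=u$ in one and $v=w$ in the other.

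The real gap is the absorption step you flag at the end, where you waver between several splittings without settling on one. The correct move is the simplest: apply Young's inequality to the product $\bigl(\theta\,g(|Du|)\bigr)\cdot|Du-Dw|$, placing $\varepsilon$ on $G(|Du-Dw|)$ \emph{alone} and $c(\varepsilon)$ on $G^*(\theta g(|Du|))$, rather than applying Young first and then multiplying by $\theta$. Since $\theta\leq2L$, convexity of $G^*$ on $[0,1]$-scalings (namely $G^*(\lambda s)\leq\lambda\,G^*(s)$ for $0\leq\lambda\leq1$, used with $\lambda=\theta/(2L)$) together with $G^*\in\vartriangle_2$ gives $G^*(\theta g(|Du|))\leq c\,\theta\,G^*(g(|Du|))\leq c\,\theta\,G(|Du|)$ with $\theta$ exactly to the first power; the $\varepsilon$-term then absorbs cleanly because it carries no $\theta$, and H\"older with exponents $\gamma',\gamma$ on $\fint\theta\,G(|Du|)$ produces precisely $\omega(R)\bigl(\fint G(|Du|)^\gamma\bigr)^{1/\gamma}$ as required. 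Your two alternative routes do not work: weighting by $\theta^{1+s_g}$ would require H\"older with exponent $\gamma'/(1+s_g)$, which can fall below $1$ (since $\gamma'$ is only slightly above $1$ while $s_g\geq1$) and in any case would not reproduce the $\theta^{\gamma'}$ power built into the definition of $\omega(R)$; and the $\theta^{1/(1+s_g)}\cdot\theta^{s_g/(1+s_g)}$ splitting is unnecessary once you notice the $1/(1+s_g)$ is supplied at the end by $G^{-1}$, not by any power of $\theta$.
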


\begin{proof}
Without loss of generality we may assume that $x_0=0$. We take $v=\frac{u+w}{2} \in u+W_{0}^{1,G}(B_{R})$ as comparison functions in the variational inequalities \eqref{vw1} and  \eqref{vw2}, which implies that
\begin{equation*}
\int_{B_{R}}[a(x,Du)-\overline{a}_{B_R(x_0)}(Dw)]\cdot D(u-w)dx\leqslant 0.
\end{equation*}
Then from \eqref{a(x)1}, \eqref{a(x)2},  Lemma \ref{adaog} and Lemma \ref{nibu},  we have
\begin{eqnarray*}
\fint_{B_{R}}G(|Du-Dw|)dx&\leqslant& \fint_{B_{R}}[\overline{a}_{B_R}(Du)-\overline{a}_{B_R}(Dw)]\cdot D(u-w)dx \\
&\leqslant& \fint_{B_{R}}[\overline{a}_{B_R}(Du)-a(x,Du)]\cdot D(u-w)dx \\
&\leqslant& \fint_{B_R}\theta(a,B_R)g(|Du|)|Du-Dw|dx \\
&\leqslant& c\varepsilon \fint_{B_R}G(|Du-Dw|)dx+c\fint_{B_R}\theta(a,B_R)G^{*}(g(|Du|))dx \\
&\leqslant& c\varepsilon \fint_{B_R}G(|Du-Dw|)dx+c\left( \fint_{B_R}\theta(a,B_R)^{\gamma'}dx\right)^{\frac{1}{\gamma'}}\left( \fint_{B_R}G(|Du|)^{\gamma}dx\right)^{\frac{1}{\gamma}} \\
&\leqslant& c\varepsilon \fint_{B_R}G(|Du-Dw|)dx+c\omega(R)\fint_{\frac{3R}{2}}G(|Du|)dx,
\end{eqnarray*}
where we used the fact $\theta \leqslant2L$.
Now we choose $\varepsilon$ small  enough to get
\begin{equation*}
\fint_{B_R}G(|Du-Dw|)dx\leqslant c \omega(R) \fint_{B_{\frac{3R}{2}}}G(|Du|)dx.
\end{equation*}
Because G is convex, we obtain
\begin{equation*}
G\left(\fint_{B_R}|Du-Dw|dx \right) \leqslant \fint_{B_R}G(|Du-Dw|)dx.
\end{equation*}
Using Remark \ref{remark1}, Lemma \ref{ag} and  Lemma \ref{reverse}, we conclude
\begin{eqnarray*}
\fint_{B_R}|Du-Dw|dx&\leqslant & G^{-1}\left(c \omega(R)\fint_{B_{\frac{3R}{2}}}G(|Du|) dx\right)  \\
&\leqslant & c \omega(R)^{\frac{1}{1+s_g}}G^{-1}\left( \fint_{B_{\frac{3R}{2}}}G(|Du|) dx\right) \\
&\leqslant & c \omega(R)^{\frac{1}{1+s_g}}\left\lbrace \fint_{B_{2R}}|Du|dx+G^{-1}\left[\fint_{B_{2R}}[G(|D\psi|)+G(|\psi|)]dx \right]\right\rbrace,
\end{eqnarray*}
and the proof is comlpete.

\end{proof}
The following two lemmas show some comparison estimates.  Since lemma 4.4 and lemma 4.5 in \cite{xiong1} give similar results, the two lemmas in our paper can be obtained by modifying their proving process a little.
\begin{lemma}\label{zabj}
Assume that conditions  \eqref{a(x)1}-\eqref{a(x)3} are fulfilled, let  $u \in W^{1,G}(B_R(x_0))$ with $u\geqslant \psi$ solves the inequality
\begin{equation*}
\int_{B_R(x_0)}a(x,Du)\cdot D(v-u)dx\geqslant 0
\end{equation*}
for any $v \in u+W_0^{1,G}(B_R(x_0))$ with $v\geqslant \psi \ a.e. \ on \ B_{R}(x_0)$. Let $w \in u+W_0^{1,G}(B_R(x_0))$  be a weak solution of the equation
\begin{equation*}
-\operatorname{div}\left(  a(x, Dw)\right)=  -\operatorname{div}\left(  a(x, D\psi)\right) \ \ \ \  on \ \ B_R(x_0)
\end{equation*}
and $ \psi \in W^{1,G}(B_R(x_0))\cap W^{2,1}(B_R(x_0)), \ \operatorname{div}\left(  a(x, D\psi)\right) \in L^1(B_R(x_0)).$
Then there exists $c=c(n,i_g,s_g,v,L)>0$ such that
\begin{equation}
\fint_{B_R(x_0)}|Du-Dw|dx\leqslant c\left(R\fint_{B_R(x_0)}(|\operatorname{div}\left(  a(x, D\psi)\right)|+1)dx\right)^{\frac{1}{i_g}}.
\end{equation}
\end{lemma}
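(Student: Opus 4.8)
The plan is to follow the same scaling-and-truncation scheme used in Lemma \ref{bijiao}, replacing the role of the datum $f$ there by $\operatorname{div}(a(x,D\psi))$. First I would reduce to $x_0=0$, $R=1$ by the usual dilation $\widehat{u}(x)=u(Rx+x_0)/R$, etc., which rescales $\operatorname{div}(a(x,D\psi))$ to $R\operatorname{div}(a(x,D\psi))(Rx+x_0)$ and preserves both variational inequalities (the structure conditions \eqref{a(x)1}, \eqref{a(x)3} are invariant). Then I would normalize by setting $A:=\left(\fint_{B_1}(|\operatorname{div}(a(x,D\psi))|+1)\,dx\right)^{1/i_g}$ and rescaling $\overline{u}=A^{-1}u$, $\overline{w}=A^{-1}w$, $\overline{\psi}=A^{-1}\psi$, $\overline{a}(x,\eta)=A^{-i_g}a(x,A\eta)$, exactly as in Case 2 of Lemma \ref{bijiao}; one checks $\overline{a}$ still satisfies \eqref{a(x)1} with the same constants, and after this reduction the right-hand quantity equals $1$, so it suffices to prove $\fint_{B_1}|Du-Dw|\,dx\le c$ under the hypothesis $\fint_{B_1}(|\operatorname{div}(a(x,D\psi))|+1)\,dx\le 1$.

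Under that hypothesis, the key identity comes from testing: since $w$ solves $-\operatorname{div}(a(x,Dw))=-\operatorname{div}(a(x,D\psi))$ weakly with $w-u\in W_0^{1,G}(B_1)$, and $u$ solves the obstacle inequality, I would use $u-w\in W_0^{1,G}(B_1)$ as the test function in the weak formulation for $w$, and (as in Lemma \ref{bijiao}) use $v=\tfrac{u+w}{2}$ as the admissible competitor in the obstacle inequality for $u$ when $u-w\in L^\infty$, or the truncated competitors $u+T_k(w-u)$ otherwise. This produces
\begin{equation*}
\int_{B_1}[a(x,Du)-a(x,Dw)]\cdot D(u-w)\,dx \;\le\; \int_{B_1}[a(x,D\psi)-a(x,Dw)]\cdot D(u-w)\,dx + (\text{obstacle term}),
\end{equation*}
and after moving the $a(x,Dw)$ terms together and integrating by parts on the right, the right side is controlled by $\int_{B_1}|\operatorname{div}(a(x,D\psi))|\,|u-w|\,dx$ plus a term $\int_{B_1}[a(x,D\psi)-a(x,Du)]\cdot D(u-w)\,dx$ that is absorbed via \eqref{a(x)1} and Young's inequality (Lemma \ref{gyoung}, Lemma \ref{gwan}). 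The left side is bounded below by $c\int_{B_1}G(|Du-Dw|)\,dx\ge c\int_{B_1}|Du-Dw|^{1+i_g}\,dx - c$ by Lemma \ref{adaog} and Lemma \ref{ag}.

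The dichotomy $1+i_g>n$ versus $1+i_g\le n$ is handled precisely as in Lemma \ref{bijiao}: in the first case Sobolev embedding gives $\|u-w\|_{L^\infty(B_1)}\le c\|Du-Dw\|_{L^{1+i_g}(B_1)}$, so the term $\int|\operatorname{div}(a(x,D\psi))|\,|u-w|\,dx\le c\|Du-Dw\|_{L^{1+i_g}(B_1)}$ and one divides through to get $\int_{B_1}|Du-Dw|\,dx\le c$; in the second case one runs the truncation argument with $D_k=\{|u-w|\le k\}$, obtains $\int_{D_k}|Du-Dw|^{1+i_g}\,dx\le ck+c$ for $k\ge 1$, and concludes by Lemma \ref{qianqi}. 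Undoing the two rescalings gives the claimed bound with $c=c(n,i_g,s_g,v,L)$. The main obstacle is the careful bookkeeping of the extra term $[a(x,D\psi)-a(x,Du)]\cdot D(u-w)$ coming from the $\psi$-inhomogeneity: it must be split so that the $a(x,Du)\cdot D(u-w)$ piece combines with the left-hand side and the $a(x,D\psi)\cdot D(u-w)$ piece is absorbed into $\int G(|Du-Dw|)$ after Young's inequality, using $G^*(g(|D\psi|))\le cG(|D\psi|)$ and the fact that $\psi\in W^{1,G}$ makes $\int_{B_1}G(|D\psi|)\,dx$ finite and, after the $A$-normalization, harmless; everything else is a direct transcription of the proof of Lemma \ref{bijiao}.
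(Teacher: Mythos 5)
The key gap is the admissibility of the competitors you use to test the obstacle inequality for $u$. In Lemma \ref{bijiao}, $v=\tfrac{u+w}{2}$ and $v_k=u+T_k(w-u)$ are admissible precisely because \emph{both} $u\geq\psi$ and $w\geq\psi$ hold there: on $\{w-u\leq-k\}$, for instance, $v_k=u-k\geq w\geq\psi$, while on $\{|w-u|<k\}$, $v_k=w\geq\psi$. In the present lemma $w$ solves an unconstrained equation, so $w\geq\psi$ is not among the hypotheses, and both $\tfrac{u+w}{2}$ and $u+T_k(w-u)$ can then drop below $\psi$ on the set $\{w<\psi\}$. Without admissibility you cannot derive $\int a(x,Du)\cdot D[T_k(u-w)]\,dx\leq 0$, and the argument stalls at exactly the step where the obstacle constraint enters; your proposal never addresses this.

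The missing ingredient is a comparison-principle step recovering $w\geq\psi$: since $-\operatorname{div}(a(x,Dw))=-\operatorname{div}(a(x,D\psi))$ weakly and $w=u\geq\psi$ on $\partial B_R(x_0)$, testing the weak formulation with $(\psi-w)_+\in W_0^{1,G}(B_R(x_0))$ and invoking the monotonicity of Lemma \ref{adaog} forces $D(\psi-w)_+\equiv 0$, hence $(\psi-w)_+\equiv 0$, i.e.\ $w\geq\psi$ a.e.\ in $B_R(x_0)$. Only after this can your scaling-and-truncation scheme run as written; the clean consequence of the two testings is then
\begin{equation*}
\int_{B_1}[a(x,Du)-a(x,Dw)]\cdot D[T_k(u-w)]\,dx\leq k\int_{B_1}|\operatorname{div}(a(x,D\psi))|\,dx,
\end{equation*}
with no residual term $\int[a(x,D\psi)-a(x,Du)]\cdot D(u-w)\,dx$ as in your sketch; such a term would, after Young's inequality, generate an uncontrolled $G(|Du|)$-contribution, so you should keep the $a(x,D\psi)\cdot D(u-w)$ piece in its integrated-by-parts form $\int\operatorname{div}(a(x,D\psi))\,T_k(u-w)\,dx$ throughout. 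An equally standard alternative to establishing $w\geq\psi$ is the Lewy--Stampacchia inequality $0\leq-\operatorname{div}(a(x,Du))\leq(-\operatorname{div}(a(x,D\psi)))_+$, which realizes $u$ as the solution of an equation with $L^1$ data bounded by $|\operatorname{div}(a(x,D\psi))|$ and reduces the lemma to the PDE-to-PDE comparison of Lemma \ref{fbjg}. The paper only cites \cite{xiong1} for this lemma's proof, so I cannot check your argument against a written one, but one of these two ingredients must appear, and your proposal supplies neither.
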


\begin{lemma}\label{fbjg}
Assume that conditions  \eqref{a(x)1}-\eqref{a(x)3} are fulfilled, let  $f,g \in L^{1}(B_R(x_0))\cap (W^{1,G}(B_R(x_0)))'$ and
$u,w \in W^{1,G}(B_R(x_0))$ with $u-w \in W_{0}^{1,G}(B_R(x_0))$ be weak solutions of
\begin{equation}\label{fhg}
\left\{\begin{array}{r@{\ \ }c@{\ \ }ll}
-\operatorname{div}\left(  a(x, Du)\right) =f & \mbox{on}\ \ B_R(x_0)\,, \\[0.05cm]
-\operatorname{div}\left(  a(x, Dw)\right) =g & \mbox{on}\ \ B_R(x_0)\,. \\[0.05cm]
\end{array}\right.
\end{equation}
Then the following comparison estimates hold:
\begin{equation}
\fint_{B_R(x_0)}|Du-Dw|dx\leqslant c\left(R\fint_{B_R(x_0)}(|f|+|g|+1)dx \right)^{\frac{1}{i_g}},
\end{equation}
where $c=c(n,i_g,s_g,v,L)>0$.
\end{lemma}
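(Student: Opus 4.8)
The plan is to test the two equations in \eqref{fhg} against the difference $u-w\in W_0^{1,G}(B_R(x_0))$ and its truncations, exactly mimicking the strategy used in Lemma~\ref{bijiao}. After the standard rescaling $\widehat u(x)=u(Rx+x_0)/R$, $\widehat w(x)=w(Rx+x_0)/R$, $\widehat f(x)=Rf(Rx+x_0)$, $\widehat g(x)=Rg(Rx+x_0)$ (which preserves \eqref{a(x)1}--\eqref{a(x)3}), it suffices to treat $x_0=0$, $R=1$, and then split into the two cases $\int_{B_1}(|f|+|g|+1)\,dx\leqslant 1$ and $>1$. In the second case the dilation $\overline u=A^{-1}u$, $\overline a(x,\eta)=A^{-i_g}a(x,A\eta)$ with $A=\bigl(\int_{B_1}(|f|+|g|+1)\,dx\bigr)^{1/i_g}>1$ reduces matters to the first case, at the price of the factor $A^{1/i_g}$ in the final bound; the new data $\overline f=A^{-i_g}f$, $\overline g=A^{-i_g}g$ then satisfy $\int_{B_1}(|\overline f|+|\overline g|+1)\,dx\leqslant 1$.

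So assume $\int_{B_1}(|f|+|g|+1)\,dx\leqslant 1$. Subtracting the two weak formulations in \eqref{fhg} with test function $u-w$ and using the monotonicity from Lemma~\ref{adaog},
\[
c\int_{B_1}G(|Du-Dw|)\,dx\leqslant \int_{B_1}\bigl[a(x,Du)-a(x,Dw)\bigr]\cdot(Du-Dw)\,dx=\int_{B_1}(f-g)(u-w)\,dx.
\]
When $1+i_g>n$, Sobolev embedding gives $u-w\in L^\infty(B_1)$ with $\|u-w\|_{L^\infty(B_1)}\leqslant c\|Du-Dw\|_{L^{1+i_g}(B_1)}$, and since $|Du-Dw|^{1+i_g}\leqslant c[G(|Du-Dw|)+1]$ by Lemma~\ref{ag}, the right-hand side is bounded by $c\|Du-Dw\|_{L^{1+i_g}(B_1)}\int_{B_1}(|f|+|g|)\,dx+c$, and absorbing yields $\int_{B_1}|Du-Dw|\,dx\leqslant c$. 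When $1+i_g\leqslant n$ one instead tests with the truncations $T_k(u-w)$: this is legitimate because $u-w\in W_0^{1,G}(B_1)$ and gives, for every $k\geqslant 1$,
\[
\int_{B_1}\bigl[a(x,Du)-a(x,Dw)\bigr]\cdot D T_k(u-w)\,dx=\int_{B_1}(f-g)\,T_k(u-w)\,dx\leqslant k\int_{B_1}(|f|+|g|)\,dx\leqslant k,
\]
so with $D_k:=\{|u-w|\leqslant k\}$ one gets $\int_{D_k}|Du-Dw|^{1+i_g}\,dx\leqslant c\,k$, and Lemma~\ref{qianqi} (applied to $h=u-w$ with $M=c$) delivers $\int_{B_1}|Du-Dw|\,dx\leqslant c$. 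Undoing the dilation and rescaling then gives the claimed estimate.

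The main obstacle, as in Lemma~\ref{bijiao}, is the borderline case $1+i_g\leqslant n$: one cannot test directly with $u-w$ since it need not be bounded, so the truncation argument together with the nonlinear Gehring-type lemma (Lemma~\ref{qianqi}) is essential, and one must check that the hypotheses of Lemma~\ref{qianqi} hold with a constant $M$ independent of $k$. The remaining steps — the subadditivity of $G$ (Remark~\ref{remark1}), the equivalences $|t|^{1+i_g}\lesssim G(|t|)+1$ from Lemma~\ref{ag}, and the scaling bookkeeping in Case~2 — are routine, so I would present them briefly and refer to the corresponding computations in \cite{xiong1} and in Lemma~\ref{bijiao} above.
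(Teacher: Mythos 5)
Your plan correctly mirrors the proof of Lemma~\ref{bijiao}: test with $u-w$ and its truncations, use the monotonicity from Lemma~\ref{adaog}, use the Sobolev embedding when $1+i_g>n$, and use the truncation argument together with Lemma~\ref{qianqi} when $1+i_g\leqslant n$, then rescale. This is indeed the route the authors intend; the paper does not actually write out a proof of Lemma~\ref{fbjg}, stating only that it follows by a minor modification of the corresponding result in \cite{xiong1}, whose structure is the same as that of Lemma~\ref{bijiao} here.

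There is, however, a bookkeeping slip in your Case~2 normalization. If you set $A=\bigl(\int_{B_1}(|f|+|g|+1)\,dx\bigr)^{1/i_g}$, then for the dilated data $\overline f=A^{-i_g}f$, $\overline g=A^{-i_g}g$ one has
\[
\int_{B_1}\bigl(|\overline f|+|\overline g|+1\bigr)\,dx \;=\; A^{-i_g}\int_{B_1}(|f|+|g|)\,dx + |B_1| \;=\; 1 + |B_1|\bigl(1-A^{-i_g}\bigr) \;>\; 1,
\]
so the dilation does \emph{not} land you back in Case~1 as you claim; moreover, since $\int_{B_1}(|f|+|g|+1)\,dx\geqslant|B_1|$, your Case~1 may even be empty in low dimensions. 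The correct fix is to drop the $+1$ from both the case split and the normalization: split according to whether $\int_{B_1}(|f|+|g|)\,dx\leqslant 1$ or $>1$, and in the second case take $A=\bigl(\int_{B_1}(|f|+|g|)\,dx\bigr)^{1/i_g}$, which gives $\int_{B_1}(|\overline f|+|\overline g|)\,dx=1$. The $+1$ in the statement is only there to absorb the fixed constant produced by Case~1: since $\bigl(\int_{B_1}(|f|+|g|+1)\,dx\bigr)^{1/i_g}\geqslant|B_1|^{1/i_g}$, the bound $\int_{B_1}|Du-Dw|\,dx\leqslant c$ obtained in Case~1 already has the required form. Finally, undoing the dilation costs a factor $A$ (not $A^{1/i_g}$), since $Du-Dw=A\,(D\overline u-D\overline w)$; it is this factor $A=\bigl(\int_{B_1}(|f|+|g|)\,dx\bigr)^{1/i_g}$ that produces the exponent $1/i_g$ on the data in the final estimate. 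With these two corrections your argument is complete and matches the intended proof.
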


Next we  state an excess decay estimate for a homogeneous comparison problem.
\begin{lemma}(see \cite{b13}, lemma 4.1)\label{lemma1.6}
If $ v\in W_{loc}^{1,G}(\Omega)$ is a local weak solution of
\begin{equation}
\operatorname{div}\left(  a(Dv)\right) =0 \ \ \ \ \ \ in\ \ \ \Omega
\end{equation}
under the assumption  \eqref{a(x)1} and \eqref{a(x)3}, then there exist constants $\beta \in (0,1)$ and $C=C(n,i_g,s_g,v,L)$ such that
\begin{equation}
\fint_{B_\rho(x_0)} \vert Dv-(Dv)_{B_\rho(x_0)}\vert \operatorname{d}\!\xi\leq C\left( \frac{\rho}{R}\right) ^\beta \fint_{B_R(x_0)} \vert Dv-(Dv)_{B_R(x_0)}\vert \operatorname{d}\!\xi,
\end{equation}
where $0<\rho\leqslant R, \ B_{2R}(x_0)\subset \Omega.$
\end{lemma}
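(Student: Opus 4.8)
Since the statement is quoted verbatim from \cite{b13} (Lemma~4.1), the plan is essentially to recall the standard $C^{1,\beta}$-regularity argument for autonomous equations of Orlicz growth. After rescaling so that $x_0=0$, and observing that the inequality is trivial whenever $\rho>R/4$, it is enough to produce a geometric decay $E(\tau^{k+1}R)\le\sigma\,E(\tau^{k}R)$ along a dyadic sequence of radii, where $E(r):=\fint_{B_r(0)}|Dv-(Dv)_{B_r(0)}|\,d\xi$ and $\tau,\sigma\in(0,1)$ are constants to be fixed depending only on the structural data; iterating this relation and interpolating between dyadic radii then yields the assertion with $\beta:=\log_{1/\tau}(1/\sigma)$.

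The first step I would carry out is to invoke Lieberman's theory \cite{l1} (see also \cite{de1}) for equations with structure \eqref{a(x)1} and \eqref{a(x)3}: the solution $v$ is of class $C^{1,\beta_1}_{\mathrm{loc}}(\Omega)$ for some $\beta_1\in(0,1)$, so in particular $Dv$ is continuous and $\sup_{B_R(0)}|Dv|<\infty$. Since $a=a(\eta)$ does not depend on $x$, a difference-quotient argument then shows that, for each coordinate direction $s$, the function $w:=D_s v$ is a weak solution of the linearised equation $\operatorname{div}\!\bigl(D_\eta a(Dv)\,Dw\bigr)=0$, whose coefficient matrix satisfies, by \eqref{a(x)1},
\[
v\,\tfrac{g(|Dv|)}{|Dv|}\,|\xi|^{2}\le D_\eta a(Dv(x))\,\xi\cdot\xi,\qquad |D_\eta a(Dv(x))|\le L\,\tfrac{g(|Dv|)}{|Dv|},
\]
on the open set $\{Dv\neq0\}$; hence its ellipticity ratio is bounded by $L/v$ independently of the size of $|Dv|$, and by Lemma~\ref{ag} the scalar factor $g(t)/t$ changes only by a constant depending on $i_g,s_g$ over any interval $[\lambda/K,K\lambda]$, which is what will make normalisation by a constant possible.

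The decay of $E(r)$ is then obtained by the classical dichotomy at each scale $r\le R$. In the non-degenerate regime $|(Dv)_{B_r(0)}|\ge A\,E(r)$, for a large fixed $A$, one first uses the oscillation estimate for $w$ (bootstrapped from the continuity of $Dv$) to show that $|Dv|$ stays comparable to the positive constant $|(Dv)_{B_r(0)}|$ on $B_{r/2}(0)$; there the linearised equation is uniformly elliptic with constants depending only on $n,i_g,s_g,v,L$, and the De~Giorgi--Nash oscillation estimate applied to $w-(Dw)_{B_{r/2}(0)}$ gives $E(\tau r)\le c\,\tau^{\beta_0}E(r)$ with the De~Giorgi exponent $\beta_0$. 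In the degenerate regime $|(Dv)_{B_r(0)}|<A\,E(r)$, an intrinsic Caccioppoli inequality on the super-level sets of $|Dv|$ together with a De~Giorgi iteration forces a fixed-proportion decay $\sup_{B_{\tau r}(0)}|Dv|\le\sigma_0\sup_{B_r(0)}|Dv|$ with $\sigma_0<1$, which again controls $E(\tau r)$. Choosing $\tau$ small and $A$ large so that the two alternatives combine, one obtains $E(\tau r)\le\sigma\,E(r)$ uniformly in $r$, and Lemma~\ref{diedai}-type iteration finishes the argument. The genuine obstacle, as always for degenerate problems, is the interplay between the set $\{Dv=0\}$ and the uniformly elliptic regime — i.e.\ proving the dichotomy lemma itself — and this is exactly where the Orlicz growth enters essentially, through the comparability of $g(t)/t$ from Lemma~\ref{ag} and the intrinsic Caccioppoli inequality; once the dichotomy is in place, the remaining iteration is routine and produces the stated constants.
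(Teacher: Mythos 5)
The paper does not actually prove Lemma~\ref{lemma1.6}; the lemma header ``(see \cite{b13}, lemma 4.1)'' is the whole justification, and the body contains no proof. So there is no ``paper proof'' to compare your argument against; what you have written is essentially a reconstruction of the argument that lives in the cited reference \cite{b13} and the works it rests on (\cite{l1}, \cite{de1}). Your outline --- regularity of $v$ via Lieberman, differentiating the autonomous equation to get a linear equation for $w=D_s v$ with coefficient matrix $D_\eta a(Dv)$, and a non-degenerate/degenerate dichotomy feeding a De~Giorgi iteration --- is indeed the correct shape of that proof, and the two roles you assign to Lemma~\ref{ag} (comparability of $g(t)/t$ across a fixed multiplicative window, and bounded ellipticity ratio $L/v$) are exactly the structural facts that make the Orlicz case work like the $p$-Laplacian case.

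One step in your sketch is genuinely loose, though. In the degenerate alternative you conclude $\sup_{B_{\tau r}}|Dv|\le\sigma_0\sup_{B_r}|Dv|$ and then claim this ``again controls $E(\tau r)$.'' The chain $E(\tau r)\le 2\sup_{B_{\tau r}}|Dv|\le 2\sigma_0\sup_{B_r}|Dv|$ is fine, but it only produces decay of the excess once you know $\sup_{B_r}|Dv|\le C\,E(r)$ in the degenerate regime --- and this bound is not automatic from $|(Dv)_{B_r}|<A\,E(r)$ alone, since the oscillation of $Dv$ on $B_r$ is not a priori comparable to $E(r)$. The standard repair (in DiBenedetto--Manfredi, Duzaar--Mingione, and also in \cite{b13}) is either to iterate a \emph{composite} excess of the form $E(r)+\kappa\sup_{B_r}|Dv|$, or to run the iteration at a scale where the non-degenerate alternative has already been applied once so that $\sup_{B_r}|Dv|$ is comparable to the mean; either way one needs an explicit switching mechanism between the two alternatives, not just the observation that one of the two holds at each scale. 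Since the paper defers entirely to \cite{b13}, this gap does not affect the paper, but if you intend to present the argument as self-contained you should make the coupling of the two alternatives precise.
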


We want to obtain a similar excess decay estimate, but with error terms, for solutions to  \eqref{u0}. And our approach is to transfer the excess decay estimate from Lemma \ref{lemma1.6} to solutions of an obstacle problem by employing a multistep comparison argument.
\begin{lemma}\label{zongjie}
Assume that conditions  \eqref{a(x)1}-\eqref{a(x)3}  are fulfilled, let  $B_{2R}(x_0) \subset \Omega,   \psi \in W^{1,G}(B_R(x_0))\cap W^{2,1}(B_R(x_0))$, $\frac{g(|D\psi|)}{|D\psi|}|D^{2}\psi| \in L^1(B_R(x_0)).$ Let $u\in W^{1,1}(B_R(x_0))$ with $u\geqslant \psi$ a.e. be a limit of approximating solutions for $OP(\psi; u)$ with measure data $\mu\in \mathcal{M}_{b}(B_R(x_0))$ (in the sense of Definition \ref{opdy}).
Then there exists $\beta \in (0,1)$ such that
\begin{eqnarray*}
&&\fint_{B_{\rho}(x_0)}|Du-(Du)_{B_{\rho}(x_0)}|dx\leqslant  c \left( \frac{\rho}{R}\right) ^{\beta}\fint_{B_R(x_0)}|Du-(Du)_{B_R(x_0)}|dx \\ &+& c\left(\frac{R}{\rho} \right) ^n \left[ \left[\frac{|\mu|(\overline{B_R(x_0)})}{R^{n-1}} \right] ^{\frac{1}{i_g}}+\left(R\fint_{B_R(x_0)}\left(\frac{g(|D\psi|)}{|D\psi|}|D^{2}\psi|+1\right)dx\right)^{\frac{1}{i_g}}\right] \\
&+&c\left(\frac{R}{\rho} \right) ^{n}  \omega(R)^{\frac{1}{1+s_g}}\left\lbrace \fint_{B_{R(x_0)}}|Du|dx+ G^{-1}\left[\fint_{B_{R(x_0)}}[G(|D\psi|)+G(|\psi|)]dx \right]\right\rbrace,
\end{eqnarray*}
where $0<\rho\leqslant R, \ c=c(n,i_g,s_g,v,L)$ and  $\beta$ is as in Lemma \ref{lemma1.6}.
\end{lemma}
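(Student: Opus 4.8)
The plan is to run a multistep comparison chain, starting from the solution $u$ of the obstacle problem $OP(\psi;u)$ with measure data and landing on the homogeneous frozen-coefficient equation to which Lemma \ref{lemma1.6} applies. First I would fix a ball $B_R(x_0)$ and introduce three auxiliary functions on $B_R(x_0)$: (i) $w_1\in u+W_0^{1,G}(B_R(x_0))$, $w_1\geq\psi$, the solution of the homogeneous obstacle problem \eqref{w-v}; (ii) $w_2\in w_1+W_0^{1,G}(B_R(x_0))$, the weak solution of the equation $-\operatorname{div}(a(x,Dw_2))=-\operatorname{div}(a(x,D\psi))$, obtained by applying Lemma \ref{zabj} with $w_1$ in place of $u$ (the hypotheses on $\psi$ give $\operatorname{div}(a(x,D\psi))\in L^1$, controlled by $\frac{g(|D\psi|)}{|D\psi|}|D^2\psi|+1$); and (iii) $w_3$, the solution of the frozen homogeneous equation $\operatorname{div}(\overline{a}_{B_R(x_0)}(Dw_3))=0$ with $w_3=w_2$ on $\partial B_R(x_0)$, to which Lemma \ref{diyi}-type freezing applies.

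Next I would collect the comparison estimates on $B_R(x_0)$:
\begin{itemize}
\item $\fint_{B_R(x_0)}|Du-Dw_1|\,dx\leq c\big[\frac{|\mu|(\overline{B_R(x_0)})}{R^{n-1}}\big]^{1/i_g}$ from Corollary \ref{coro};
\item $\fint_{B_R(x_0)}|Dw_1-Dw_2|\,dx\leq c\big(R\fint_{B_R(x_0)}(\frac{g(|D\psi|)}{|D\psi|}|D^2\psi|+1)\,dx\big)^{1/i_g}$ from Lemma \ref{zabj};
\item $\fint_{B_R(x_0)}|Dw_2-Dw_3|\,dx\leq c\,\omega(R)^{1/(1+s_g)}\big\{\fint_{B_R(x_0)}|Dw_2|\,dx+G^{-1}[\fint_{B_R(x_0)}(G(|D\psi|)+G(|\psi|))\,dx]\big\}$ from (the proof of) Lemma \ref{diyi}, applied with $w_2$ in the role of "$u$''.
\end{itemize}
Wait — Lemma \ref{diyi} is stated for solutions of a homogeneous obstacle problem, and here $w_2$ solves an equation, not an obstacle problem; but the only structural input used in the proof of Lemma \ref{diyi} is the Caccioppoli inequality and the reverse Hölder inequality of Lemma \ref{nibu}, both of which hold for $w_2$ as well, so the same estimate goes through with $\fint|Dw_2|$ (plus the $\psi$-terms) on the right. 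Since $\fint_{B_R(x_0)}|Dw_2|\,dx\leq \fint_{B_R(x_0)}|Du|\,dx+\fint_{B_R(x_0)}|Du-Dw_1|\,dx+\fint_{B_R(x_0)}|Dw_1-Dw_2|\,dx$, the first two comparison estimates absorb the $|Dw_2|$ term into quantities already on the right-hand side of the claim, so in the end it can be replaced by $\fint_{B_R(x_0)}|Du|\,dx$.

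Finally I would chain everything: apply Lemma \ref{lemma1.6} to $w_3$ on $B_\rho(x_0)$ versus $B_R(x_0)$ to get $\fint_{B_\rho}|Dw_3-(Dw_3)_{B_\rho}|\leq c(\rho/R)^\beta\fint_{B_R}|Dw_3-(Dw_3)_{B_R}|$, then use the elementary bound \eqref{1.8} ($\fint_{B_\rho}|F-(F)_{B_\rho}|\leq 2\fint_{B_\rho}|F-c|$ for any constant $c$) together with the inclusion $B_\rho(x_0)\subset B_R(x_0)$, which for a function $H$ on $B_R$ yields $\fint_{B_\rho}|H|\,dx\leq (R/\rho)^n\fint_{B_R}|H|\,dx$. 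Writing $Du=(Du-Dw_3)+Dw_3$ and applying the triangle inequality twice — once to pass from $Du$ to $Dw_3$ on $B_\rho$ (picking up an error $\lesssim (R/\rho)^n\fint_{B_R}|Du-Dw_3|$), once on $B_R$ inside the excess — and then bounding $\fint_{B_R}|Du-Dw_3|\leq \fint_{B_R}|Du-Dw_1|+\fint_{B_R}|Dw_1-Dw_2|+\fint_{B_R}|Dw_2-Dw_3|$ gives exactly the three error groups in the statement; the leading term $c(\rho/R)^\beta\fint_{B_R}|Dw_3-(Dw_3)_{B_R}|$ is converted to $c(\rho/R)^\beta\fint_{B_R}|Du-(Du)_{B_R}|$ by the same triangle-inequality argument (this absorption costs only an additional $\fint_{B_R}|Du-Dw_3|$ term, which is already present and has a bigger $(R/\rho)^n$ prefactor, hence is harmless).

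The main obstacle is bookkeeping rather than any single hard estimate: one must be careful that each comparison lemma is invoked on the correct ball (some are stated on $B_R$ with data on $B_{2R}$, which is why the hypothesis $B_{2R}(x_0)\subset\Omega$ is needed) and that every intermediate $\fint|Dw_i|$ appearing on a right-hand side gets reabsorbed into the three target quantities — the $\mu$-term, the $D^2\psi$-term, and the $\omega(R)$-Orlicz term — without generating an uncontrolled dependence. A secondary subtlety is that the excess (oscillation) quantity, unlike an $L^1$ norm of $Du$ itself, requires the constant-subtraction trick \eqref{1.8} to transfer cleanly between $Du$ and $Dw_3$ and between nested balls; keeping the direction of the triangle inequalities straight there is the only place where sign/ordering errors are easy to make.
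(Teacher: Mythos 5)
Your chain of comparison functions differs from the paper's in both length and, crucially, in the \emph{order} of operations, and that reordering introduces a genuine gap. The paper uses four auxiliary functions: $w_1$ solves the homogeneous obstacle problem for $a(x,\cdot)$, $w_2$ solves the homogeneous obstacle problem for the \emph{frozen} operator $\overline{a}_{B_{R/2}}$, $w_3$ solves the frozen equation with right-hand side $-\operatorname{div}(\overline{a}_{B_{R/2}}(D\psi))$, and $w_4$ solves the frozen homogeneous equation. That is: data removal, \emph{then} freezing, \emph{then} obstacle removal, \emph{then} right-hand-side removal. You instead remove the obstacle at step two (so your $w_2$ solves $-\operatorname{div}(a(x,Dw_2))=-\operatorname{div}(a(x,D\psi))$, an equation with merely $L^1$ right-hand side) and only then attempt to freeze.

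The problem is precisely where you wave your hands: you assert that the reverse H\"older inequality of Lemma~\ref{nibu} ``holds for $w_2$ as well.'' It does not. Lemma~\ref{nibu} is stated for solutions of the homogeneous variational inequality \eqref{vw1}, and higher integrability of the gradient is in general \emph{false} for equations with only $L^1$ (or measure) right-hand side --- that failure is the whole reason this paper works with approximable solutions and delicate truncation arguments in the first place. The freezing estimate of Lemma~\ref{diyi} hinges on bounding
$\fint\theta(a,B_R)\,g(|D w|)\,|Dw-D\tilde w|\,dx$
via H\"older's inequality with exponents $\gamma',\gamma$, and this requires $\fint G(|Dw|)^{\gamma}$ to be finite with a quantitative bound. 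For your $w_2$ there is no such bound. (There is a second, more cosmetic defect in the same step: since $w_2$ has a nonzero right-hand side and your $w_3$ does not, testing the two equations against $w_2-w_3$ produces the extra term $\int\operatorname{div}(a(x,D\psi))(w_2-w_3)$, which requires the truncation machinery of Lemma~\ref{bijiao}/\ref{zabj}/\ref{fbjg} rather than the simple Young-inequality absorption of Lemma~\ref{diyi}; so your ``single combined step'' is really two steps with two different mechanisms. But this is fixable; the missing higher integrability is not.)

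The paper's order of operations is exactly what sidesteps this: the freezing step (Lemma~\ref{diyi}) is applied to $w_1$, which solves a \emph{homogeneous} obstacle problem and therefore genuinely satisfies Lemma~\ref{nibu}. Only after the coefficients are frozen does the paper pass to an equation with $L^1$ data, and by then no higher integrability is ever needed again --- Lemmas~\ref{zabj} and~\ref{fbjg} are pure $L^1$-comparison estimates. A secondary point you elide is that the paper shrinks the radius from $B_R$ to $B_{R/2}$ when introducing $w_2,w_3,w_4$, precisely so that the reverse H\"older inequality (which needs room, via $B_{3R/2}$) has space to operate inside $B_{2R}(x_0)\subset\Omega$; your version keeps everything on $B_R$ and would run out of room even if the integrability issue were not there.

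Your final bookkeeping (using the inclusion $B_\rho\subset B_R$ to pick up $(R/\rho)^n$, the constant-subtraction trick \eqref{1.8}, and the triangle inequalities to transfer the excess from $Du$ to the last comparison function and back) is sound and matches the paper's. The defect is confined to the freezing step; if you reorder to freeze while still inside the homogeneous obstacle world, the rest of your argument goes through essentially as the paper's does.
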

\begin{remark}
Since we obtained all comparison estimates for regularized problems on $L^{1}-$level, our results also hold in measure data problems.
\end{remark}
\begin{proof}
Without loss of generality we may assume that $x_0=0$ and that $w_1, w_2, w_3 \in W^{1,G}(B_R)$ satisfy separately
\begin{equation*}
\left\{\begin{array}{r@{\ \ }c@{\ \ }ll}
&\int_{B_R}&  a(x, Dw_1)\cdot D(v-w_1)dx \geqslant0,\ \  \mbox{for} \ \ \forall \ v \in w_1+W_{0}^{1,G}(B_R) \ \mbox{with} \ v\geqslant \psi \  a.e. \ \mbox{on}\ \ B_R \,, \\[0.05cm]
&w_1&\geqslant \psi, w_1\geqslant0   \ \ \ \ \ \ a.e. \ \mbox{on}\ \ B_R\,, \\[0.05cm]
&w_1&=u \ \ \ \ \ \ \ \ \ \ \ \ \ \ \ \ \ \mbox{on}\ \ \partial B_R \,,
\end{array}\right.
\end{equation*}

\begin{equation*}
\left\{\begin{array}{r@{\ \ }c@{\ \ }ll}
&\int_{B_\frac{R}{2}}&  \overline{a}_{B_{\frac{R}{2}}}(Dw_2)\cdot D(v-w_2)dx \geqslant0,\ \  \mbox{for} \ \ \forall \ v \in w_1+W_{0}^{1,G}(B_{\frac{R}{2}}) \ \mbox{with} \ v\geqslant \psi \  a.e. \ \mbox{on}\ \ B_{\frac{R}{2}} \,, \\[0.05cm]
&w_2&\geqslant \psi \ \ \ \ \ \ \ \ \ \ \ \ \ \ \ \ \ a.e. \ \mbox{on}\ \ B_{\frac{R}{2}}\,, \\[0.05cm]
&w_2&=w_1 \ \ \ \ \ \ \ \ \ \ \ \ \ \ \ \ \ \mbox{on}\ \ \partial B_{\frac{R}{2}} \,,
\end{array}\right.
\end{equation*}

\begin{equation*}
\left\{\begin{array}{r@{\ \ }c@{\ \ }ll}
-\operatorname{div}\left(  \overline{a}_{B_{\frac{R}{2}}}(Dw_3)\right)&=&-\operatorname{div}\left( \overline{a}_{B_{\frac{R}{2}}}(D\psi)\right) \ \ \ \mbox{on}\ \ B_{\frac{R}{2}} \,, \\[0.05cm]
w_3&=&w_1  \ \  \ \ \ \ \ \ \ \ \ \ \ \ \ \ \ \ \ \ \ \ \ \  \ \ \mbox{on}\ \ \partial B_{\frac{R}{2}} \,, \\[0.05cm]
\end{array}\right.
\end{equation*}

\begin{equation*}
\left\{\begin{array}{r@{\ \ }c@{\ \ }ll}
-\operatorname{div}\left(  \overline{a}_{B_{\frac{R}{2}}}(Dw_4)\right)&=&0  \ \ \ \mbox{on}\ \ B_{\frac{R}{2}} \,, \\[0.05cm]
w_4&=&w_1  \ \ \ \mbox{on}\ \ \partial B_{\frac{R}{2}} \,. \\[0.05cm]
\end{array}\right.
\end{equation*}
In order to remove the inhomogeneity, we make use of  Corollary \ref{coro} to get  the comparison estimate
\begin{equation}\label{ll2}
\fint_{B_R} |Du-Dw_1| \operatorname{d}\!x \leqslant c\left[\frac{|\mu|(\overline{B_R})}{R^{n-1}} \right] ^{\frac{1}{i_g}}.
\end{equation}
Then we  use  Lemma \ref{diyi} to obtain
\begin{eqnarray}\label{ll5}
\fint_{B_{\frac{R}{2}}}|Dw_1-Dw_2|dx &\leqslant&  c \nonumber \omega(R)^{\frac{1}{1+s_g}}\left\lbrace \fint_{B_{R}}|Dw_1|dx+G^{-1}\left[\fint_{B_{R}}[G(|D\psi|)+G(|\psi|)]dx \right] \right\rbrace \\  \nonumber
&\leqslant& c \omega(R)^{\frac{1}{1+s_g}}\left\lbrace \fint_{B_{R}}|Du|dx+\left[\frac{|\mu|(\overline{B_R})}{R^{n-1}} \right] ^{\frac{1}{i_g}}\right\rbrace \\
&+&  c \omega(R)^{\frac{1}{1+s_g}}\left\lbrace G^{-1}\left[\fint_{B_{R}}[G(|D\psi|)+G(|\psi|)]dx \right] \right\rbrace.
\end{eqnarray}
Next, in order to transition to an obstacle-free problem, we  apply Lemma \ref{zabj} to get
\begin{equation}\label{l3}
\fint_{B_{\frac{R}{2}}}|Dw_2-Dw_3|dx\leqslant c\left(R\fint_{B_{\frac{R}{2}}}(|\operatorname{div}\left(  \overline{a}_{B_{\frac{R}{2}}}(D\psi)\right)|+1)dx\right)^{\frac{1}{i_g}}.
\end{equation}
Now we reduce to a homogeneous equation. An application of  Lemma \ref{fbjg} with $f=-\operatorname{div}\left(  \overline{a}_{B_{\frac{R}{2}}}(D\psi)\right)$ and $g=0$ implies that
\begin{equation}\label{l4}
\fint_{B_{\frac{R}{2}}}|Dw_3-Dw_4|dx\leqslant c\left(R\fint_{B_{\frac{R}{2}}}(|\operatorname{div}\left(  \overline{a}_{B_{\frac{R}{2}}}(D\psi)\right)|+1)dx\right)^{\frac{1}{i_g}}.
\end{equation}
Thanks to Lemma \ref{lemma1.6},  there exists $\beta \in (0,1)$ such that
\begin{eqnarray}\label{ll1}
 \nonumber  \fint_{B_\rho} \vert Dw_4-(Dw_4)_{B_\rho}\vert dx &\leqslant & c\left( \frac{\rho}{R}\right) ^\beta \fint_{B_{\frac{R}{2}}} \vert Dw_4-(Dw_4)_{B_{\frac{R}{2}}}\vert dx \\
&\leqslant & c\left( \frac{\rho}{R}\right) ^\beta \fint_{B_{\frac{R}{2}}}|Dw_4-Du| +\vert Du-(Du)_{B_R}\vert dx.
\end{eqnarray}
Finally we together with  \eqref{ll2} \eqref{ll5}   \eqref{l3} \eqref{l4} and  \eqref{ll1} ,  we infer
\begin{eqnarray*}
\fint_{B_{\rho}}|Du-(Du)_{B_{\rho}}|dx &\leqslant& \fint_{B_{\rho}}|Du-(Dw_4)_{B_{\rho}}|dx \\
&\leqslant& \fint_{B_{\rho}}(|Du-Dw_1|+|Dw_1-Dw_2|+|Dw_2-Dw_3| \\
&+&|Dw_3-Dw_4|+\vert Dw_4-(Dw_4)_{B_\rho}\vert )dx  \\
&\leqslant& c\left(\frac{R}{\rho} \right) ^n \left[ \left[\frac{|\mu|(\overline{B_{\frac{R}{2}})}}{R^{n-1}} \right] ^{\frac{1}{i_g}}+\left(R\fint_{B_{\frac{R}{2}}}(|\operatorname{div}\left(  \overline{a}_{B_{\frac{R}{2}}}(D\psi)\right)|+1)dx\right)^{\frac{1}{i_g}}\right] \\
&+&c\left(\frac{R}{\rho} \right) ^{n}  \omega(R)^{\frac{1}{1+s_g}}\left\lbrace \fint_{B_{R}}|Du|dx+\left[\frac{|\mu|(\overline{B_R})}{R^{n-1}} \right] ^{\frac{1}{i_g}} \right\rbrace \\
&+&c\left(\frac{R}{\rho} \right) ^{n}  \omega(R)^{\frac{1}{1+s_g}}G^{-1}\left[\fint_{B_{R}}[G(|D\psi|)+G(|\psi|)]dx \right] \\
&+& c\left( \frac{\rho}{R}\right) ^\beta \left[ \fint_{B_{\frac{R}{2}}}(|Dw_4-Du|+|Du-(Du)_{B_R}|)dx\right]  \\
&\leqslant& c \left( \frac{\rho}{R}\right) ^\beta  \fint_{B_R}|Du-(Du)_{B_R}|dx \\
&+& c\left(\frac{R}{\rho} \right) ^n \left[ \left[\frac{|\mu|(\overline{B_R})}{R^{n-1}} \right] ^{\frac{1}{i_g}}+\left(R\fint_{B_R}\left(\frac{g(|D\psi|)}{|D\psi|}|D^{2}\psi| +1\right)dx\right)^{\frac{1}{i_g}}\right] \\
&+&c\left(\frac{R}{\rho} \right) ^{n}  \omega(R)^{\frac{1}{1+s_g}}\left\lbrace \fint_{B_{R}}|Du|dx+ G^{-1}\left[\fint_{B_{R}}[G(|D\psi|)+G(|\psi|)]dx \right] \right\rbrace,
\end{eqnarray*}
where we used the fact $\omega \leqslant2L$ and the condition  \eqref{a(x)1} in the last step.
\end{proof}

\section{The proof of gradient estimates }\label{section5}
This section is devoted to obtain pointwise and oscillation estimates for the gradients of solutions  by  the sharp maximal function estimates.   We start with  a pointwise estimate of fractional maximal  operator by precise iteration methods.
\begin{proof}[Proof of Theorem \ref{Th1}]
\textup{\textbf{Proof of \eqref{1.11}}}
We  define
\begin{equation}
B_i:=B\left( x,\frac{R}{H^i}\right )=B(x,R_i),\ \ \mbox{for}\ \ \ i=0,1,2,...,\ \ \nonumber
\end{equation}
\begin{equation}
A_i:=\fint_{B_i}|Du-(Du)_{B_i}|dx,\ \ k_i:=|(Du)_{B_i}-S|,\ \ \ S\in \mathbb{R}^n.  \nonumber
\end{equation}
We take $H=H(n,i_g,s_g,v,L)>1$ large enough to have
\begin{equation}
c\left(\frac{1}{H} \right) ^\beta\leq \frac{1}{4}, \nonumber
\end{equation}
with $\beta$  as in lemma \ref{lemma1.6} and we apply lemma \ref{zongjie}  to obtain
\begin{eqnarray*}
\fint_{B_{i+1}}|Du-(Du)_{B_{i+1}}|\operatorname{d}\! \xi &\leqslant& \frac{1}{4}\fint_{B_{i}}|Du-(Du)_{B_{i}}|\operatorname{d}\!\xi+cH^{n} \left\lbrace  \left[\frac{|\mu|(\overline{B_i})}{R_{i}^{n-1}} \right] ^{\frac{1}{i_g}}+\left[\frac{D\Psi(B_i)}{R_{i}^{n-1}} \right] ^{\frac{1}{i_g}}\right\rbrace  \\
&+&cH^{n}  \omega(R_i)^{\frac{1}{1+s_g}}\left\lbrace \fint_{B_{i}}|Du|d\xi+ G^{-1}\left[\fint_{B_{i}}[G(|D\psi|)+G(|\psi|)]d\xi \right]\right\rbrace,
\end{eqnarray*}
Now we reduce the value of $R_0$-in a way depending only on $n,i_g,s_g,v,L$ and $\omega(\cdot)$- to get
\begin{equation*}
cH^{n}\omega(R_i)^{\frac{1}{1+s_g}}\leqslant cH^{n}\omega(R_0)^{\frac{1}{1+s_g}}\leqslant \frac{1}{4},
\end{equation*}
which together with  the following estimate
\begin{equation*}
\fint_{B_i}|Du|d\xi \leqslant \fint_{B_i}|Du-(Du)_{B_i}|d\xi+k_i+|S|,
\end{equation*}
we reduce that
\begin{eqnarray}\label{1.19}
\nonumber A_{i+1}&\leqslant& \frac{1}{2}A_i+c\left\lbrace \left[\frac{|\mu|(\overline{B_i})}{R_i^{n-1}} \right] ^{\frac{1}{i_g}} +\left[\frac{D\Psi(B_i)}{R_i^{n-1}} \right] ^{\frac{1}{i_g}}\right\rbrace  \\
&+&c[\omega(R_i)]^{\frac{1}{1+s_g}}\left\lbrace k_i+|S|+ G^{-1}\left[\fint_{B_{i}}[G(|D\psi|)+G(|\psi|)]d\xi \right]\right\rbrace
\end{eqnarray}
whenever $ i\geqslant0 $.  On the other hand, we calculate
\begin{eqnarray*}
|k_{i+1}-k_i|&\leq&|(Du)_{B_{i+1}}-(Du)_{B_i}| \\
&\leq& \fint_{B_{i+1}}|Du-(Du)_{B_{i}}|\operatorname{d}\!\xi\\
&\leq& H^n \fint_{B_i}|Du-(Du)_{B_{i}}|\operatorname{d}\!\xi=H^nA_i,
\end{eqnarray*}
from which we see that  for $ m\in \mathbb{N} $,
\begin{equation}\label{1.20}
k_{m+1}=\sum_{i=0}^m(k_{i+1}-k_i)+k_0\leq H^n\sum_{i=0}^mA_i+k_0.
\end{equation}
At this stage we sum up  \eqref{1.19}  over $ i \in \left\lbrace  0,...,m-1\right\rbrace$, which allow us to infer the inequality
\begin{eqnarray*}
\sum_{i=1}^mA_i&\leq& \frac{1}{2}\sum_{i=0}^{m-1}A_i+c\sum_{i=0}^{m-1}\left\lbrace \left[\frac{|\mu|(\overline{B_i})}{R_i^{n-1}} \right] ^{\frac{1}{i_g}} +\left[\frac{D\Psi(B_i)}{R_i^{n-1}} \right] ^{\frac{1}{i_g}}\right\rbrace  \\
&+&c\sum_{i=0}^{m-1}[\omega(R_i)]^{\frac{1}{1+s_g}}\left\lbrace k_i+|S|+ G^{-1}\left[\fint_{B_{i}}[G(|D\psi|)+G(|\psi|)]d\xi \right] \right\rbrace.
\end{eqnarray*}
Consequently,
\begin{eqnarray*}
\sum_{i=1}^mA_i&\leq& A_0+2c\sum_{i=0}^{m-1}\left\lbrace \left[\frac{|\mu|(\overline{B_i})}{R_i^{n-1}} \right] ^{\frac{1}{i_g}} +\left[\frac{D\Psi(B_i)}{R_i^{n-1}} \right] ^{\frac{1}{i_g}}\right\rbrace  \\
&+&c\sum_{i=0}^{m-1}[\omega(R_i)]^{\frac{1}{1+s_g}}\left\lbrace k_i+|S|+ G^{-1}\left[\fint_{B_{i}}[G(|D\psi|)+G(|\psi|)]d\xi \right] \right\rbrace.
\end{eqnarray*}
For every integer $m\geqslant1$ we employ \eqref{1.20} to gain
\begin{eqnarray}\label{2.3}
\nonumber k_{m+1}&\leq& cA_0+ck_0+c\sum_{i=0}^{m-1}\left\lbrace \left[\frac{|\mu|(\overline{B_i})}{R_i^{n-1}} \right] ^{\frac{1}{i_g}} +\left[\frac{D\Psi(B_i)}{R_i^{n-1}} \right] ^{\frac{1}{i_g}}\right\rbrace  \\
&+&c\sum_{i=0}^{m-1}[\omega(R_i)]^{\frac{1}{1+s_g}}\left\lbrace k_i+|S|+ G^{-1}\left[\fint_{B_{i}}[G(|D\psi|)+G(|\psi|)]d\xi \right] \right\rbrace.
\end{eqnarray}
we take into account the definition of $A_0$ to get
\begin{eqnarray}\label{a0guji}
\nonumber k_{m+1}&\leq& c\fint_{B_R}|Du-(Du)_{B_R}|+|Du-S|\operatorname{d}\!\xi+c\sum_{i=0}^{m-1}\left\lbrace \left[\frac{|\mu|(\overline{B_i})}{R_i^{n-1}} \right] ^{\frac{1}{i_g}} +\left[\frac{D\Psi(B_i)}{R_i^{n-1}} \right] ^{\frac{1}{i_g}}\right\rbrace  \\
&+&c\sum_{i=0}^{m-1}[\omega(R_i)]^{\frac{1}{1+s_g}}\left\lbrace k_i+|S|+ G^{-1}\left[\fint_{B_{i}}[G(|D\psi|)+G(|\psi|)]d\xi \right] \right\rbrace
\end{eqnarray}
for every $m\geqslant0.$
In the previous inequality we choose $S=0$ and multiply both sides by $R_{m+1}^{1-\alpha}$, taking into account that $\alpha\in[0,1]$ and $R_{m+1}\leq R_i$ for $0\leqslant i \leqslant m+1$, we get
\begin{eqnarray*}
R_{m+1}^{1-\alpha}k_{m+1}&\leq& cR^{1-\alpha} \fint_{B_R}|Du|\operatorname{d}\!\xi+c\sum_{i=0}^{m-1}R_i^{1-\alpha}\left\lbrace \left[\frac{|\mu|(\overline{B_i})}{R_i^{n-1}} \right] ^{\frac{1}{i_g}} +\left[\frac{D\Psi(B_i)}{R_i^{n-1}} \right] ^{\frac{1}{i_g}}\right\rbrace  \\
&+&c\sum_{i=0}^{m-1}R_i^{1-\alpha}[\omega(R_i)]^{\frac{1}{1+s_g}}\left\lbrace k_i+ G^{-1}\left[\fint_{B_{i}}[G(|D\psi|)+G(|\psi|)]d\xi \right] \right\rbrace
\end{eqnarray*}
and therefore
\begin{eqnarray}\label{1.22}
\nonumber R_{m+1}^{1-\alpha}k_{m+1}&\leq& cR^{1-\alpha} \fint_{B_R}|Du|\operatorname{d}\!\xi+c\sum_{i=0}^{m}\left\lbrace \left[\frac{|\mu|(\overline{B_i})}{R_i^{n-1-i_g(1-\alpha)}} \right] ^{\frac{1}{i_g}}+\left[\frac{D\Psi(B_i)}{R_i^{n-1-i_g(1-\alpha)}} \right] ^{\frac{1}{i_g}}\right\rbrace \\
&+&c\sum_{i=0}^{m}R_i^{1-\alpha}[\omega(R_i)]^{\frac{1}{1+s_g}}\left\lbrace k_i+ G^{-1}\left[\fint_{B_{i}}[G(|D\psi|)+G(|\psi|)]d\xi \right] \right\rbrace.
\end{eqnarray}
Now we employ the definition of Wolff potential to estimate the last term on the right-hand side in \eqref{1.22} and we find
\begin{eqnarray}\label{1.23}
\nonumber&&\sum_{i=0}^{\infty}\left[\frac{|\mu|(\overline{B_i})}{R_i^{n-1-i_g(1-\alpha)}} \right] ^{\frac{1}{i_g}} \\
&\leq& \frac{c}{\log2}\int_R^{2R}\left[\frac{|\mu|(\overline{B_\rho})}{\rho^{n-1-i_g(1-\alpha)}} \right]  ^{\frac{1}{i_g}}\frac{\operatorname{d}\!\rho}{\rho}\nonumber
+\sum_{i=0}^{\infty}\frac{c}{\log H}\int_{R_{i+1}}^{R_i}\left[\frac{|\mu|(\overline{B_\rho})}{\rho^{n-1-i_g(1-\alpha)}} \right]  ^{\frac{1}{i_g}}\frac{\operatorname{d}\!\rho}{\rho}\\
&\leq& cW_{1-\alpha+\frac{\alpha}{i_g+1},i_g+1}^{\mu}(x,2R),
\end{eqnarray}
in the last step, we apply $|\mu|(B_{2R})<+\infty$ by assumption, then $|\mu|(\partial B_{\rho})>0$ can hold at most for countably many radii $\rho \in (R,2R)$.
Similarly, we have
\begin{equation*}
\sum_{i=0}^{\infty}\left[\frac{D\Psi(B_i)}{R_i^{n-1-i_g(1-\alpha)}} \right] ^{\frac{1}{i_g}} \leqslant cW_{1-\alpha+\frac{\alpha}{i_g+1},i_g+1}^{[\psi]}(x,2R),
\end{equation*}
\begin{equation*}
\sum_{i=0}^{\infty}\left[\omega(R_i)\right] ^{\frac{1}{1+s_g}}\leqslant c\int_{0}^{2R}[\omega(\rho)]^{\frac{1}{1+s_g}}\frac{d\rho}{\rho},
\end{equation*}
\begin{eqnarray*}
\sum_{i=0}^{\infty}R_{i}^{1-\alpha}\left[\omega(R_i)\right] ^{\frac{1}{1+s_g}}G^{-1}\left[\fint_{B_{i}}[G(|D\psi|)+G(|\psi|)]d\xi \right] \\
\leqslant c\int_{0}^{2R}[\omega(\rho)]^{\frac{1}{1+s_g}}G^{-1}\left[\fint_{B_{\rho}}[G(|D\psi|)+G(|\psi|)]d\xi \right]\frac{d\rho}{\rho^{\alpha}}.
\end{eqnarray*}
Consequently,
\begin{equation}\label{2.7}
R_{m+1}^{1-\alpha}k_{m+1}\leqslant cM+c\sum_{i=0}^{m}R_{i}^{1-\alpha}k_{i}\left[\omega(R_i)\right] ^{\frac{1}{1+s_g}},
\end{equation}
where
\begin{eqnarray*}
M:&=&c\left[ R^{1-\alpha}\fint_{B_R}|Du|d\xi +W_{1-\alpha+\frac{\alpha}{i_g+1},i_g+1}^{\mu}(x,2R)+W_{1-\alpha+\frac{\alpha}{i_g+1},i_g+1}^{[\psi]}(x,2R)\right]  \\
&+&c\int_{0}^{2R}[\omega(\rho)]^{\frac{1}{1+s_g}}G^{-1}\left[\fint_{B_{\rho}}[G(|D\psi|)+G(|\psi|)]d\xi \right]\frac{d\rho}{\rho^{\alpha}}.
\end{eqnarray*}
We now prove by induction that
\begin{equation}\label{mguji}
R_{m+1}^{1-\alpha}k_{m+1}\leqslant (c+c^{*})M,
\end{equation}
holds for every $m\geqslant 0$, some positive constants $c,c^{*}>1$.

Firstly, the case $m=0$ of \eqref{2.7} is trivial. Then we assume that $R_{i}^{1-\alpha}k_{i}\leqslant (c+c^{*})M$ for $i\leqslant m$ and prove it for $m+1$. Because of \eqref{dytj}, we further reduce the value of $R_0$ to get
\begin{equation*}
\int_{0}^{2R}[\omega(\rho)]^{\frac{1}{1+s_g}}\frac{d \rho}{\rho}\leqslant \int_{0}^{2R_0}[\omega(\rho)]^{\frac{1}{1+s_g}}\frac{d \rho}{\rho}\leqslant \frac{1}{2(c+c^{*})c}.
\end{equation*}
Then taking \eqref{2.7} into account we obtain
\begin{equation*}
R_{m+1}^{1-\alpha}k_{m+1}\leqslant cM+c\sum_{i=0}^{m}(c+c^{*})M\left[\omega(R_i)\right] ^{\frac{1}{1+s_g}}\leqslant (c+c^{*})M,
\end{equation*}
therefore \eqref{mguji} follows for every integer $m\geqslant0$.
Now  we define
\begin{equation*}
C_m:=R_m^{1-\alpha}A_m=R_m^{1-\alpha}\fint_{B_m}|Du-(Du)_{B_m}|d\xi
\end{equation*}
\begin{equation*}
h_m:=\fint_{B_m}|Du|d\xi.
\end{equation*}
Then it's  easy to obtain
\begin{eqnarray*}
R_m^{1-\alpha}h_m&=&R_m^{1-\alpha}\fint_{B_m}|Du|d\xi \\
&\leq& R_m^{1-\alpha}\fint_{B_m}|Du-(Du)_{B_m}|+|(Du)_{B_m}|d\xi  \\
&=&R_m^{1-\alpha}k_m+C_m \\
&\leq& CM+C_m
\end{eqnarray*}
with $M$ as in \eqref{2.7}, and so we just need to look for  a bound on $C_m$.
Applying \eqref{1.23}  and keeping in mind the definition of $M$ in\eqref{2.7}, we  gain
\begin{eqnarray*}
\left[\frac{|\mu|(\overline{B_i})}{R_i^{n-1}} \right] ^{\frac{1}{i_g}}
&\leq& CR_i^{\alpha-1}W_{1-\alpha+\frac{\alpha}{i_g+1},i_g+1}^{\mu}(x,2R) \\
&\leq&  CR_i^{\alpha-1}M.
\end{eqnarray*}
Similarly, we have
\begin{equation*}
\left[\frac{D\Psi(B_i)}{R_{i}^{n-1}} \right] ^{\frac{1}{i_g}}\leqslant cR_{i}^{\alpha-1}M,
\end{equation*}
\begin{equation*}
\left[\omega(R_i) \right] ^{\frac{1}{1+s_g}} G^{-1}\left[\fint_{B_{i}}[G(|D\psi|)+G(|\psi|)]d\xi \right] \leqslant cR_{i}^{\alpha-1}M.
\end{equation*}
Therefore, referring to  \eqref{1.19}  we have
\begin{equation*}
A_{m+1}\leq \frac{1}{2}A_m+cR_m^{\alpha-1}M +ck_m.
\end{equation*}
In turn, we make use of  $\eqref{mguji}$ to obtain
\begin{equation*}
A_{m+1}\leq \frac{1}{2}A_m+cR_m^{\alpha-1}M.
\end{equation*}
then multiply both sides by $R_{m+1}^{1-\alpha}$, we get
\begin{eqnarray}\label{1.25}
C_{m+1}&\leq& \frac{1}{2}\left( \frac{R_{m+1}}{R_m}\right) ^{1-\alpha}C_m \nonumber
+C\left( \frac{R_{m+1}}{R_m}\right) ^{1-\alpha}M \\
&\leq& \frac{1}{2}C_m+C_1M.
\end{eqnarray}
Now we shall prove by induction that
\begin{equation}\label{2.9}
C_m\leq 2C_1M
\end{equation}
holds whenever $m\geqslant0$. When $k=0$, we have
\begin{eqnarray*}
C_0&=&R^{1-\alpha}\fint_{B_R}|Du-(Du)_{B_R}|\operatorname{d}\!\xi \\
&\leq& 2R^{1-\alpha}\fint_{B_R}|Du|\operatorname{d}\!\xi\\
&\leq& 2M.
\end{eqnarray*}
Then we assume  \eqref{2.9} holds for  $k=m-1$  , then using \eqref{1.25}  we gain
\begin{equation*}
C_m\leq 2C_1M\ \ \ \ \ \mbox{for}\ \ \ m\geqslant0.
\end{equation*}
Now we consider $r\leqslant R$ and determine the integer $i\geqslant0$ such that $R_{i+1}\leqslant r\leqslant R_i$, then we have
\begin{eqnarray*}
r^{1-\alpha}\fint_{B_r}|Du|\operatorname{d}\!\xi
&\leq& \left(\frac{R_i}{R_{i+1}} \right) ^nR_i^{1-\alpha}\fint_{B_i}|Du|\operatorname{d}\!\xi \\
&\leq& CH^nR_i^{1-\alpha}h_i \\
&\leq& CM.
\end{eqnarray*}
Recalling the definition of $M$ and  the restricted maximal operator   we in turn obtain
\begin{eqnarray*}
M_{1-\alpha,R}(|Du|)(x)&\leqslant &c\left[ R^{1-\alpha}\fint_{B_R}|Du|d\xi+W_{1-\alpha+\frac{\alpha}{i_g+1},i_g+1}^{\mu}(x,2R)+W_{1-\alpha+\frac{\alpha}{i_g+1},i_g+1}^{[\psi]}(x,2R)\right]  \\
&+&c\int_{0}^{2R}[\omega(\rho)]^{\frac{1}{1+s_g}}G^{-1}\left[\fint_{B_{\rho}}[G(|D\psi|)+G(|\psi|)]d\xi \right]\frac{d\rho}{\rho^{\alpha}}.
\end{eqnarray*}
Moreover, because of
\begin{equation*}
M_{\alpha,R}^{\#}(u)(x)\leqslant M_{1-\alpha,R}(|Du|)(x),
\end{equation*}
then we have
\begin{eqnarray*}
&&M_{\alpha,R}^{\#}(u)(x)+M_{1-\alpha,R}(Du)(x)\\
&\leqslant& c\left[ R^{1-\alpha}\fint_{B_R}|Du|d\xi+W_{1-\alpha+\frac{\alpha}{i_g+1},i_g+1}^{\mu}(x,2R)+W_{1-\alpha+\frac{\alpha}{i_g+1},i_g+1}^{[\psi]}(x,2R)\right]  \\
&+&c\int_{0}^{2R}[\omega(\rho)]^{\frac{1}{1+s_g}}G^{-1}\left[\fint_{B_{\rho}}[G(|D\psi|)+G(|\psi|)]d\xi \right]\frac{d\rho}{\rho^{\alpha}}.
\end{eqnarray*}
\textup{\textbf{Proof of \eqref{1.122}}}
We define
\begin{equation}\label{a}
\widehat{A}_i:=R_i^{-\alpha}\fint_{B_i}|Du-(Du)_{B_i}|\operatorname{d}\!\xi
\end{equation}
Using lemma \ref{zongjie} (multiply both sides by $R_{i+1}^{-\alpha}$) we obtain
\begin{eqnarray*}
\widehat{A}_{i+1}
&\leqslant &c\left( \frac{R_{i+1}}{R_{i}}\right) ^{\beta-\alpha}\widehat{A}_i+c \left( \frac{R_{i}}{R_{i+1}}\right) ^{n+\alpha}\left\lbrace \left[ \frac{|\mu|(\overline{B_i})}{R_i^{n-1+\alpha i_g}}\right] ^{\frac{1}{i_g}}+ \left[ \frac{D\Psi(B_i)}{R_i^{n-1+\alpha i_g}}\right] ^{\frac{1}{i_g}}\right\rbrace \\
&+&c\left( \frac{R_{i}}{R_{i+1}}\right) ^{n+\alpha}\frac{1}{R_i^{\alpha}} \omega(R_i)^{\frac{1}{1+s_g}}\left\lbrace \fint_{B_{i}}|Du|d\xi+ G^{-1}\left[\fint_{B_{i}}[G(|D\psi|)+G(|\psi|)]d\xi \right] \right\rbrace.
\end{eqnarray*}
We choose $H=H(n,i_a,s_a,\widehat{\alpha},\beta)>1$ large enough to gain
\begin{equation*}
c\left( \frac{R_{i+1}}{R_{i}}\right) ^{\beta-\alpha}
=c\left( \frac{1}{H}\right)^{\beta-\alpha}
\leqslant c\left( \frac{1}{H}\right)^{\beta-\widehat{\alpha} }
\leqslant\frac{1}{2}
\end{equation*}
and
\begin{equation*}
 \frac{|\mu|(\overline{B_i})}{R_i^{n-1+\alpha i_g}}\leqslant H^{n-1+\alpha i_g}\frac{|\mu|(B_{i-1})}{R_{i-1}^{n-1+\alpha i_g}}.
\end{equation*}
From assumption \eqref{wtiaojian}, we have
$$\frac{[\omega(R_i)]^{\frac{1}{1+s_g}}}{R_{i}^{\alpha}}\leqslant \frac{[\omega(R_i)]^{\frac{1}{1+s_g}}}{R_{i}^{\widehat{\alpha}}}\leqslant c_0,$$
then because of  the definition of restricted maximal operator, we conclude that
\begin{eqnarray*}
\widehat{A}_{i+1}&\leqslant& \frac{1}{2}\widehat{A}_{i}+c\left\lbrace \left[ M_{1-\alpha i_g,R}(\mu)(x)\right] ^{\frac{1}{i_g}}+\left[ \overline{M}_{1-\alpha i_g,R}(\psi)(x)\right] ^{\frac{1}{i_g}}+\fint_{B_{i}}|Du|d\xi \right\rbrace \\
&+&c\frac{1}{R_i^{\alpha}} \omega(R_i)^{\frac{1}{1+s_g}}\left\lbrace  G^{-1}\left[\fint_{B_{i}}[G(|D\psi|)+G(|\psi|)]d\xi\right] \right\rbrace
\end{eqnarray*}
for every $i\geqslant0$.
Moreover, from the case $\alpha=1$ of inequality \eqref{1.11}, we have
\begin{eqnarray}\label{mwanguji}
\nonumber \fint_{B_i}|Du|d\xi &\leqslant& c\left[ \fint_{B_R}|Du|d\xi+W_{\frac{1}{i_g+1},i_g+1}^{\mu}(x,2R)+W_{ \frac{1}{i_g+1},i_g+1}^{[\psi]}(x,2R)\right]  \\ \nonumber
&+&c\int_{0}^{2R}[\omega(\rho)]^{\frac{1}{1+s_g}}G^{-1}\left[\fint_{B_{\rho}}[G(|D\psi|)+G(|\psi|)]d\xi \right]\frac{d\rho}{\rho} \\
&:=&cM^{*}.
\end{eqnarray}
So combined with the two previous inequalities, we obtain
\begin{eqnarray*}
\widehat{A}_{i+1}&\leqslant& \frac{1}{2}\widehat{A}_{i}+c\left\lbrace \left[ M_{1-\alpha i_g,R}(\mu)(x)\right] ^{\frac{1}{i_g}}+\left[ \overline{M}_{1-\alpha i_g,R}(\psi)(x)\right] ^{\frac{1}{i_g}}+M^{*}\right\rbrace \\
&+&c\frac{1}{R_i^{\alpha}} \omega(R_i)^{\frac{1}{1+s_g}}\left\lbrace  G^{-1}\left[\fint_{B_{i}}[G(|D\psi|)+G(|\psi|)]d\xi \right] \right\rbrace.
\end{eqnarray*}
Iterating the previous relation, we conclude
\begin{eqnarray*}
 \widehat{A}_{i}&\leqslant& 2^{-i} \widehat{A}_{0}+c\sum_{j=0}^{i-1}2^{-j} \left\lbrace \left[ M_{1-\alpha i_g,R}(\mu)(x)\right] ^{\frac{1}{i_g}}+\left[ \overline{M}_{1-\alpha i_g,R}(\psi)(x)\right] ^{\frac{1}{i_g}}+M^{*}\right\rbrace  \\
 &+& c\sum_{j=0}^{i-1}2^{-j}  \frac{1}{R_j^{\alpha}} \omega(R_j)^{\frac{1}{1+s_g}}\left\lbrace   G^{-1}\left[\fint_{B_{j}}[G(|D\psi|)+G(|\psi|)]d\xi \right] \right\rbrace
\end{eqnarray*}
holds for every $i\geqslant1$.
Then similar to \eqref{1.23}, we reduce
\begin{eqnarray*}
&&\sum_{j=0}^{i-1}2^{-j}  \frac{1}{R_j^{\alpha}} \omega(R_j)^{\frac{1}{1+s_g}}\left\lbrace   G^{-1}\left[\fint_{B_{j}}[G(|D\psi|)+G(|\psi|)]d\xi \right] \right\rbrace  \\
&\leqslant &c\int_{0}^{2R}[\omega(\rho)]^{\frac{1}{1+s_g}}G^{-1}\left[\fint_{B_{\rho}}[G(|D\psi|)+G(|\psi|)]d\xi \right]\frac{d\rho}{\rho^{1+\alpha}}.
\end{eqnarray*}
Then recalling  \eqref{a}  and  \eqref{1.8} one easily deduces that
\begin{eqnarray*}
 \sup_{i\geqslant0}\widehat{A}_{i}&\leqslant& c\left\lbrace R^{-\alpha}\fint_{B_R}|Du|\operatorname{d}\!\xi+ \left[  M_{1-\alpha i_g,R}(\mu)(x)\right] ^{\frac{1}{i_g}}+\left[  \overline{M}_{1-\alpha i_g,R}(\psi)(x)\right] ^{\frac{1}{i_g}}\right\rbrace  \\
 &+& c\left\lbrace M^{*}+\int_{0}^{2R}[\omega(\rho)]^{\frac{1}{1+s_g}}G^{-1}\left[\fint_{B_{\rho}}[G(|D\psi|)+G(|\psi|)]d\xi \right]\frac{d\rho}{\rho^{1+\alpha}}
\right\rbrace .
\end{eqnarray*}
For every $ \rho\in (0,R]$, let $ i\in \mathbb{N}$ be  such that $ R_{i+1}<\rho\leqslant R_{i} $,  then we gain
\begin{eqnarray*}
\rho^{-\alpha}\fint_{B_{\rho}}\vert Du-(Du)_{B_{\rho}}\vert\operatorname{d}\!\xi &\leqslant & c\frac{R_{i}^n}{\rho^n}R_{i+1}^{-\alpha}\fint_{B_i}\vert Du-(Du)_{B_i}\vert\operatorname{d}\!\xi  \\
&\leqslant & c\sup_{i\geqslant0}\widehat{A}_{i},
\end{eqnarray*}
from which we obtain that
\begin{eqnarray*}
M_{\alpha,R}^{\#}(Du)(x)&\leqslant& c\left\lbrace R^{-\alpha}\fint_{B_R}|Du|\operatorname{d}\!\xi+ \left[  M_{1-\alpha i_g,R}(\mu)(x)\right] ^{\frac{1}{i_g}}+\left[  \overline{M}_{1-\alpha i_g,R}(\psi)(x)\right] ^{\frac{1}{i_g}}\right\rbrace  \\
&+&c\left\lbrace W_{\frac{1}{i_g+1},i_g+1}^{\mu}(x,2R)+W_{ \frac{1}{i_g+1},i_g+1}^{[\psi]}(x,2R)\right\rbrace  \\
&+&c\int_{0}^{2R}[\omega(\rho)]^{\frac{1}{1+s_g}}G^{-1}\left[\fint_{B_{\rho}}[G(|D\psi|)+G(|\psi|)]d\xi \right]\frac{d\rho}{\rho^{1+\alpha}}.
\end{eqnarray*}
 This completes the proof of Theorem \ref{Th1}.
\end{proof}
\begin{proof}[Proof of Theorem \ref{Th2}]
At first we give the proof of the estimate \eqref{du}.

We choose $S=0$ and make use  of \eqref{a0guji}, we conclude
\begin{eqnarray*}
\nonumber k_{m+1}&\leq& c\fint_{B_R(x_0)}|Du-(Du)_{B_R(x_0)}|+|Du|\operatorname{d}\! x+c\sum_{i=0}^{m-1}\left\lbrace \left[\frac{|\mu|(\overline{B_{R_i}}(x_0))}{R_i^{n-1}} \right] ^{\frac{1}{i_g}} +\left[\frac{D\Psi(B_{R_i}(x_0))}{R_i^{n-1}} \right] ^{\frac{1}{i_g}}\right\rbrace  \\
&+&c\sum_{i=0}^{m-1}[\omega(B_i)]^{\frac{1}{1+s_g}}\left\lbrace \fint_{B_{R_i}(x_0)}|Du|dx+ G^{-1}\left[\fint_{B_{R_i}(x_0)}[G(|D\psi|)+G(|\psi|)]dx \right]\right\rbrace.
\end{eqnarray*}
On the other hand, we observe
\begin{equation*}
\sum_{i=0}^{+\infty}\left[\frac{|\mu|(\overline{B_{R_i}}(x_0))}{R_i^{n-1}} \right] ^{\frac{1}{i_g}} \leqslant cW_{\frac{1}{i_g+1},i_g+1}^{\mu}(x_0,2R),
\end{equation*}
\begin{equation*}
\sum_{i=0}^{+\infty}\left[\frac{D\Psi(B_{R_i}(x_0))}{R_i^{n-1}} \right] ^{\frac{1}{i_g}} \leqslant cW_{\frac{1}{i_g+1},i_g+1}^{[\psi]}(x_0,2R),
\end{equation*}
\begin{equation*}
\sum_{i=0}^{+\infty}\left[\omega(R_i)\right] ^{\frac{1}{1+s_g}}\leqslant c\int_{0}^{2R}[\omega(\rho)]^{\frac{1}{1+s_g}}\frac{d\rho}{\rho}\leqslant c,
\end{equation*}
\begin{eqnarray*}
\sum_{i=0}^{+\infty}\left[\omega(R_i)\right] ^{\frac{1}{1+s_g}}G^{-1}\left[\fint_{B_{R_i}(x_0)}[G(|D\psi|)+G(|\psi|)]dx \right] \\
\leqslant c\int_{0}^{2R}[\omega(\rho)]^{\frac{1}{1+s_g}}G^{-1}\left[\fint_{B_{\rho}(x_0)} [G(|D\psi|)+G(|\psi|)]dx\right]\frac{d\rho}{\rho},
\end{eqnarray*}
Coupling \eqref{mwanguji} with above estimates, it follows that
\begin{eqnarray*}
|Du(x_0)|=\lim_{m\rightarrow\infty}k_{m+1} &\leqslant& c\left[ \fint_{B_R(x_0)}|Du|dx+W_{\frac{1}{i_g+1},i_g+1}^{\mu}(x_0,2R)+W_{ \frac{1}{i_g+1},i_g+1}^{[\psi]}(x_0,2R)\right] \\
&+&c\int_{0}^{2R}[\omega(\rho)]^{\frac{1}{1+s_g}}G^{-1}\left[\fint_{B_{\rho}(x_0)}[G(|D\psi|)+G(|\psi|)]dx \right]\frac{d\rho}{\rho}.
\end{eqnarray*}

Next we prove the estimate \eqref{du-du}.
For every $ x,y\in B_\frac{R}{4}(x_0)$,  we define
\begin{equation*}
r_i:=\frac{r}{H^i}, \ \ \ \ r\leqslant \frac{R}{2}, \ \ \ \ k_i=|(Du)_{B_{r_i}(x)}-S|, \ \ \ \ \overline{k_i}=|(Du)_{B_{r_i}(y)}-S|.
\end{equation*}
Taking advantage  of   \eqref{a0guji} again, we obtain
\begin{eqnarray*}
k_{m+1}&\leqslant & c\fint_{B_r(x)}(\vert Du-(Du)_{B_r(x)}\vert+\vert Du-S\vert)\operatorname{d}\!\xi \\
&+&c r^{\alpha}\sum_{i=0}^{m-1}\left\lbrace \left[ \frac{|\mu|(\overline{B_{r_i}}(x))}{r_i^{n-1+\alpha i_g}}\right] ^\frac{1}{i_g}+\left[ \frac{D\Psi(B_{r_i}(x))}{r_i^{n-1+\alpha i_g}}\right] ^\frac{1}{i_g}\right\rbrace \\
&+&cr^{\alpha}\sum_{i=0}^{m-1}\frac{1}{r_{i}^{\alpha}}[\omega(r_i)]^{\frac{1}{1+s_g}}\left\lbrace \fint_{B_{r_i}(x)}|Du|d\xi+ G^{-1}\left[\fint_{B_{r_i}(x)}[G(|D\psi|)+G(|\psi|)]d\xi \right] \right\rbrace.
\end{eqnarray*}
Moreover,
\begin{equation*}
\sum_{i=0}^{+\infty}\left[\frac{|\mu|(\overline{B_{r_i}(x)})}{r_i^{n-1+\alpha i_g}} \right] ^{\frac{1}{i_g}}
\leqslant cW_{\frac{1-\alpha i_{g}}{1+i_g},i_g+1}^{\mu}(x,2r),
\end{equation*}
\begin{equation*}
\sum_{i=0}^{+\infty}\left[\frac{D\Psi(B_{r_i}(x))}{r_i^{n-1+\alpha i_g}} \right] ^{\frac{1}{i_g}} \leqslant cW_{\frac{1-\alpha i_g}{1+i_g},i_g+1}^{[\psi]}(x,2r),
\end{equation*}
\begin{equation*}
\sum_{i=0}^{+\infty}\frac{1}{r_{i}^{\alpha}}\left[\omega(r_i)\right] ^{\frac{1}{1+s_g}}\leqslant c\int_{0}^{2r}[\omega(\rho)]^{\frac{1}{1+s_g}}\frac{d\rho}{\rho^{1+\alpha}}\leqslant c,
\end{equation*}
\begin{eqnarray*}
\sum_{i=0}^{+\infty}\frac{1}{r_{i}^{\alpha}}\left[\omega(R_i)\right] ^{\frac{1}{1+s_g}}G^{-1}\left[\fint_{B_{r_i}(x_0)}[G(|D\psi|)+G(|\psi|)]d\xi \right] \\
\leqslant c\int_{0}^{2r}[\omega(\rho)]^{\frac{1}{1+s_g}}G^{-1}\left[\fint_{B_{\rho}(x)}[G(|D\psi|)+G(|\psi|)]d\xi \right]\frac{d\rho}{\rho^{1+\alpha}}.
\end{eqnarray*}
Combining \eqref{mwanguji} with the previous estimates, so we have
\begin{eqnarray*}
k_{m+1} &\leqslant& c \fint_{B_r(x)}(\vert Du-(Du)_{B_r(x)}\vert+\vert Du-S\vert)\operatorname{d}\!\xi \\
&+&c r^{\alpha}\left[  W^{\mu}_{-\alpha+\frac{1+\alpha}{1+i_g},i_g+1}(x,R)+ W^{[\psi]}_{-\alpha+\frac{1+\alpha}{1+i_g},i_g+1}(x,R)\right] \\
&+&c r^{\alpha} \left\lbrace \fint_{B_r(x)}|Du|d\xi+\int_{0}^{R}[\omega(\rho)]^{\frac{1}{1+s_g}}G^{-1}\left[\fint_{B_{\rho}(x)}[G(|D\psi|)+G(|\psi|)]d\xi \right]\frac{d\rho}{\rho^{1+\alpha}} \right\rbrace.
\end{eqnarray*}
If $x$ is the Lebesgue's point of $Du$, then let $m\rightarrow\infty$, we derive
\begin{eqnarray*}
|Du(x)-S|&=&\lim_{m\rightarrow\infty}k_{m+1}  \\
&\leqslant &c \fint_{B_r(x)}(\vert Du-(Du)_{B_r(x)}\vert+\vert Du-S\vert)\operatorname{d}\!\xi \\
&+&c r^{\alpha}\left[  W^{\mu}_{-\alpha+\frac{1+\alpha}{1+i_g},i_g+1}(x,R)+ W^{[\psi]}_{-\alpha+\frac{1+\alpha}{1+i_g},i_g+1}(x,R)\right] \\
&+&c r^{\alpha} \left\lbrace \fint_{B_r(x)}|Du|d\xi+\int_{0}^{R}[\omega(\rho)]^{\frac{1}{1+s_g}}G^{-1}\left[\fint_{B_{\rho}(x)}[G(|D\psi|)+G(|\psi|)]d\xi \right]\frac{d\rho}{\rho^{1+\alpha}} \right\rbrace.
\end{eqnarray*}
If $y$ is the Lebesgue's point of $Du$, we have a similar result.
 Then coupling with the previous two estimates tells us
\begin{eqnarray*}
&& |Du(x)-Du(y)|\\
 &\leqslant &c \fint_{B_r(x)}(\vert Du-(Du)_{B_r(x)}\vert+\vert Du-S\vert)\operatorname{d}\!\xi \\
&+&c r^{\alpha}\left[  W^{\mu}_{-\alpha+\frac{1+\alpha}{1+i_g},i_g+1}(x,R)+ W^{[\psi]}_{-\alpha+\frac{1+\alpha}{1+i_g},i_g+1}(x,R)\right] \\
&+&c r^{\alpha} \left\lbrace \fint_{B_r(x)}|Du|d\xi+\int_{0}^{R}[\omega(\rho)]^{\frac{1}{1+s_g}}G^{-1}\left[\fint_{B_{\rho}(x)}[G(|D\psi|)+G(|\psi|)]d\xi \right]\frac{d\rho}{\rho^{1+\alpha}} \right\rbrace \\
&+&c \fint_{B_r(y)}(\vert Du-(Du)_{B_r(y)}\vert+\vert Du-S\vert)\operatorname{d}\!\xi \\
&+&c r^{\alpha}\left[  W^{\mu}_{-\alpha+\frac{1+\alpha}{1+i_g},i_g+1}(y,R)+ W^{[\psi]}_{-\alpha+\frac{1+\alpha}{1+i_g},i_g+1}(y,R)\right] \\
&+&c r^{\alpha} \left\lbrace \fint_{B_r(y)}|Du|d\xi+\int_{0}^{R}[\omega(\rho)]^{\frac{1}{1+s_g}}G^{-1}\left[\fint_{B_{\rho}(y)}[G(|D\psi|)+G(|\psi|)]d\xi \right]\frac{d\rho}{\rho^{1+\alpha}} \right\rbrace.
\end{eqnarray*}
We now choose
\begin{equation*}
S:=(Du)_{B_{3r}(x)}, \ \ \ r:=\dfrac{|x-y|}{2},
\end{equation*}
it's easy to see that  $ B_{r}(y)\subseteq B_{3r}(x) $ and therefore
\begin{eqnarray*}
&&\fint_{B_r(x)}(\vert Du-(Du)_{B_r(x)}\vert+\vert Du-S\vert)\operatorname{d}\!\xi+
\fint_{B_{r}(y)}(\vert Du-(Du)_{B_{r}(y)}\vert+\vert Du-S\vert)\operatorname{d}\!\xi  \\
&\leqslant &c(n)\fint_{B_{3r}(x)}\vert Du-(Du)_{B_{3r}(x)}\vert\operatorname{d}\!\xi.
\end{eqnarray*}
Now notice that $ x,y\in B_\frac{R}{4}(x_0)$, so $ |x-y|\leqslant \frac{R}{2} $ and then $B_{3r}(x)\subseteq B_{\frac{3R}{4}}(x)\subseteq B_R(x_0)$.
Therefore apply \eqref{1.122} to obtain
\begin{eqnarray*}
\fint_{B_{3r}(x)}\vert Du-(Du)_{B_{3r}(x)}\vert\operatorname{d}\!\xi
&\leqslant&  c r^{\alpha}M^\#_{\alpha,\frac{3R}{4}}(Du)(x)  \\
&\leqslant&c \left( \frac{r}{R}\right) ^{\alpha}\fint_{B_{\frac{3R}{4}}(x)}\vert Du\vert\operatorname{d}\!\xi \\
&+&c r^{\alpha}\left\lbrace \left[ M_{1-\alpha i_g,\frac{3R}{4}}(\mu)(x)\right] ^{\frac{1}{i_g}}+ \left[ \overline{M}_{1-\alpha i_g,\frac{3R}{4}}(\psi)(x)\right] ^{\frac{1}{i_g}}\right\rbrace \\
&+&cr^{\alpha} \left\lbrace W_{\frac{1}{i_g+1},i_g+1}^{\mu}(x,2R)+W_{ \frac{1}{i_g+1},i_g+1}^{[\psi]}(x,2R)\right\rbrace  \\
&+&cr^{\alpha} \int_{0}^{2R}[\omega(\rho)]^{\frac{1}{1+s_g}}G^{-1}\left[\fint_{B_{\rho}(x)}[G(|D\psi|)+G(|\psi|)]d\xi \right]\frac{d\rho}{\rho^{1+\alpha}}.
\end{eqnarray*}
Moreover, due to \eqref{mwanguji}, we have
\begin{eqnarray*}
\nonumber \fint_{B_r(x)}|Du|d\xi &\leqslant& c\left[ \fint_{B_{\frac{3R}{4}}(x)}|Du|d\xi+W_{\frac{1}{i_g+1},i_g+1}^{\mu}(x,\frac{3R}{2})+W_{ \frac{1}{i_g+1},i_g+1}^{[\psi]}(x,\frac{3R}{2})\right]  \\ \nonumber
&+&c\int_{0}^{\frac{3R}{2}}[\omega(\rho)]^{\frac{1}{1+s_g}}G^{-1}\left[\fint_{B_{\rho}(x)}[G(|D\psi|)+G(|\psi|)]d\xi \right]\frac{d\rho}{\rho},
\end{eqnarray*}
\begin{eqnarray*}
\nonumber \fint_{B_r(y)}|Du|d\xi &\leqslant& c\left[ \fint_{B_{\frac{3R}{4}}(y)}|Du|d\xi+W_{\frac{1}{i_g+1},i_g+1}^{\mu}(y,\frac{3R}{2})+W_{ \frac{1}{i_g+1},i_g+1}^{[\psi]}(y,\frac{3R}{2})\right]  \\ \nonumber
&+&c\int_{0}^{\frac{3R}{2}}[\omega(\rho)]^{\frac{1}{1+s_g}}G^{-1}\left[\fint_{B_{\rho}(y)}[G(|D\psi|)+G(|\psi|)]d\xi \right]\frac{d\rho}{\rho}.
\end{eqnarray*}
Because of  the above  estimates, we derive
\begin{eqnarray*}
|Du(x)-Du(y)|&\leqslant &c\left( \frac{r}{R}\right) ^{\alpha}\fint_{B_R(x_0)}\vert Du\vert\operatorname{d}\!\xi
\\&+&c r^{\alpha}\left\lbrace \left[ M_{1-\alpha i_g,\frac{3R}{4}}(\mu)(x)\right] ^{\frac{1}{i_g}}+ \left[ \overline{M}_{1-\alpha i_g,\frac{3R}{4}}(\psi)(x)\right] ^{\frac{1}{i_g}}\right\rbrace \\
&+&c r^{\alpha}\left[  W^{\mu}_{-\alpha+\frac{1+\alpha}{1+i_g},i_g+1}(x,2R)+ W^{[\psi]}_{-\alpha+\frac{1+\alpha}{1+i_g},i_g+1}(x,2R)\right] \\
&+&c r^{\alpha}\left[  W^{\mu}_{-\alpha+\frac{1+\alpha}{1+i_g},i_g+1}(y,2R)+ W^{[\psi]}_{-\alpha+\frac{1+\alpha}{1+i_g},i_g+1}(y,2R)\right] \\
&+&c r^{\alpha}\left[ \int_{0}^{2R}[\omega(\rho)]^{\frac{1}{1+s_g}}G^{-1}\left[\fint_{B_{\rho}(x)}[G(|D\psi|)+G(|\psi|)]d\xi  \right]\frac{d\rho}{\rho^{1+\alpha}} \right] \\
&+&c r^{\alpha}\left[  \int_{0}^{2R}[\omega(\rho)]^{\frac{1}{1+s_g}}G^{-1}\left[\fint_{B_{\rho}(y)}[G(|D\psi|)+G(|\psi|)]d\xi  \right]\frac{d\rho}{\rho^{1+\alpha}}\right],
\end{eqnarray*}
where we used the fact that $W_{\frac{1}{i_g+1},i_g+1}^{\mu}(x,2R) \leqslant c W^{\mu}_{-\alpha+\frac{1+\alpha}{1+i_g},i_g+1}(x,2R)$ and for other case there are similar inequalities.

For every $ \varepsilon>0 $, we know that there exists $ 0<r\leqslant R $ such that
\begin{equation*}
M_{1-\alpha i_g,\frac{3R}{4}}(\mu)(x)\leqslant |B_1|^{-1}\frac{|\mu|(B_{\frac{3r}{4}}(x))}{\left(\frac{3r}{4} \right) ^{n-1+\alpha i_g}}+\varepsilon.
\end{equation*}

We keep in mind  the definition of Wolff potential  and we derive
\begin{eqnarray*}
\frac{|\mu|(B_{\frac{3r}{4}}(x))}{\left(\frac{3r}{4} \right) ^{n-1+\alpha i_g}}&=&
\left[\left(\frac{|\mu|(B_{\frac{3r}{4}}(x))}{\left(\frac{3r}{4} \right) ^{n-1+\alpha i_g}} \right) ^{\frac{1}{i_g}} \frac{1}{-\log(3/4)}\int_{3r/4}^r\frac{\operatorname{d}\!\rho}{\rho}\right] ^{i_g}  \\
&\leqslant& C\left[ \int_{3r/4}^r\left(\frac{|\mu|(B_{\rho}(x))}{{\rho }^{n-1+\alpha i_g}} \right) ^{\frac{1}{i_g}} \frac{\operatorname{d}\!\rho}{\rho}\right] ^{i_g}  \\
&\leqslant& C\left[W^{\mu}_{-\alpha+\frac{1+\alpha}{1+i_g},i_g+1}(x,R) \right] ^{i_g}.
\end{eqnarray*}
Likewise,
\begin{equation*}
\frac{D\Psi(B_{\frac{3r}{4}}(x))}{\left(\frac{3r}{4} \right) ^{n-1+\alpha i_g}} \leqslant C\left[W^{[\psi]}_{-\alpha+\frac{1+\alpha}{1+i_g},i_g+1}(x,R) \right] ^{i_g}.
\end{equation*}
Finally, we take in account the definition of $r$ to obtain
\begin{eqnarray*}
&&\vert Du(x)-Du(y) \vert \\
&\leq &c\fint_{B_R(x_0)}\vert Du\vert\operatorname{d}\!\xi\left(\frac{|x-y|}{R} \right) ^{\alpha}  \\
&+& c \left[  W^{\mu}_{-\alpha+\frac{1+\alpha}{1+i_g},i_g+1}(x,2R)+ W^{[\psi]}_{-\alpha+\frac{1+\alpha}{1+i_g},i_g+1}(x,2R)\right]|x-y|^{\alpha} \\
&+&c \left[  W^{\mu}_{-\alpha+\frac{1+\alpha}{1+i_g},i_g+1}(y,2R)+ W^{[\psi]}_{-\alpha+\frac{1+\alpha}{1+i_g},i_g+1}(y,2R)\right]|x-y|^{\alpha} \\
&+&c \left[ \int_{0}^{2R}[\omega(\rho)]^{\frac{1}{1+s_g}}G^{-1}\left[\fint_{B_{\rho}(x)}[G(|D\psi|)+G(|\psi|)]d\xi  \right]\frac{d\rho}{\rho^{1+\alpha}} \right]|x-y|^{\alpha} \\
&+&c \left[  \int_{0}^{2R}[\omega(\rho)]^{\frac{1}{1+s_g}}G^{-1}\left[\fint_{B_{\rho}(y)}[G(|D\psi|)+G(|\psi|)]d\xi  \right]\frac{d\rho}{\rho^{1+\alpha}}\right]|x-y|^{\alpha},
\end{eqnarray*}
Then we finish the proof of Theorem \ref{Th2}.
\end{proof}

\section*{Acknowledgments}The authors are supported by the National Natural Science Foundation of China (NNSF  Grant No.12071229 and No.11671414).The authors would like to express their gratitude to the anonymous reviewers for their constructive comments and suggestions that improved the last version of the manuscript.

\end{document}